\newcommand{\bigboxplus}{
  \mathop{
    \vphantom{\bigoplus} 
    \mathchoice
      {\vcenter{\hbox{\resizebox{\widthof{$\displaystyle\bigoplus$}}{!}{$\boxplus$}}}}
      {\vcenter{\hbox{\resizebox{\widthof{$\bigoplus$}}{!}{$\boxplus$}}}}
      {\vcenter{\hbox{\resizebox{\widthof{$\scriptstyle\oplus$}}{!}{$\boxplus$}}}}
      {\vcenter{\hbox{\resizebox{\widthof{$\scriptscriptstyle\oplus$}}{!}{$\boxplus$}}}}
  }\displaylimits 
}
\newcommand{\bigast}{
  \mathop{
    \vphantom{\bigoplus} 
    \mathchoice
      {\vcenter{\hbox{\resizebox{\widthof{$\displaystyle\bigoplus$}}{!}{$\ast$}}}}
      {\vcenter{\hbox{\resizebox{\widthof{$\bigoplus$}}{!}{$\ast$}}}}
      {\vcenter{\hbox{\resizebox{\widthof{$\scriptstyle\oplus$}}{!}{$\ast$}}}}
      {\vcenter{\hbox{\resizebox{\widthof{$\scriptscriptstyle\oplus$}}{!}{$\ast$}}}}
  }\displaylimits 
}
\theoremstyle{plain}
\newtheorem{theorem}{Theorem}[subsection]                    
\newtheorem{proposition}[theorem]{Proposition}            
\newtheorem{corollary}[theorem]{Corollary}                
\newtheorem{lemma}[theorem]{Lemma}
\theoremstyle{definition}
\newtheorem{remark}[theorem]{Remark} 
\newtheorem{definition}[theorem]{Definition}
\def\us#1_#2{\underset{#2}{#1}}
\def\os#1^#2{\overset{#2}{#1}}
\def\i<#1>{\langle {#1} \rangle}
\DeclareMathOperator{\Imag}{Im}
\newcommand{\cb}{{\mathord{\rm cb}}}
\newcommand{\red}{{\mathord{\rm red}}}
\newcommand{\too}{\longrightarrow}
\newcommand{\ol}{\overline}
\newcommand{\DT}{\Delta\bfT}
\newcommand{\dT}{\partial\bfT}
\newcommand{\act}{\curvearrowright}
\DeclareMathOperator{\lspan}{span}
\DeclareMathOperator{\ospan}{\overline{\lspan}}
\DeclareMathOperator{\Ad}{Ad}
\DeclareMathOperator{\id}{id}
\edef\csname frak\@Alph\@tempcnta\endcsname{\noexpand\mathfrak{\@Alph\@tempcnta}}
\edef\csname l\@Alph\@tempcnta\endcsname{\noexpand\mathbb{\@Alph\@tempcnta}}
\edef\csname cal\@Alph\@tempcnta\endcsname{\noexpand\mathcal{\@Alph\@tempcnta}}
\edef\csname rm\@Alph\@tempcnta\endcsname{\noexpand\mathrm{\@Alph\@tempcnta}}
\edef\csname bf\@Alph\@tempcnta\endcsname{\noexpand\mathbf{\@Alph\@tempcnta}}
\begin{document}
\title[Bass--Serre trees]{Bass--Serre trees of amalgamated free product $\rmC^*$-algebras}
\author[K.~Hasegawa]{Kei~Hasegawa}
\address{Graduate~School~of~Mathematics, Kyushu~University, Fukuoka~819-0395, Japan}
\email{k-hasegawa@math.kyushu-u.ac.jp}
%\keywords{amalgamated free product, $KK$-theory}
%\subjclass[2010]{Primary~46C05, Secondary~46L80}
\begin{abstract}
For any reduced amalgamated free product $\mathrm{C}^*$-algebra $(A,E)=(A_1, E_1) \ast_D (A_2,E_2)$,
we introduce and study a canonical ambient $\mathrm{C}^*$-algebra $\Delta\mathbf{T}(A,E)$ of $A$
which generalizes the crossed product  arising from the canonical action of an
amalgamated free product group on the compactification of the associated Bass--Serre tree.
Using an explicit identification of $\DT(A,E)$ with a Cuntz--Pimsner algebra
we prove two kinds of ``amenability'' results for $\DT(A,E)$; nuclearity and universality.
As applications of our framework, we 
provide new conceptual, and simpler proofs of several known theorems
on approximation properties, embeddability, and $KK$-theory for reduced amalgamated free product $\mathrm{C}^*$-algebras.
\end{abstract}
\maketitle

%%%%%%%%%%%%%%%%%%%%%%%%%%%%%%%
%
%          Introduction
%
%%%%%%%%%%%%%%%%%%%%%%%%%%%%%%%
\section{Introduction}\label{section_intro}\renewcommand{\thetheorem}{\Alph{theorem}}
The reduced amalgamated free product of $\rmC^*$-algebras
was introduced by Voiculescu in \cite{Voiculescu}.
Since then, various aspects of the construction,
for example, simplicity, stable rank \cite{Dykema_sr,Dykema_Haagerup_Rordam},
approximation properties \cite{Dykema,Dykema_Shly,Ricard_Xu,Ozawa_RIMS},
and $K$-theory \cite{Hasegawa_IMRN,Fima_Germain1}
have been studied.
In this paper, we are interested in a new aspect of the construction,
namely, an analogue of the boundary actions arising from the Bass--Serre trees
of amalgamated free product groups.

The Bass--Serre tree associated with an amalgamated free product $\Gamma=\Gamma_1\ast_\Lambda\Gamma_2$
is a tree $\bfT$ on which $\Gamma$ acts canonically,
and such an action is a fundamental example of group actions on trees in Bass--Serre theory \cite{Serre}.
In \cite{Bowditch}, Bowditch introduced the compactification $\DT$
for a more general class of hyperbolic graphs
and studied the induced action $\Gamma \act \DT$.
The $\Gamma$-space $\DT$ can be viewed as an analogue of Gromov boundary for hyperbolic groups;
it (or its suitable quotient) is a $\Gamma$-boundary in the sense of Furstenberg \cite{Furstenberg}
and the amenability of the action was studied by Ozawa \cite{Ozawa_TA}.

\medskip
In the present paper,
we introduce and study a unital $\rmC^*$-algebra $\DT(A,E)$
for a given reduced amalgamated free product
$(A,E)=(A_1,E_1)\ast_D(A_2,E_2)$.
The $\rmC^*$-algebra $\DT(A,E)$ is generated by
a copy of $A$ and two projections,
and satisfies that
when $(A,E)$ comes from the reduced group
$\rmC^*$-algebra $\rmC^*_\red(\Gamma)$ of $\Gamma=\Gamma_1\ast_\Lambda \Gamma_2$,
one has
\[
(\rmC^*_\red(\Gamma) \subset \DT(\rmC^*_\red(\Gamma) ,E) )\;\cong \; (\rmC^*_\red(\Gamma)\subset C(\DT)\rtimes_\red\Gamma).
\]
Our construction is inspired by the recent works \cite{Hasegawa_IMRN,Fima_Germain1}
on the $K$-theory of amalgamated free products.
In these works, the $KK$-equivalence between
any reduced amalgamated free product and the corresponding full one
is proved.
One of key ingredients in the proof is a geometric description of the $KK$-class
of $\id\colon A\to A$ using
an analogue of the Bass--Serre tree,
inspired by Julg and Valette's work \cite{Julg_Valette} in the group case
and its quantum group analogue \cite{Vergnioux}.
Fima and Germain \cite{Fima_Germain1} then established
six-term exact sequences relating $KK$-groups of $A$ to those of $A_1, A_2,$ and $D$.
These considerations are further generalized to the framework of graph of $\rmC^*$-algebras \cite{Fima_Freslon,Fima_Germain2}.

\medskip
Our main technical result is an explicit identification of $\DT(A,E)$ with a Cuntz--Pimsner algebra (Theorem \ref{prop_cuntz_pimsner})
of which the Toeplitz extension is always semisplit (Theorem \ref{prop_semisplit})
inspired by \cite{Spielberg,Okayasu}.
As a consequence, we obtain two kinds of ``amenability'' results for $\DT(A,E)$.
Our first amenability result is that 
$\DT(A,E)$ is nuclear (resp.,~exact, and has the completely bounded approximation property) if and only if $A_1$ and $A_2$ have the same property (Corollary \ref{cor_app}).
This follows from the well-established theory of Pimsner algebras \cite{Pimsner_free,Katsura,Dykema_Smith},
and partially generalizes Ozawa's result \cite{Ozawa_TA}.
The other one is that
$\DT(A,E)$ is
the universal $\rmC^*$-algebra generated by
a unital copy of the \emph{algebraic} amalgamated free product of $A_1$ and $A_2$ over $D$
and projections $e_1$ and $e_2$ such that $e_1+e_2=1$ and
\[
e_k a e_k =E_k(a)e_k \quad \text{for}\quad a\in A_k, \; k=1,2
\]
(Corollary \ref{cor_univ}).
This universality can be viewed as
an analogue of the
isomorphisms between the full and reduced crossed products for amenable actions.
Since the definition of the universal $\rmC^*$-algebra does not involve the reduced amalgamated free product,
it provides a new criterion for $*$-homomorphisms from the full amalgamated free product factoring through the reduced one.

\medskip
As applications,
we give simple proofs of the following known results by using the universality and the Cuntz--Pimsner algebra structure of $\DT(A,E)$.
\begin{itemize}
\item Reduced amalgamated free product $\rmC^*$-algebras of exact $\rmC^*$-algebras are exact
(Dykema \cite{Dykema}, see \cite{Dykema_Shly,Ricard_Xu} for different proofs).
\item Reduced amalgamated free product $\rmC^*$-algebras of nuclear $\rmC^*$-algebras are nuclear
provided that the image of one of free components under the GNS representation contains the Jones projection (Ozawa \cite{Ozawa_RIMS}).
\item
If each free component of a reduced amalgamated free product admits
an embedding into a corresponding one of another reduced amalgamated free product
that is compatible with conditional expectations,
then the embeddings induce an inclusion of reduced amalgamated free products
(Blan\-chard--Dykema \cite{Blanchard_Dykema}).
\item There exist six-term exact sequences relating $KK$-groups of any reduced amalgamated free product $\rmC^*$-algebra
to those of free compents and the amalgamated subalgebra (Fima--Germain \cite{Fima_Germain1}).
\end{itemize}
We point out that our proof of Ozawa's result for nuclearity works for completely bounded approximation property too.
Also, we note that
in our proof of the exact sequences of $KK$-groups for $(A,E)$,
the semisplit Toeplitz extension of the Cuntz--Pimser algebra
identified with  $\DT(A,E)$ plays a key role.
In the course of our proof,
we will show that
the embedding $A\hookrightarrow \DT(A,E)$ is right invertible in $KK$-theory using the description of $\id \in KK(A,A)$ from \cite{Hasegawa_IMRN,Fima_Germain1}.
Then, the desired sequences will follow from the six-term exact sequences of $KK$-groups (\cite{Cuntz_Skandalis}) induced from the Toeplitz extension.
As a by-product of our approach,
we show that $\DT(A,E)$ is $KK$-equivalent to $A\oplus D$.
By the result in \cite{Hasegawa_IMRN,Fima_Germain2},
this implies that the $KK$-class of $\DT(A,E)$ is independent of the choice
of conditional expectations.

The paper is organized as follows.
In \S \ref{section_prelim} we recall basic facts on Hilbert $\rmC^*$-modules, reduced amalgamated free products,
and Pimsner algebras.
In \S \ref{section_cpt} we construct the $\rmC^*$-algebra $\DT(A,E)$ based on the group case.
The identification result with a Cuntz--Pimsner algebra is given in \S \ref{section_pimsner}.
In \S \ref{section_app} we give proofs of above known results except the last one.
In the final section, we discuss the $K$-theory of $\DT(A,E)$ and give a proof of Fima--Germain's exact sequences of $KK$-groups.

\section*{Acknowledgment}
The author is grateful to Yuhei Suzuki for fruitful discussions on Lemma \ref{lem_nonfull}
and to Narutaka Ozawa for informing him of \cite{Choda}.
He wishes to appreciate his supervisor, Yoshimichi Ueda, for his constant encouragement
and variable comments.
Also, he wishes to thank Gilles Pisier and Texas A\&M University, where this paper was completed,
for the invitation and hospitality.
This work was supported by the Research Fellow of the Japan Society for the Promotion of Science.

\section{Preliminaries}\label{section_prelim}
\renewcommand{\thetheorem}{\arabic{section}.\arabic{subsection}.\arabic{theorem}}
\subsection{Hilbert $\rmC^*$-modules}
We refer to \cite{Lance} for the theory of Hilbert $\rmC^*$-modules.
For a Hilbert $\rmC^*$-module $X$ over a $\rmC^*$-algebra $A$, we denote by $\lL(X)$ the $\rmC^*$-algebra
of all adjointable operators on $X$.
The ``rank one operator'' associated with $\xi, \eta \in X$ is denoted by
$\theta_{\xi, \eta}$ and the elements of
$\lK (X ) := \ospan \{ \theta_{\xi,\eta} \mid \xi,\eta \in X \}$ are called \emph{compact} operators.
An \emph{$A$-$B$ $\rmC^*$-correspondence} is a pair $(Y,\phi_Y)$ such that
$Y$ is a Hilbert $B$-module and $\phi_Y \colon A \to \lL(Y)$ is a $*$-homomorphism.
When $\phi_Y$ is injective, the pair is called \emph{injective}.
we denote the interior tensor product of $X$ and $(Y,\phi_Y)$ by $X\otimes_A Y$.
For each $x \in \lL (X)$,
we denote by $x\otimes 1$ the image of $x$ under the natural map $\lL (X) \to \lL (X\otimes_A Y)$.
Let $\calI$ be a set and $X_i$ be a Hilbert $A_i$-module for $i\in\calI$.
Let $A=\prod_{i\in\calI}A_i$ be the direct product.
We will use the Hilbert $A$-module $\bigboxplus_{i\in\calI}X_i =\bigoplus_{i\in\calI}X_i\otimes_{A_i} A$,
where the interior tensor product is with respect to the natural inclusion $A_i \to A$.
Note that $\lL (\bigboxplus_{i\in\calI} X_i) \cong \prod_{i\in\calI}\lL (X_i)$.
When $\calI=\{1,2\}$, we write $X_1\boxplus X_2$ instead of $\bigboxplus_{i=1,2}X_i$.
The next lemma will be used later.
\begin{lemma}\label{lem_cpt_tensor}
Let $A$ and $B$ be $\rmC^*$-algebras, $X$ be Hilbert $A$-module and $(Y,\phi_Y)$ be an injective $A$-$B$ $\rmC^*$-correspondence.
For any $x\in \lL(X)$, if $x\otimes 1 \in \lL(X\otimes_A Y)$ is compact, then so is $x$.
\end{lemma}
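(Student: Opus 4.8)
The plan is to exploit the injectivity of $\phi_Y$ to build, for each state-like functional on $B$, a "partial isometry back" from $X \otimes_A Y$ to $X$, and to argue that compactness of $x \otimes 1$ forces compactness of $x$ by testing against finite-rank approximants. Concretely, suppose $x \otimes 1 \in \lK(X \otimes_A Y)$. Then for every $\varepsilon > 0$ there are finitely many $\xi_i \otimes \eta_i, \zeta_i \otimes \omega_i \in X \otimes_A Y$ with $\|x \otimes 1 - \sum_i \theta_{\xi_i \otimes \eta_i,\, \zeta_i \otimes \omega_i}\| < \varepsilon$. I want to transfer this inequality down to $\lL(X)$.

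First I would reduce to the case where $B$ is unital and $\phi_Y$ is unital, or more precisely work with an approximate unit: the key point is that since $\phi_Y$ is injective, for $a \in A$ one has $\|a\| = \|\phi_Y(a)\| = \sup\{\|\phi_Y(a)\eta\| : \eta \in Y, \|\eta\| \le 1\}$, and moreover $\langle a\xi, a\xi\rangle_A$ is recovered from $\langle \xi \otimes \eta,\, (a^*a \otimes 1)(\xi \otimes \eta)\rangle_B = \langle \eta,\, \phi_Y(\langle \xi, a^*a\xi\rangle_A)\eta\rangle_B$ by taking suitable $\eta$. The cleanest route: pick a state $\psi$ on $B$ and consider the GNS-type seminorm on $X \otimes_A Y$; injectivity of $\phi_Y$ lets one choose, for any given finitely many elements of $A$, a vector $\eta \in Y$ with $\|\eta\|\le 1$ such that $\psi(\phi_Y(a^*a)) \ge (1-\delta)\|a\|^2$ simultaneously, which gives an isometry-up-to-$\delta$ from the relevant finite-dimensional piece of $X$ into $X \otimes_A Y$.

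The main step is then: compactness of $x \otimes 1$ means $x \otimes 1$ is a norm-limit of operators of the form $y \otimes 1$ where $y \in \lK(X)$ — this is actually the crux, and it is where I expect the real obstacle to lie. The natural map $\lK(X) \to \lL(X \otimes_A Y)$ has image inside $\lK(X \otimes_A Y)$, but it need not be surjective onto it, and its range need not be closed; so one cannot simply say "$x \otimes 1$ compact $\Rightarrow$ $x \otimes 1 = y \otimes 1$ with $y$ compact." Instead I would argue as follows: the map $\lL(X) \to \lL(X \otimes_A Y)$, $x \mapsto x \otimes 1$, is isometric because $\phi_Y$ is injective (standard: $\|x \otimes 1\| = \|x\|$ for injective correspondences). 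Now $x \otimes 1 \in \lK(X \otimes_A Y)$; I claim $x \otimes 1$ lies in the image of $\lK(X) \otimes 1$. To see this, approximate $x \otimes 1$ by a finite sum $T = \sum_i \theta_{\xi_i \otimes \eta_i,\, \zeta_i \otimes \omega_i}$; each such rank-one operator equals $(\theta_{\xi_i, \zeta_i} \otimes 1)$ composed with $\phi_Y(\cdot)$-type corrections — more carefully, $\theta_{\xi \otimes \eta,\, \zeta \otimes \omega}(\xi' \otimes \omega') = \xi \otimes \eta \langle \omega,\, \phi_Y(\langle \zeta, \xi'\rangle)\omega'\rangle$, which one recognizes as $(\theta_{\xi,\zeta} \otimes 1)$ followed by the operator $1 \otimes \theta_{\eta,\omega}^{Y}$ sitting inside $\lL(X \otimes_A Y)$ via the right action; this does not literally land in $\lK(X) \otimes 1$, so the honest argument must instead estimate $\|x\|$ directly.

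So the version I would actually write: fix $\varepsilon>0$ and a finite-rank $T = \sum_{i=1}^n \theta_{\alpha_i, \beta_i}$ on $X \otimes_A Y$ with $\|x \otimes 1 - T\| < \varepsilon$. Choose a finite-dimensional-enough subspace: for any finite set $F \subset X$ and $\delta>0$, using injectivity of $\phi_Y$ choose $\eta \in Y$, $\|\eta\|\le 1$, with $\langle \xi \otimes \eta, \xi' \otimes \eta\rangle_B$ close to $\langle \xi, \xi'\rangle_A$ under a chosen faithful state, so that the map $V_\eta \colon \xi \mapsto \xi \otimes \eta$ is a $\delta$-isometry on $\operatorname{span} F$; then for $\xi \in \operatorname{span}F$, $\|x\xi\|^2 \approx \|(x\otimes 1)(\xi \otimes \eta)\|^2 \le (\|T\| + \varepsilon)^2\|\xi\|^2$, and more importantly $x\xi = V_\eta^*(x \otimes 1)V_\eta \xi$ up to $\delta$, while $V_\eta^* T V_\eta$ is a finite-rank operator on $\operatorname{span}F$. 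Letting $F$ exhaust $X$ and $\delta \to 0$, one produces finite-rank operators on $X$ approximating $x$ in norm, i.e. $x \in \lK(X)$. The obstacle, as flagged, is making the "let $F$ exhaust $X$" step produce a single norm-approximant rather than just strong approximation; this is handled by noting that the rank $n$ of $T$ is fixed once $\varepsilon$ is fixed, so the approximants $V_\eta^* T V_\eta$ have uniformly bounded rank $\le n$, and a standard compactness argument (or direct estimate on rank-$\le n$ operators) upgrades strong approximation to norm approximation within the set of rank-$\le n$ operators; hence $\operatorname{dist}(x, \lK(X)) \le \varepsilon$, and since $\varepsilon$ was arbitrary, $x \in \lK(X)$.
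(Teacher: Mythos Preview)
Your approach has a genuine gap. The map $V_\eta \colon X \to X \otimes_A Y$, $\xi \mapsto \xi \otimes \eta$, runs between Hilbert modules over \emph{different} coefficient algebras ($A$ on the source, $B$ on the target), so there is no adjoint $V_\eta^*$ in the Hilbert-module sense and the expression $V_\eta^* T V_\eta$ is not defined as an operator on $X$. Your comparison ``$\langle \xi \otimes \eta, \xi' \otimes \eta\rangle_B$ close to $\langle \xi, \xi'\rangle_A$'' likewise compares an element of $B$ with an element of $A$; pushing through a state on $B$ produces only scalars, which is not enough to rebuild an adjointable operator on the $A$-module $X$. Even if one grants some Hilbert-space surrogate, the final step---upgrading strong approximation by rank-$\le n$ operators to a norm estimate---would at best show that the Hilbert-space avatar of $x$ has finite rank, which does not return compactness of $x$ in $\lK(X)$.

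The paper bypasses all of this with a two-line approximate-unit argument. Take an approximate unit $(e_i)$ of $\lK(X)$. Since $e_i\xi \to \xi$ for every $\xi \in X$, the net $(e_i \otimes 1)$ converges strictly to $1$ in $\lL(X \otimes_A Y)$, and compactness of $x \otimes 1$ then forces $(e_i x) \otimes 1 = (e_i \otimes 1)(x \otimes 1) \to x \otimes 1$ in norm. You already observed the one fact that survives from your sketch: injectivity of $\phi_Y$ makes $z \mapsto z \otimes 1$ isometric on $\lL(X)$. Hence $e_i x \to x$ in norm, and since each $e_i x \in \lK(X)$ we conclude $x \in \lK(X)$.
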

\begin{proof}
Take an approximate unit $(e_i)_i$ of $\lK(X)$.
Since $\|e_i \xi - \xi \| \to 0$ holds for $\xi \in X$,
if $x\otimes 1$ is compact, then $e_i x\otimes 1$ converges to $x\otimes 1$ in norm.
Since $\phi_Y$ is injective, this implies that $x =\lim_i e_ix \in \lK(X)$.
\end{proof}

\subsection{Reduced amalgamated free products}\label{ss_amal}
We fix notations on reduced amalgamated free products used throughout this paper.
Let $\{ (D \subset A_k, E_k )\}_{ k \in \calI}$ be a family of unital inclusions of $\rmC^*$-algebras with
nondegenerate (i.e., the associated GNS representation is faithful) conditional expectations.
For each $m \in \lN$,
we set $\calI_m:= \{ \iota : \{ 1, \dots, m \} \to \calI \mid
\iota (j) \neq \iota (j+1) \text{ for }1 \leq j \leq m-1 \}$.
The \emph{reduced amalgamated free product} $(A, E) = \bigast_{D} (A_k , E_k ) $
is a $\rmC^*$-algebra $A$ generated by copies of $A_k$, $k\in\calI$
such that the embeddings $A_k\hookrightarrow A,k\in\calI$ coincide on $D$
together with a nondegenerate conditional expectation $E$ such that
$
E (a_1 \cdots a_m ) = 0
$
for all $m \in \lN$, $\iota \in \calI_m$ and $a_j \in \ker E_{\iota(j)}^\circ$ for $j= 1,\dots,m$.
(see \cite{Voiculescu} for the construction).
Any element in $A$ of the form $a_1 \cdots a_m$ as above is called a \emph{reduced word}.

We denote by $( X_k , \phi_{X_k}, \xi_k )$ and $(X, \phi_X, \xi_0 )$
the GNS representations associated with $E_k$ and $E$, respectively,
and set $A_k^\circ = \ker E_k$, $X_k^\circ =X_k \ominus \xi_k D$
and $a^\circ := a - E (a)$ for $a\in A$.
Then, $X$ is naturally identified with
\begin{align*}
  \xi_0 D \oplus \bigoplus_{m \geq 1} \bigoplus_{ \iota \in \calI _m }
  X_{\iota ( 1)}^\circ \otimes_D \cdots \otimes_D X_{\iota (m)}^\circ.
\end{align*}
For each $k \in \calI$, we denote by $P_{(\ell, k)}$ and $P_{(r,k)}$ the projections onto the following submodules, respectively:
\begin{align*}
X (\ell, k) &:= \xi_0 D \oplus \bigoplus_{ m \geq 1}  \underset{\iota (1) \neq k }{\bigoplus_{ \iota \in \calI_m}}
  X_{\iota (1)}^\circ \otimes_D \cdots  \otimes_D X_{\iota (m )}^\circ , \\
X (r, k) & :=  \xi_0 D \oplus \bigoplus_{ m \geq 1} \underset{\iota (m) \neq k }{\bigoplus_{ \iota \in \calI_m}}
  X_{\iota (1)}^\circ \otimes_D \cdots  \otimes_D X_{\iota (m )}^\circ .
\end{align*}
The compression by the projection onto $\xi_0D \oplus X_k^\circ \cong X_k$ defines the conditional
expectation $E_{A_k} \colon A \to A_k$ such that $E = E_k\circ E_{A_k}$,
and the associated GNS representation is denoted by $(Y_k, \phi_{Y_k}, \eta_k )$.
Recall that there exists a unitary $S_k \colon X ( r, k ) \otimes_D A_k \to Y_k$ such that
$S_k a_1 \cdots a_m \xi_0 \otimes b  = a_1 \cdots a_m \eta_k b$ holds for all $m\in\lN$,
all reduced words $a_1 \cdots a_m$ with $a_m \notin A_k^\circ$ and $b \in A_k$
(see \cite[Lemma 3.1.1]{Hasegawa_IMRN}).
We use the following $A$-$\prod_{k\in\calI}A_k$ and $A$-$A$ $\rmC^*$-correspondences
\begin{equation}\label{eq_YZ}
  (Y, \phi_Y) = \bigboxplus_{k\in \calI} (Y_k, \phi_{Y_k} ), \quad (Z,\phi_Z ) = \bigoplus_{k\in\calI}(Y_k\otimes_{A_k}A, \phi_{Y_k}\otimes 1),
\end{equation}
and the $*$-homomorphism $\widetilde{\phi}_Z\colon \lL(Z)\to\lL(Y)$
induced from the natural $*$-homomorphisms $\lL(Y_k) \to \lL(Y_k \otimes_{A_k} A)$, $k\in \calI$.
We may identify $A$ with $\phi_Y (A)$.

\begin{remark}
In \cite{Fima_Germain1}, Fima and Germain introduced the vertex-reduced amalgamated
free product to deal with the case when $E_k$ is possibly ``degenerate''.
We note that our results in \S \ref{section_cpt} and \S \ref{section_pimsner}
can be shown for the vertex-reduced amalgamated free product in the same manner.
\end{remark}
\subsection{Pimsner algebras}
We fix notations and terminologies on Pimsner algebras following \cite{Katsura}.
Let $(X, \phi_X)$ be a $\rmC^*$-correspondence over a $\rmC^*$-algebra $A$.
We do \emph{not} assume that $\phi_X$ is injective.
Recall that a \emph{representation} of $X$ on a $\rmC^*$-algebra $B$ is a pair $(\pi, t )$ such that
$\pi \colon A \to B$ is a $*$-homomorphism and $t \colon X \to B$ is a linear map satisfying
$t (\xi)^* t (\eta ) = \pi (\i<\xi, \eta >)$ and
$\pi (a) t (\xi ) \pi (b) = t (\phi_X (a) \xi b )$ for $\xi, \eta\in X$ and $a, b \in A$.
We denote by $\rmC^*(\pi,t)$ the $\rmC^*$-algebra generated by $\pi(A)$ and $t(X)$ inside $B$.

For each $n\in \lN$, let $X^{\otimes n }$ denote the interior tensor product
$X \otimes_A X \otimes_A\cdots \otimes_A X$ of $n$ copies of $X$, equipped with the left action $\phi_X\otimes 1$,
and set $X^{\otimes 0}=A$, on which $A$ acts by the left multiplication.
The full Fock space $\calF (X) = \bigoplus_{n \geq 0} X^{\otimes n}$ over $X$ forms
a $\rmC^*$-correspondence over $A$
and the left action of $A$ on $\calF(X)$ is denoted by $\varphi_\infty$.
For each $\xi$, we define the creation operator $\tau_\infty(\xi)$ on $\calF(X)$ by
$\tau_\infty (\xi ) \eta = \xi \otimes \eta $ for $\eta \in X^{\otimes n}$ and $n\geq 0$.
Then the pair $(\varphi_\infty, \tau_\infty )$ is a representation of $X$
and we call $\calT(X):=\rmC^*(\varphi_\infty,\tau_\infty)$
the \emph{Toeplitz--Pimsner} algebra of $X$.
Note that $(\varphi_\infty,\tau_\infty)$ satisfies the universal property
that
if $(\pi, t)$ is a representation of $X$, then there exists a $*$-homomorphism $\rho\colon \rmC^*(\varphi_\infty,\tau_\infty)
\to \rmC^*(\pi,t)$ such that $\rho\circ \varphi_\infty = \pi$ and $\rho\circ\tau_\infty=t$.
The compression map by the projection onto $X^{\otimes0}$ defines a nondegenerate conditional expectation
$E_X \colon \calT (X) \to A$.

Any representation $(\pi,t)$ induces a $*$-homomorphism $\psi_t \colon \lK (X) \to B$ such that
$\psi_t (\theta_{\xi,\eta} ) = t (\xi)t (\eta)^*$.
We define the ideal $J_X$ of $A$ by
\[
  \phi_X^{-1} (\lK (X)) \cap (\ker \phi_X )^\perp = \{a\in  \phi_X^{-1} (\lK (X)) \mid ax =0 \; \text{for}\; x \in \ker \phi_X\}.
\]
We say that $(\pi, t )$ is \emph{covariant} if $\pi = \psi_t \circ \phi_X$ holds on $J_X$.
The ideal of $\calT(X)$ generated by $\{\varphi_\infty(x) - \psi_{\tau_\infty}(\phi_X(x)) \mid x \in J_X\}$
is naturally isomorphic to $\lK(\calF(X)J_\frakX)$
and the quotient of $\calT(X)$ by $\lK(\calF(X)J_\frakX)$ is called the \emph{Cuntz--Pimsner algebra} of $X$
and denoted by $\calO(X)$.
Note that the definition of $\calO(X)$ is different from Pimsner's original one in \cite{Pimsner_free}.
The covariant representation of $X$ on $\calO(X)$ induced from the quotient map and $(\varphi_\infty,\tau_\infty)$ is
universal among covariant representations.
A representation $(\pi,t)$ is said to \emph{admit a gauge action} if there exists a continuous action $\gamma$
of $\lT= \{z\in\lC \mid |z| =1\}$ on $\rmC^*(\pi,t)$
such that $\gamma_z \circ \pi =\pi$ and $\gamma_z(t(\xi))=z t(\xi)$ for $z \in \lT$ and $\xi \in X$.
We will use the next gauge-invariant uniqueness theorems.

\begin{theorem}[{\cite[Theorem 6.2, Theorem 6.4]{Katsura}}]\label{thm_GIU}
For any representation $(\pi,t)$ of $X$,
the canonical surjection from $\calT(X)$ onto $\rmC^*(\pi,t)$ is injective if and only if
$\pi$ is injective, $(\pi,t)$ admits a gauge action, and $\pi(J_X)\cap \psi_t(\lK(X))=\{0\}$.
If $(\pi,t)$ is covariant,
then the canonical surjection from $\calO(X)$ onto $\rmC^*(\pi,t)$ is injective if and only if
$\pi$ is injective and $(\pi,t)$ admits a gauge action.
 \end{theorem}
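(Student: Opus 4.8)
The plan is to follow the standard route to gauge‑invariant uniqueness theorems. Write $\rho\colon\calT(X)\to\rmC^*(\pi,t)$ for the canonical surjection afforded by the universal property of $(\varphi_\infty,\tau_\infty)$, so that $\rho\circ\varphi_\infty=\pi$ and $\rho\circ\tau_\infty=t$; in the covariant case $\rho$ factors through the quotient $\calT(X)\to\calO(X)$.

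\emph{Necessity} is routine. If $\rho$ is injective then $\pi=\rho\circ\varphi_\infty$ is injective, since $E_X\circ\varphi_\infty=\id_A$ forces $\varphi_\infty$ to be injective. A gauge action on $\rmC^*(\pi,t)$ is obtained by transporting along $\rho$ the gauge action $\gamma$ on $\calT(X)$, which exists because $(\varphi_\infty,z\tau_\infty)$ is again a representation of $X$ for every $z\in\lT$. Finally, every element of $\psi_{\tau_\infty}(\lK(X))$ annihilates $X^{\otimes 0}$, whereas $\varphi_\infty(a)$ acts on $X^{\otimes 0}=A$ by left multiplication by $a$; hence $\varphi_\infty(J_X)\cap\psi_{\tau_\infty}(\lK(X))=\{0\}$ in $\calT(X)$, and injectivity of $\rho$ carries this to $\pi(J_X)\cap\psi_t(\lK(X))=\{0\}$. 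For the Cuntz--Pimsner statement one argues in the same way, using that $\ker\phi_X\cap J_X=\{0\}$ by the very definition of $J_X$, so that the canonical $*$-homomorphism $A\to\calO(X)$ is injective.

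\emph{Sufficiency} is the substance of the theorem: assume that $\pi$ is injective, that $(\pi,t)$ admits a gauge action $\beta$, and (for the Toeplitz statement) that $\pi(J_X)\cap\psi_t(\lK(X))=\{0\}$. First, one reduces to the fixed‑point algebra: averaging $\gamma$ on $\calT(X)$ and $\beta$ on $\rmC^*(\pi,t)$ over $\lT$ yields faithful conditional expectations $\Phi\colon\calT(X)\to\calT(X)^\gamma$ and $\Psi\colon\rmC^*(\pi,t)\to\rmC^*(\pi,t)^{\beta}$ with $\rho\circ\Phi=\Psi\circ\rho$; since $\Phi$ is faithful and $\rho(a)=0$ gives $\rho(\Phi(a^*a))=0$, it is enough to prove that $\rho$ is injective on the core $\calT(X)^\gamma$. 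Second, one analyses the core: one identifies $\calT(X)^\gamma=\overline{\bigcup_{n\geq 0}\calB_{[0,n]}}$, where $\calB_{[i,n]}:=\sum_{j=i}^{n}\psi_{\tau_\infty}^{(j)}(\lK(X^{\otimes j}))$, $\calB_{[0,0]}=\varphi_\infty(A)$, and $\psi_{\tau_\infty}^{(j)}\colon\lK(X^{\otimes j})\to\calT(X)$ is the canonical $*$-homomorphism determined by $(\varphi_\infty,\tau_\infty)$; by continuity it suffices to show $\rho$ is injective on each $\calB_{[0,n]}$. For this one uses the finite chain of ideals $\calB_{[0,n]}\triangleright\calB_{[1,n]}\triangleright\cdots\triangleright\calB_{[n,n]}\triangleright\{0\}$ together with the elementary principle that a $*$-homomorphism is injective if it is injective on an ideal and the map it induces on the quotient (mapping into the quotient by the ideal generated by the image) is injective. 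The successive subquotients are $\calB_{[0,n]}/\calB_{[1,n]}\cong A$ at the top --- the intersection $\varphi_\infty(A)\cap\calB_{[1,n]}$ is $\{0\}$ since $\calB_{[1,n]}$ annihilates $X^{\otimes 0}$ --- then quotients of $\lK(X^{\otimes j})$ for $1\leq j\leq n-1$, and finally $\calB_{[n,n]}\cong\lK(X^{\otimes n})$. On each subquotient arising from $\lK(X^{\otimes j})$ injectivity of $\rho$ holds because injectivity of $\pi$ makes $t$, and hence each $\psi_t^{(j)}\colon\lK(X^{\otimes j})\to\rmC^*(\pi,t)$, isometric; on the top subquotient $A$ injectivity is precisely where the hypothesis $\pi(J_X)\cap\psi_t(\lK(X))=\{0\}$ enters, as it controls which elements $\pi(a)$ lie in the ideal generated by $\rho(\calB_{[1,n]})$. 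The Cuntz--Pimsner version runs through the analogous filtration of the core of $\calO(X)$; there covariance pushes $\varphi_\infty(J_X)$ into the next stage, so the top subquotient becomes $A/J_X$ and the condition $\pi(J_X)\cap\psi_t(\lK(X))=\{0\}$ becomes automatic, leaving only injectivity of $\pi$ and existence of a gauge action.

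The main obstacle is the second step, and within it the precise bookkeeping of the filtration: checking that each $\calB_{[i,n]}$ is an ideal of $\calB_{[0,n]}$ with the stated subquotients, matching the ideal structure of $\rho(\calB_{[1,n]})$ inside $\rmC^*(\pi,t)$ with that of $\calB_{[1,n]}$ inside $\calT(X)$, and tracking how $J_X$ enters through the relation $\varphi_\infty(a)-\psi_{\tau_\infty}(\phi_X(a))$ for $a\in J_X$. This last point is what singles out the hypothesis $\pi(J_X)\cap\psi_t(\lK(X))=\{0\}$ --- equivalently, covariance in the Cuntz--Pimsner case --- as the correct condition. The reduction to the core and the entire necessity direction are routine.
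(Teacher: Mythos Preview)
The paper does not prove this statement: Theorem~\ref{thm_GIU} is quoted verbatim from Katsura \cite[Theorems 6.2 and 6.4]{Katsura} and is used as a black box (in the proofs of Theorem~\ref{prop_cuntz_pimsner} and Proposition~\ref{prop_toeplitz}). So there is no ``paper's own proof'' to compare against; your write-up is essentially a compressed outline of Katsura's original argument.

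As such an outline, what you wrote is broadly correct: the reduction to the gauge-fixed core via the faithful conditional expectations, the inductive-limit description $\calT(X)^\gamma=\overline{\bigcup_n \calB_{[0,n]}}$, and the ideal chain $\calB_{[0,n]}\rhd\calB_{[1,n]}\rhd\cdots$ are exactly the skeleton of \cite[\S5--6]{Katsura}. One point deserves more care than you give it. You say the hypothesis $\pi(J_X)\cap\psi_t(\lK(X))=\{0\}$ enters only at the ``top subquotient'' $A$; in fact it recurs at every level of the filtration, because for each $j\ge1$ one must identify which elements of $\lK(X^{\otimes j})$ are pushed into $\calB_{[j+1,n]}$, and this again involves $\phi_X^{-1}(\lK(X))$ acting on the last tensor factor. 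Katsura handles this uniformly via his ideal $I_{(\pi,t)}$ (shown to lie inside $J_X$ when $\pi$ is injective), which is why the single hypothesis on $J_X$ suffices. Your sketch would need that lemma to close the induction. Similarly, in the Cuntz--Pimsner half the statement that ``the top subquotient becomes $A/J_X$'' is a shorthand: the core of $\calO(X)$ has a more intricate filtration (Katsura's $\calO_n$), and the covariance relation collapses part of each level, not just the top one.
\medskip
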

\section{Compactifications of Bass--Serre trees}\label{section_cpt}
\subsection{Group case}\label{ss_groupcase}
Let $\Gamma= \Gamma_1 \ast_\Lambda \Gamma_2$ be an amalgamated free product of discrete groups and set $\calI=\{1,2\}$.
Recall that the \emph{Bass--Serre tree} associated with $\Gamma$ is the graph $\bfT$
of which the vertex set is $\bfV =\Gamma / \Gamma_1 \sqcup \Gamma/ \Gamma_2$
and the edge set is $\bfE=\Gamma / \Lambda$,
such that the edge $g \Lambda$ relates $g\Gamma_1$ and $g\Gamma_2$ (see \cite{Serre}).
Then, the \emph{Bowditch compactification} $\Delta \bfT:= \bfV \sqcup \partial \bfT $ of $\bfT$
is given in the following way (see \cite{Bowditch} or \cite[\S\S 5.2]{Brown_Ozawa} for details).
If we identify each $v\in\bfV$ with the unique finite geodesic path in $\bfT$
connecting $v$ with the origin $e\Gamma_1 \in \bfV$ (the coset of $\Gamma_1$ with respect to the neutral element),
then each element in the Gromov boundary $\partial \bfT$ is identified with a one-sided infinite geodesic paths from $e\Gamma_1$.
For each $x, y \in \Delta \bfT$ we denote by $[x, y] \subset \DT $ the unique geodesic path connecting $x$ and $y$,
and for each finite subset $F $ of $V$, we set
\[
M (x, F) := \{ y \in \Delta \bfT \mid [x, y ] \cap F = \emptyset \}.
\]
Then, $M (x, F)$ is clopen in $\DT$ and the family
$\{ M (x, F) \mid x \in \Delta \bfT, F \subset \bfV \text{ finite} \}$ forms an open base for $\Delta \bfT$.
We have natural inclusions $C (\Delta \bfT ) \subset \ell^\infty (\bfV )\subset \lB (\ell^2 (\bfV ))$
and the action $\alpha \colon \Gamma \act C (\DT)$ is implemented by
the unitary representation $\pi \colon \Gamma \act \ell^2 (\bfV)$
induced from the left multiplication $\Gamma \act \bfV$.
Note that $(\ell^2(\bfV), \pi)$ is unitarily equivalent to the direct sum of quasiregular representations $( \ell^2 (\Gamma/\Gamma_k), \lambda_{\Gamma/ \Gamma_k})$, $k=1,2$.

\medskip
Let $A$, $A_k$ and $D$ be the reduced group $\rmC^*$-algebras of $\Gamma$, $\Gamma_k$ and $\Lambda$, respectively,
and $E\colon A \to D$, $E_k \colon A_k \to D$ and $E_{A_k} \colon A\to A_k$ be canonical conditional expectations.
Then, it follows that $(A, E) \cong (A_1, E_1) \ast_D (A_2, E_2)$.
We denote the left regular representation of $\Gamma$ by $\lambda$.
We use the notations in \S\S\ref{ss_amal} and assume that $A \subset \lL (Y)$.
Let $P_k\in\lL(Y)$ be the projection onto the closed submodule generated by
all vectors of the form $a_1a_2\cdots a_n \eta_j$ for some $j\in \calI$, $n\in\lN$, and reduced word $a_1a_2\cdots a_n$ with $a_1 \in A_k^\circ$.
More precisely, set
\begin{equation}\label{eq_P_k}
  P_k^\circ := \sum_{j\in\calI} S_j (P_{(\ell,k)}^\perp P_{(r,k)}\otimes 1 )S_j^*,
  \quad\quad P_k := e_{A_k} + P_k^\circ,
\end{equation}
where $e_{A_k} = \theta_{\eta_k,\eta_k} \in \lK (Y_k ) \subset \lK (Y)$ is the Jones projection of $E_{A_k}$.
\begin{proposition}\label{prop_group}
  The reduced crossed product $C ( \Delta \bfT ) \rtimes_\red \Gamma$ is isomorphic to $\rmC^* ( A, P_1, P_2)$.
\end{proposition}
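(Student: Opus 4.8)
The plan is to represent both $C(\DT)\rtimes_\red\Gamma$ and $\rmC^*(A,P_1,P_2)$ faithfully on $\ell^2(\bfV)\otimes\ell^2(\Gamma)$ and to match the generating sets. Write $\calI=\{1,2\}$. Recall the half‑tree decomposition of $\bfT$: deleting an edge splits $\bfT$ into two half‑trees whose closures in $\DT$ are complementary clopen sets; let $\ol{\bfT_k}$ ($k\in\calI$) be the closed half‑tree containing $e\Gamma_k$ obtained by deleting the edge $e\Lambda$, so that $\ol{\bfT_k}=M(e\Gamma_k,\{e\Gamma_{k'}\})$ (with $\{k,k'\}=\calI$), $\chi_{\ol{\bfT_1}}+\chi_{\ol{\bfT_2}}=1$, and $g\cdot\ol{\bfT_k}=M(g\Gamma_k,\{g\Gamma_{k'}\})$. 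Since $\bfV$ is dense in $\DT$, $C(\DT)$ acts faithfully by multiplication on $\ell^2(\bfV)$, and $\pi$ implements $\alpha$; hence the pair consisting of $f\mapsto M_f\otimes 1$ and $g\mapsto\pi(g)\otimes\lambda_g$ is covariant, and by Fell's absorption principle it gives a faithful representation of $C(\DT)\rtimes_\red\Gamma$ on $\ell^2(\bfV)\otimes\ell^2(\Gamma)$, carrying $u_g\mapsto\pi(g)\otimes\lambda_g$ and $f\mapsto M_f\otimes 1$.

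For the other algebra, put $\calH:=\bigoplus_{k\in\calI}Y_k\otimes_{A_k}\ell^2(\Gamma)$, where $\ell^2(\Gamma)$ is viewed as a $\rmC^*$‑correspondence over $A_k=\rmC^*_\red(\Gamma_k)$ via the restriction of $\lambda_\Gamma$ to $\Gamma_k$. As $A_k$ acts faithfully on $\ell^2(\Gamma)$, this correspondence is injective, so interior tensoring is faithful on $\lL(Y_k)$; thus $x\mapsto x\otimes 1$ is a faithful representation of $\lL(Y)=\prod_k\lL(Y_k)$, and in particular of $\rmC^*(A,P_1,P_2)$, on $\calH$. The GNS module $Y_k$ of $E_{A_k}$ is the orthogonal direct sum, indexed by $\Gamma/\Gamma_k$, of copies of the standard $A_k$‑module $A_k$, so $Y_k\otimes_{A_k}\ell^2(\Gamma)\cong\ell^2(\Gamma/\Gamma_k)\otimes\ell^2(\Gamma)$ and $\calH\cong\ell^2(\bfV)\otimes\ell^2(\Gamma)$; under this identification $\phi_{Y_k}(\,\cdot\,)\otimes 1$ is the induced representation $\mathrm{Ind}_{\Gamma_k}^{\Gamma}(\lambda_\Gamma|_{\Gamma_k})$, which the unitary $\delta_{g\Gamma_k}\otimes w\mapsto\delta_{g\Gamma_k}\otimes\lambda_{\ol g}w$ (for a fixed choice of coset representatives $\ol g$) intertwines with $\pi_k\otimes\lambda$, where $\pi_k=\lambda_{\Gamma/\Gamma_k}$. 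Assembling these into a unitary $W$ on $\ell^2(\bfV)\otimes\ell^2(\Gamma)$ gives $W(\phi_Y(\lambda_g)\otimes 1)W^*=\pi(g)\otimes\lambda_g$, which matches the copy of $\Gamma$ inside $C(\DT)\rtimes_\red\Gamma$.

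It then remains to identify $W(P_k\otimes 1)W^*$. Using the description of $P_k$ as the projection onto the closed submodule of $Y$ generated by the vectors $a_1\cdots a_n\eta_j$ with $a_1\cdots a_n$ reduced and $a_1\in A_k^\circ$, one checks that after tensoring this submodule becomes $\ell^2(\bfV\cap\ol{\bfT_k})\otimes\ell^2(\Gamma)$: its component in $Y_j\otimes_{A_j}\ell^2(\Gamma)=\ell^2(\Gamma/\Gamma_j)\otimes\ell^2(\Gamma)$ is the span of the $\delta_w\otimes(\,\cdot\,)$ over those cosets $w=\gamma\Gamma_j$ whose shortest representative begins with a syllable in $\Gamma_k\setminus\Lambda$, together with $\delta_{e\Gamma_k}\otimes(\,\cdot\,)$ when $j=k$; translating between shortest coset representatives and geodesics in $\bfT$ identifies this set of vertices with $\bfV\cap\ol{\bfT_k}$ (this can alternatively be read off the formula $P_k=e_{A_k}+P_k^\circ$). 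Since $W$ commutes with multiplication operators on the $\ell^2(\Gamma/\Gamma_j)$‑factors, $W(P_k\otimes 1)W^*=M_{\chi_{\ol{\bfT_k}}}\otimes 1$. Consequently $W(\rmC^*(A,P_1,P_2)\otimes 1)W^*=\rmC^*\bigl(\{\pi(g)\otimes\lambda_g\}_{g\in\Gamma},\,M_{\chi_{\ol{\bfT_1}}}\otimes 1,\,M_{\chi_{\ol{\bfT_2}}}\otimes 1\bigr)$ sits inside the faithful copy of $C(\DT)\rtimes_\red\Gamma$, and the inclusion $\subseteq$ is clear because $\chi_{\ol{\bfT_k}}\in C(\DT)$. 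For the reverse inclusion, conjugating $M_{\chi_{\ol{\bfT_k}}}\otimes 1$ by $\pi(g)\otimes\lambda_g$ produces $M_{\chi_{g\cdot\ol{\bfT_k}}}\otimes 1$ for every $g\in\Gamma$, $k\in\calI$, i.e.\ the indicators of all closed half‑trees of $\bfT$; since $M(x,F)=\bigcap_{f\in F}M(x,\{f\})$ and each $M(x,\{f\})$ is such a half‑tree, and the sets $M(x,F)$ form an open base of the compact zero‑dimensional space $\DT$, these indicators generate $C(\DT)$. Hence $\supseteq$ also holds, yielding $\rmC^*(A,P_1,P_2)\cong C(\DT)\rtimes_\red\Gamma$ via an isomorphism that is the identity on $A=\rmC^*_\red(\Gamma)$.

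\textbf{Expected obstacle.} The technical heart will be the computation of the third paragraph: tracing the submodule (or the explicit formula) defining $P_k$ through the induction‑and‑absorption unitary to conclude that $P_k$ becomes multiplication by $\chi_{\ol{\bfT_k}}$. This forces a careful comparison between the alternating‑word combinatorics of the reduced amalgamated free product and the geometry of geodesics in the Bass--Serre tree (shortest coset representatives versus which side of the edge $e\Lambda$ a vertex lies on). The remaining verifications — nondegeneracy and faithfulness of the various representations and interior tensor products, and the fact that the $M(x,F)$ are finite intersections of half‑trees — are routine but should be carried out with care.
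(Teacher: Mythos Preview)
Your proposal is correct and follows essentially the same approach as the paper: both arguments represent the two algebras on (a Hilbert-space or Hilbert-$A$-module completion of) $\ell^2(\bfV)\otimes\,\cdot$, identify $P_k$ with the indicator of the half-tree $M(e\Gamma_k,\{e\Gamma_{k'}\})$ via the natural unitary between $Y_k\otimes_{A_k}(\,\cdot\,)$ and $\ell^2(\Gamma/\Gamma_k)\otimes(\,\cdot\,)$, and then use that the $\Gamma$-translates of these half-tree indicators generate $C(\DT)$ (you via the open base of sets $M(x,F)$, the paper via Stone--Weierstrass). The only cosmetic difference is that the paper works at the level of Hilbert $A$-modules $Y_k\otimes_{A_k}A$ and invokes injectivity of $\widetilde{\phi}_Z$, whereas you tensor one step further with $\ell^2(\Gamma)$ and invoke Fell absorption.
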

\begin{proof}
Let $P_{\Gamma_1}$ be the characteristic function of $M(e\Gamma_1, \{e\Gamma_2\})$ and set $P_{\Gamma_2}=1-P_{\Gamma_2}$.
Note that $P_{\Gamma_k}\ell^2(\bfV) $ is the closed linear span of all the vectors $\delta_{g\Gamma_j}$
such that $j =1,2$ and $g$ is a reduced word beginning with some element in $\Gamma_k \setminus \Lambda$.
For any $k\neq j\in\{1,2\}$ and reduced word $g=g_1g_2\cdots g_n$
with $g_n \in \Gamma_j$,
the characteristic function of $M(gP_k,\{gP_j\})$ is equal to $\alpha_g(P_{\Gamma_k})$.
Since these projections separate of the points of $\DT$,
it follows from the Stone--Weierstrass theorem that
$C(\DT)\rtimes_\red \Gamma$ is isomorphic to the $\rmC^*$-subalgebra of $\lL(\ell^2(\bfV)\otimes A)$ generated by
$\{P_{\Gamma_k} \}_{k=1,2}$ and $\{\pi (g) \otimes \lambda (g) \}_{g\in \Gamma}$.
%
%\begin{align*} C (\Delta \bfT) \rtimes_\red \Gamma & \cong \rmC^* ( \{ P_{\Gamma_k} \otimes 1\}_{k=1,2} \cup \{ \pi (g) \otimes \lambda (g) \}_{g \in \Gamma } ) \subset \lL ( \ell^2 (\bfV) \otimes A) . \end{align*}
%
For each $k\in\{1,2\}$,
we define the unitary $U_k \colon \ell^2 (\Gamma / \Gamma_k) \otimes A \to Y_k  \otimes_{A_k} A$
by $U_k ( \delta_{g \Gamma_k } \otimes a ) = \lambda (g) \eta_k  \otimes \lambda (g )^* a$ for $g \in \Gamma$ and $a \in A$.
Then, one can easily check that $U_k (\lambda_{\Gamma / \Gamma_k} ( g ) \otimes \lambda (g) )
 = ( \phi_{Y_k} ( \lambda (g) ) \otimes 1 ) U_k$ for $g \in \Gamma$.
Let $(Z, \phi_Z )$ and $\widetilde{\phi}_Z \colon \lL (Y) \to \lL (Z)$ be as in Eq.\,(\ref{eq_YZ}).
Letting $U := U_1 \oplus U_2  \colon \ell^2 (V) \otimes A \to Z $ we obtain
\[
  U (\pi (g )  \otimes \lambda (g) ) U^* =\phi_Z ( \lambda (g)) \quad \text{for }g \in \Gamma,
  \quad
  \widetilde{\phi}_Z (P_k )= UP_{\Gamma_k} U^* \quad \text{for }k=1,2.
\]
Since $\widetilde{\phi}_Z$ is injective, $C (\Delta \bfT ) \rtimes_\red \Gamma$ is isomorphic to $\rmC^* ( A, P_1, P_2)$.
\end{proof}

%By a similar argument to the above proof,
%one can show that the $\rmC^*$-correspondence $\ell^2 (\bfE)\otimes A$ equipped with the left action
%$\lambda (g) \mapsto \lambda_{\Gamma /\Lambda} (g) \otimes \lambda (g)$ is unitarily equivalent to $(X\otimes_D A , \phi_X \otimes 1 )$.
%Therefore, the pair $(Z,\phi_Z )$ and $(X\otimes_D A, \phi_X\otimes 1)$ can be regarded as the Bass--Serre tree of $(A,E)$
%(c.f.~ \cite{Vergnioux,Fima_Freslon,Fima_Germain1,Fima_Germain2}).
%
%
%
%
\subsection{General case}\label{ss_general}
\begin{definition}
Let $(A, E) = \bigast_{D} (A_k , E_k ) $ be the reduced amalgamated free product of $\{(D\subset A_k, E_k)\}_{k\in\calI}$
and $P_k \in \lL(Y)$ be as in Eq.~(\ref{eq_P_k}).
We define $\DT(A,E)$ by the $\rmC^*$-algebra generated by $\phi_Y(A)$ and $\{P_k\}_{k\in\calI}$.
\end{definition}

We may identify $A$ with $\phi_Y(A)$ and assume that $A\subset \DT(A,E)$.
We will use the following representation of $\Delta\bfT(A,E)$.
For each $k\in\calI$, we consider the $\DT(A,E)$-$D$ $\rmC^*$-correspondence $X^{(k)} = Y_k \otimes_{A_k} X_k \cong X$
with the left action $\sigma_k$ defined by the composition of the cut-off $\lL ( Y )\cong\prod_{i\in\calI}\lL(Y_i) \to \lL(Y_k)$
and the induced map $\lL ( Y_k ) \to \lL ( Y_k \otimes_{A_k} X_k )$.
Note that for each $k,j\in\calI$ with $k\neq j$ one has
\begin{align} \label{eq_sigma}
\sigma_k |_A  = \phi_X , \quad \sigma_k ( P_k ) = e_D + P_{(\ell,k)}^\perp, \quad \sigma_{j} ( P_k  ) = P_{(\ell,k)}^\perp,
\end{align}
and that $\bigoplus_k (X^{(k)}, \sigma_k)$ is faithful if $E_k$ is nondegenerate for every $k\in \calI$.

The next proposition follows from the definition, so we omit the proof.
\begin{proposition}\label{prop_proj}
The following hold true:
\begin{itemize}
  \item[(i)] The projections $P_k, k \in \calI$ commute with $D$.
  \item[(ii)] Any element $a \in A_k$ enjoys $P_k^\perp a  P_k^\perp =  E (a) P_k^\perp$ and $ a^\circ P_k^\perp = P_k^\circ a P_k^\perp$.
  \item[(iii)] The compression $\lL ( Y ) \to \lL ( \eta_k A_k ) \cong A_k$ by $e_{A_k}$ defines a conditional expectation from $\Delta\bfT(A,E)$ onto $A_k$ extending $E_{A_k}$. 
\end{itemize}
\end{proposition}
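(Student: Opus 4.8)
The plan is to reduce (i) and (ii) to concrete computations on the free‑product module $X$ by composing with the faithful family $\{(X^{(k)},\sigma_k)\}_{k\in\calI}$, and to obtain (iii) from Tomiyama's theorem. Throughout I use Eq.~(\ref{eq_sigma}): $\sigma_i|_A=\phi_X$ for every $i$, $\sigma_k(P_k)=e_D+P_{(\ell,k)}^\perp$, and $\sigma_j(P_k)=P_{(\ell,k)}^\perp$ for $j\neq k$, so each $\sigma_i(P_k^\perp)$ is one of the syllable projections $P_{(\ell,k)}-e_D$ or $P_{(\ell,k)}$ on $X$. For (i): since $D\subset A_\iota$ for all $\iota$ and $\phi_X(d)\xi_0=\xi_0d$, left multiplication by $d\in D$ carries a reduced word to a reduced word of the same first‑letter type, so $\phi_X(d)=\sigma_i(d)$ leaves $\xi_0D$ and $X(\ell,k)$ invariant and hence commutes with $e_D$ and $P_{(\ell,k)}^\perp$, thus with $\sigma_i(P_k)$ for every $i$; faithfulness of $\bigoplus_{k\in\calI}(X^{(k)},\sigma_k)$ then gives $[\phi_Y(d),P_k]=0$.

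For (ii), fix $a\in A_k$ and write $a=a^\circ+E(a)$ with $a^\circ\in A_k^\circ$ and $E(a)\in D$. The computation that carries the argument is that for a reduced word $\zeta$ in $X(\ell,k)$ one has $\phi_X(a)\zeta=\phi_X(a^\circ)\zeta+\phi_X(E(a))\zeta$, where $\phi_X(a^\circ)\zeta$ lies in $X(\ell,k)^\perp$ (it begins with a letter in $A_k^\circ$, or is zero), while $\phi_X(E(a))\zeta$ again lies in $X(\ell,k)$, and moreover $\phi_X(E(a))$ preserves $X(\ell,k)$. Hence, whether $\sigma_i(P_k^\perp)$ equals $P_{(\ell,k)}-e_D$ or $P_{(\ell,k)}$, the $\phi_X(a^\circ)$‑term is annihilated after the outer compression and one gets $\sigma_i(P_k^\perp aP_k^\perp)=\phi_X(E(a))\,\sigma_i(P_k^\perp)=\sigma_i(E(a)P_k^\perp)$ for every $i$, so $P_k^\perp aP_k^\perp=E(a)P_k^\perp$ by faithfulness. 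The second identity follows formally: the first gives $aP_k^\perp=P_kaP_k^\perp+E(a)P_k^\perp$, i.e.\ $a^\circ P_k^\perp=P_kaP_k^\perp=P_k^\circ aP_k^\perp+e_{A_k}aP_k^\perp$ since $P_k=P_k^\circ+e_{A_k}$, so it remains to check $e_{A_k}aP_k^\perp=0$. This holds because $e_{A_k}$ is carried by $Y_k$; under $S_k\colon X(r,k)\otimes_DA_k\xrightarrow{\ \sim\ }Y_k$, the range of the $Y_k$‑part of $P_k^\perp$ consists of reduced words whose first and last letters avoid $A_k^\circ$ (tensored with $A_k$), $\phi_{Y_k}(a)$ sends such a word to a combination of reduced words of length $\geq1$ whose first letter avoids $A_k^\circ$ and of length $\geq2$, and $E_{A_k}$ (equivalently the Jones projection $e_{A_k}$) annihilates every such word.

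For (iii), let $\Phi_k\colon\lL(Y)\to A_k$ be the compression by the Jones projection $e_{A_k}$, so $\Phi_k(T)=\i<\eta_k,T\eta_k>$ under $e_{A_k}\lL(Y)e_{A_k}\cong\lL(\eta_kA_k)\cong A_k$. It is unital completely positive, hence of norm one, and the defining property of the GNS representation $(Y_k,\phi_{Y_k},\eta_k)$ of $E_{A_k}$ gives $\Phi_k(\phi_Y(a))=\i<\eta_k,\phi_{Y_k}(a)\eta_k>=E_{A_k}(a)$ for all $a\in A$. In particular $\Phi_k$ maps $\DT(A,E)$ into $A_k\subset A\subset\DT(A,E)$ and restricts to the identity on $A_k$, so $\Phi_k|_{\DT(A,E)}$ is a norm‑one projection onto the $\rmC^*$‑subalgebra $A_k$; by Tomiyama's theorem (a norm‑one projection onto a $\rmC^*$‑subalgebra is a conditional expectation) it is a conditional expectation, and it extends $E_{A_k}$ by the displayed formula.

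The only place that needs genuine care is the reduced‑word bookkeeping in (ii): making explicit the $Y_k$‑part of $P_k^\perp$ via $S_k$ — equivalently identifying $\sigma_k(e_{A_k})$ with the projection onto the copy of $X_k$ inside $X^{(k)}\cong X$ — and tracking how the syllable projections interact with $\phi_X(a)$. Everything else is formal, and I expect no real obstacle.
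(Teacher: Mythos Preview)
Your proof is correct. The paper itself omits the proof entirely, saying only that the proposition ``follows from the definition'', so there is nothing to compare against at the level of strategy; your write-up supplies precisely the routine verifications the paper leaves to the reader.

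A small remark on style: for (i) and (ii) you route everything through the faithful family $\bigoplus_i(X^{(i)},\sigma_i)$ and Eq.~(\ref{eq_sigma}), which is clean but slightly indirect. One can equally well work directly on $Y$ using the definition~(\ref{eq_P_k}): under $S_j$ the $Y_j$-component of $P_k$ is $(e_D\delta_{j,k}+P_{(\ell,k)}^\perp)\otimes 1$ on $X(r,j)\otimes_D A_j$, and the computations you do on $X$ go through verbatim on each summand. Either way the content is the same syllable bookkeeping. Your derivation of the second identity in (ii) from the first via $P_k=e_{A_k}+P_k^\circ$ and the vanishing $e_{A_k}aP_k^\perp=0$ is the right move, and your justification of that vanishing (reduced words in the range of $P_k^\perp|_{Y_k}$ have length $\geq 1$ with first letter outside $A_k^\circ$, hence are killed by $E_{A_k}$) is accurate. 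For (iii), invoking Tomiyama's theorem is a perfectly good way to package the argument; the compression is visibly a unital completely positive idempotent onto $A_k$, which is all that is needed.
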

%%%%%%%%%%%%%%%%%%%%%%%%%%%%
%
%
%
%%%%%%%%%%%%%%%%%%%%%%%%%%%%
\section{Pimsner algebras}\label{section_pimsner}
\subsection{Extensions associated with conditional expectations}\label{ss_semisplit}
Let $D\subset A$ be a unital inclusion of $\rmC^*$-algebras with conditional expectation $E\colon A \to D$,
$(X,\phi_X,\xi_0)$ be the GNS representation associated with $E$,
and $e_D =\theta_{\xi_0,\xi_0} \in \lK(X)$ be the Jones projection onto $\xi_0D$.
Note that $e_D\phi_X(a)e_D=\phi_X(E(a))e_D$ holds for $a\in A$.
Let $\calB$ be the universal unital $\rmC^*$-algebra generated by a unital copy of $A$ and a projection $e$
such that $e ae =E(a)e$ for $a\in A$.

\begin{lemma}\label{lem_univ_rep}
A unital $*$-homomorphism $\rho$ from $\calB$ into a unital $\rmC^*$-algebra
is injective if the restrictions $\rho|_A$ and $\rho|_{De}$ are injective and
$\rho(A)\cap \ospan \rho(AeA)=\{0\}$.
\end{lemma}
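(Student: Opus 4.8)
The statement concerns the universal C*-algebra $\calB$ generated by a unital copy of $A$ together with a projection $e$ satisfying $eae = E(a)e$. I want to show that a unital $*$-homomorphism $\rho\colon\calB\to B$ is injective as soon as $\rho|_A$ is injective, $\rho|_{De}$ is injective, and $\rho(A)\cap\ospan\rho(AeA)=\{0\}$. The structural fact I would establish first is that $\calB$ decomposes as a C*-algebraic extension: the ideal $\calK:=\ospan(AeA)$ sits inside $\calB$, the quotient $\calB/\calK$ is canonically $A$ (the copy of $A$ in $\calB$ maps isomorphically onto the quotient, because modulo $AeA$ the relation $eae=E(a)e$ becomes vacuous and $e\mapsto 0$, giving a splitting $A\to\calB\to A$), and $\calK$ is a ``matrix-like'' corner algebra. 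Concretely, $AeA$ is spanned by elements $aeb$, and the relation $(aeb)(ceb') = ae(bc)eb' = a E(bc) e b'$ shows that multiplication inside $\ospan(AeA)$ is completely governed by the $D$-valued inner product $(a,b)\mapsto E(a^*b)$; this identifies $\calK$ with $\lK(X)$ where $X$ is the GNS module of $E$, via $aeb\mapsto \theta_{\phi_X(a)\xi_0,\phi_X(b^*)\xi_0}$. (One must check this assignment is well-defined and extends to a $*$-isomorphism; the universal property of $\calB$ applied to the concrete representation on $X$ — where $A$ acts by $\phi_X$ and $e$ by $e_D$ — gives a surjection $\calB\to\rmC^*(\phi_X(A),e_D)$, and the identification of $\calK$ with $\lK(X)$ follows on that side and then pulls back.)

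Granting the extension $0\to\calK\to\calB\to A\to 0$ with $\calK\cong\lK(X)$, the injectivity argument is the standard two-step diagram chase. First, $\rho$ restricted to the ideal $\calK$ is injective: $\lK(X)$ has a well-understood ideal structure controlled by the ideals of $D$ (via $\lK(X)\cong$ the ``linking-type'' algebra with corner $D$ acting on $\xi_0 D$), and the hypothesis that $\rho|_{De}$ is injective forces $\rho$ to be faithful on the hereditary subalgebra $De\cdot\calK\cdot eD\cong D$, hence — since any nonzero ideal of $\lK(X)$ meets that corner nontrivially, using nondegeneracy of $E$ so that $\xi_0 D$ generates $X$ as a module — $\rho|_{\calK}$ is injective. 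Second, the hypothesis $\rho(A)\cap\ospan\rho(AeA)=\{0\}$ says exactly that $\rho(A)\cap\rho(\calK)=\{0\}$ in $B$. Combined with injectivity of $\rho|_A$ and of $\rho|_{\calK}$, a routine argument on extensions gives injectivity of $\rho$: if $x\in\ker\rho$, its image $\bar x$ in $A=\calB/\calK$ satisfies $\rho|_A(\bar x)=0$ hence $\bar x=0$ (identifying $A$ with the splitting copy and using $\rho(A)\cap\rho(\calK)=0$ to see the quotient map is compatible), so $x\in\calK$, and then $\rho|_{\calK}$ injective gives $x=0$.

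The main obstacle is the first step — the clean identification of $\calK=\ospan(AeA)$ inside the abstract $\calB$ with $\lK(X)$, and in particular checking that the map $aeb\mapsto\theta_{\phi_X(a)\xi_0,\phi_X(b^*)\xi_0}$ is well-defined (kernel of the corresponding surjection lands correctly) and that $\ospan(AeA)$ really is a closed two-sided ideal with quotient $A$. The relation $eae=E(a)e$ does the heavy lifting for products within $AeA$, but one should be careful that $AeA$ absorbs $A$ on both sides (which is immediate: $c\cdot(aeb)=(ca)eb$) and that the quotient map kills nothing more than $\calK$. A convenient shortcut is to run everything through the concrete GNS representation: the pair $(\phi_X, e_D)$ on $X$ satisfies the defining relation, $\rmC^*(\phi_X(A),e_D)$ is precisely $\lK(X)+\phi_X(A)$ (a semisplit extension of $A$ by $\lK(X)$), and one checks the canonical surjection $\calB\to\rmC^*(\phi_X(A),e_D)$ is an isomorphism by exhibiting enough of a back-map or by a universality/uniqueness argument — after which the lemma's hypotheses are manifestly the conditions for $\rho$ to factor the already-faithful concrete representation injectively. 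Once this identification is in hand, the rest is the elementary extension-injectivity lemma (faithful on ideal, faithful on quotient, images of the two pieces intersect trivially $\Rightarrow$ faithful).
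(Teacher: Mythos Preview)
Your core strategy is correct and matches the paper's: use the split extension $0\to\calK\to\calB\to A\to 0$ with $\calK=\ospan(AeA)$, show $\rho$ is faithful on $\calK$ because $e$ is a full projection there with corner $e\calK e=De$, and then combine this with injectivity on $A$ and the intersection hypothesis. The paper packages the same logic as a direct norm estimate $\|a+K\|\leq 3\|\rho(a+K)\|$ on the dense subspace $A+\lspan AeA$, citing \cite[Proposition~4.6.3]{Brown_Ozawa} for the full-corner step; this is your extension-injectivity lemma written quantitatively.

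Where you go wrong is in treating the identification $\calK\cong\lK(X)$ as a prerequisite. It is not needed: the relation $eae=E(a)e$ alone gives $e\calK e=De$, and $AeA=(Ae)e(eA)\subset\calK e\calK$ shows $e$ is full in $\calK$, so the full-corner argument applies abstractly without knowing what $\calK$ looks like. Your proposed ``shortcut'' of first proving that the surjection $\calB\to\rmC^*(\phi_X(A),e_D)$ is an isomorphism is circular: the paper uses precisely this lemma, immediately afterward, to identify $\calB$ with a concrete subalgebra (of $A\oplus\lL(X)$, not of $\lL(X)$ alone, exactly so as not to assume $\phi_X$ faithful). Likewise, your appeal to nondegeneracy of $E$ is misplaced --- it is neither assumed in this subsection nor required for the argument.
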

\begin{proof}
Since $A+\lspan AeA$ is norm dense in $\calB$,
it suffices to show that $\|a + K\|\leq 3\|\rho(a+K)\|$ for all $a \in A$ and $K\in \lspan AeA$.
By assumption, we observe that $\|a\| \leq \|\rho (a+K) \|$
and the restriction of $\rho$ to $\ospan AeA$ is isometric (see, e.g. \cite[Proposition 4.6.3]{Brown_Ozawa}).
Thus, we have $\|a+K\|\leq \|\rho(a)\| +\|\rho(K)\| \leq \|\rho(a+K) \| + \|\rho (a+K) - \rho(a)\| \leq
2\|\rho(a+K)\| +\|\rho(a)\|\leq 3 \|\rho(a+K) \|$.
\end{proof}
By the lemma, we may identify $\calB$ with the $\rmC^*$-subalgebra of $A\oplus\lL(X)$ generated by $0\oplus e_D$ and $\{a \oplus \phi_X(a) \mid a\in A\}$
and set $B:=(1-e)\calB (1-e) \subset A\oplus \lL(X^\circ)$.
Under the identifications $\ospan AeA \cong \lK(X)$ and $\ospan A^\circ eA^\circ \cong \lK(X^\circ)$,
we have the following commuting diagram
\[
\xymatrix %@R=1.2cm
{
0 \ar[r] & \lK(X) \ar[r] & \calB \ar[r] & A \ar[r] & 0\\
0 \ar[r] & \lK(X^\circ) \ar[u] \ar[r] & B \ar [u] \ar[r] & A \ar@{=}[u] \ar[r] & 0
}
\]
such that the upper exact sequence is split
and the lower one is semisplit with the UCP cross section $\Psi\colon A \ni a \mapsto (1-e)a(1-e) \in B$.
We call $\calB$ and $(B,\Psi)$ the \emph{split} and \emph{semisplit extensions associated with} $(D\subset A,E)$, respectively.

Note that  every element in $B$ is of the form $\Psi (a)  + K  $ for some $a \in A$ and $K \in \lK (X^\circ )$.
For any $a\in \phi_X^{-1}(\lK(X))$, one has $e_D^\perp \phi_X(a)e_D^\perp \in \lK(X^\circ)$.
Thus, we have $\phi_X^{-1}(\lK(X))\cong \phi_X^{-1}(\lK(X))\oplus 0 \subset B$.
Note that this ideal is the kernel of the natural action of $B$ on $X^\circ$ via the surjection $A\oplus \lL(X^\circ) \to \lL(X^\circ)$.
The proof of the following lemma is easy, so we omit it.
\begin{lemma}\label{lem_Xcirc}
There exists an isometric linear map $t^\circ \colon X^\circ \to (1-e)\calB e$ such that
\begin{itemize}
\item $t^\circ (a \xi_0) =ae =(1-e)ae$ for $a\in A^\circ$;
\item $t^\circ (\xi)^*t^\circ(\eta)=\i<\xi,\eta>e$ for $\xi,\eta \in X^\circ$;
\item $t^\circ (b \xi d ) = b t^\circ(\xi) d$ for $b \in B, \xi \in X^\circ$ and $d \in D$. 
\end{itemize}
\end{lemma}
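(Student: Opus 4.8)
The plan is to construct the map $t^\circ$ explicitly using the creation-operator picture on the GNS module $X^\circ$ and the identification $\ospan A^\circ e A^\circ \cong \lK(X^\circ)$ already recorded in the diagram. First I would note that $X^\circ = e_D^\perp X$ carries a left $D$-action (restriction of $\phi_X$), a right $D$-action, and that the closed span of $\{a\xi_0 \mid a \in A^\circ\}$ is all of $X^\circ$ by definition of the GNS construction. For $a \in A^\circ$ set $t^\circ(a\xi_0) := ae = (1-e)ae$, where the second equality holds because $e a e = E(a)e = 0$ for $a \in A^\circ$, so $eae=0$ and hence $ae = (1-e)ae + eae = (1-e)ae$. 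To see that this is well-defined and isometric, I would compute, for $a,b \in A^\circ$,
\[
(ae)^*(be) = e a^* b e = E(a^*b)e = \langle a\xi_0, b\xi_0\rangle\, e,
\]
using the defining relation $exe = E(x)e$ of $\calB$ with $x = a^*b$. Since $\|\langle\xi,\xi\rangle e\| = \|\langle\xi,\xi\rangle\|$ in $D \subset \calB$ (the unital copy of $D$ sits in $\calB$ faithfully, and $d \mapsto de$ is isometric on $D$ as in Lemma~\ref{lem_univ_rep}), this shows $\|a\xi_0\|^2 = \|ae\|^2$, so $t^\circ$ extends to a well-defined isometry $X^\circ \to (1-e)\calB$; that its range lies in $(1-e)\calB e$ follows since $ae = ae\cdot e$ and $ae = (1-e)ae$.

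Next I would verify the three bulleted properties. The first is immediate from the definition. The second, $t^\circ(\xi)^* t^\circ(\eta) = \langle\xi,\eta\rangle e$, holds for $\xi = a\xi_0$, $\eta = b\xi_0$ by the displayed computation above, and then passes to the closed span by continuity of multiplication and the fact that both sides are norm-continuous in $\xi,\eta$. For the third, $D$-bimodule-type compatibility $t^\circ(b\xi d) = b\, t^\circ(\xi)\, d$ for $b \in B$, $\xi \in X^\circ$, $d \in D$: I would first treat $\xi = a\xi_0$ with $a \in A^\circ$ and $d \in D$. Then $\xi d = (ad)\xi_0$ where $ad \in A^\circ$ (indeed $E(ad)=E(a)d=0$), so $t^\circ(\xi d) = (ad)e = a(de)$; but $de = ed$ in $\calB$ since $e$ commutes with $D$ (from $e d e = E(d)e = de$ and taking adjoints, or directly from $e_D$ commuting with $\phi_X(D)$), hence $t^\circ(\xi d) = ae\cdot d = t^\circ(\xi)d$. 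For the left action of $b \in B$: every $b \in B$ has the form $\Psi(a') + K$ with $a' \in A$, $K \in \lK(X^\circ) \cong \ospan A^\circ e A^\circ$, and it suffices to check $b = (1-e)a'(1-e)$ with $a' \in A$ and $b = ce$ with $c \in A^\circ$ (rank-one-type generators, since $(1-e)A^\circ e A^\circ(1-e)$ is spanned by such after multiplying out). In the first case $(1-e)a'(1-e)\cdot ae = (1-e)a'ae$ (because $(1-e)\cdot a = (1-e)a$ and then $(1-e)a' \cdot (1-e) \cdot ae$; one must be a little careful that $(1-e)ae = ae$ already so the middle $(1-e)$ is absorbed), and this equals $t^\circ((1-e)a'(1-e)\cdot a\xi_0)$ once one checks the left $B$-action on $X^\circ$ matches $\phi_X$ restricted to $e_D^\perp$, which is exactly how the correspondence structure on $X^\circ$ was set up in the diagram preceding the lemma. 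The $K = ce$ case reduces to the relation $t^\circ(\xi)^* t^\circ(\eta) = \langle\xi,\eta\rangle e$ and the computation $ce \cdot t^\circ(a\xi_0) = ce\cdot ae$, matching $t^\circ$ applied to the rank-one operator $\theta_{c\xi_0, \cdot}$ acting on $a\xi_0$.

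The main obstacle I anticipate is purely bookkeeping: correctly matching the abstractly-defined left action of $B \subset A \oplus \lL(X^\circ)$ on the correspondence $X^\circ$ with concatenation inside $\calB$, i.e. checking that multiplying $t^\circ(\xi)$ on the left by an element of $B$ inside $\calB$ reproduces the module action, rather than some twisted version. This is where the two descriptions of $X^\circ$ — as $e_D^\perp X$ with the compressed $\phi_X$, and as $\ospan A^\circ e A^\circ \cong \lK(X^\circ)$ sitting in $B$ — have to be reconciled, and one has to keep straight when a factor of $(1-e)$ is or is not automatically present. Once the generators $\{ae \mid a \in A^\circ\}$ are shown to span, to have the right inner product, and to transform correctly under $D$ on the right and $A$ (hence $B$) on the left, continuity and density finish the argument; none of these steps is deep, which is why the paper omits the proof. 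I would write it out by (1) defining $t^\circ$ on $\{a\xi_0\}$, (2) the inner-product computation giving isometry and well-definedness, (3) extension by continuity, (4) the three bulleted identities on generators, (5) passage to closures.
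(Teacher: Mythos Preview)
Your approach is correct and is precisely the straightforward verification the paper has in mind when it omits the proof as ``easy'': define $t^\circ$ on the dense set $\{a\xi_0 : a\in A^\circ\}$, use $(ae)^*(be)=E(a^*b)e$ for well-definedness and isometry, extend by continuity, and check the bimodule relations on generators. One small slip worth fixing: the compact generators of $\lK(X^\circ)\subset B$ are of the form $cec'^*$ with $c,c'\in A^\circ$, not ``$b=ce$'' (which lies in $(1-e)\calB e$, not in $B=(1-e)\calB(1-e)$); with that correction the left-$B$-action check reads $cec'^*\cdot ae = c\,E(c'^*a)\,e = t^\circ(\theta_{c\xi_0,c'\xi_0}(a\xi_0))$, and your argument goes through unchanged.
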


\begin{remark}\label{rem_group}
Assume that $(D\subset A, E)$ comes from the reduced group $\rmC^*$-algebras of discrete groups $\Lambda \leq \Gamma$.
Then, $[\Lambda:\Gamma]=\infty$ if and only if
$\phi_X^{-1}(\lK(X)) =\{0\}$.
Indeed, if $[\Lambda:\Gamma]<\infty$, then one has $A=\lK(X)$.
Note that this the case when $\calB = A \oplus \lK (X)$ and $B= A \oplus \lK(X^\circ)$.
Conversely, if $x \in \phi_X^{-1}(\lK(X))$ is nonzero,
then $x \otimes 1 \in \lK (X\otimes_D A)$ is also nonzero.
By the natural isomorphism $\lK (X\otimes_D A) \cong c_0 (\Gamma/\Lambda)\rtimes_\red \Gamma$,
we have $(1 \otimes \rmC^*_\red (\Gamma )) \cap (c_0 (\Gamma/\Lambda)\rtimes_\red \Gamma) \neq \{0 \}$.
This implies that $c_0 (\Gamma / \Lambda)$ is unital, so we have $[\Gamma :\Lambda] <\infty$.
Note that when $\phi_X(A)\cap\lK(X)= \{0\}$,
we have $\calB \cong \rmC^* ( \phi_X(A), e_D) \cong (\lC 1 +c_0 (\Gamma /\Lambda ) )\rtimes_\red \Gamma$.
\end{remark}

Consider the embedding maps $\lK(X^\circ) \hookrightarrow \lK (X)$ and $D \hookrightarrow De_D \subset \lK (X)$ into corners.
Recall that $D$ is said to have the \emph{completely bounded approximation property} (\emph{CBAP})
if there exist a constant $C>0$ and a net of finite rank CB maps $\varphi_i$ on $D$ such that $\| \varphi_i \|_\cb \leq C$
and $\lim_i \| \varphi_i (x) - x \| =0$ for $x\in D$.
The \emph{Haagerup constant} $\Lambda_\cb(D)$ is the infimum of those $C$ for which $(\varphi_i)_i$ exists.
When $D$ does not have the CBAP, we set $\Lambda_\cb(D)=\infty$.

\begin{lemma}\label{lem_cbap}
It follows that $\Lambda_\cb (\lK(X^\circ)) \leq \Lambda_\cb(D)$. 
\end{lemma}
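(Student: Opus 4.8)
The plan is to realize $\lK(X^\circ)$ as a hereditary subalgebra of a $\rmC^*$-algebra whose Haagerup constant equals $\Lambda_\cb(D)$, and then to invoke the routine fact that $\Lambda_\cb$ never increases on passing to a hereditary subalgebra: if $\varphi_i$ are finite-rank CB maps on the ambient algebra with $\|\varphi_i\|_\cb\le C$ converging pointwise to the identity, and $(u_\lambda)$ is an approximate unit of the hereditary subalgebra, then the compressions $x\mapsto u_\lambda\varphi_i(u_\lambda x u_\lambda)u_\lambda$ are finite-rank, have $\cb$-norm $\le C$, take values in the subalgebra, and converge pointwise to its identity after reindexing.

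First I would note that $X^\circ=e_D^\perp X$, so $\lK(X^\circ)$ is exactly the corner $e_D^\perp\lK(X)e_D^\perp$ of $\lK(X)$ (via $\theta_{\xi,\eta}\mapsto\theta_{e_D^\perp\xi,\,e_D^\perp\eta}$), whence $\Lambda_\cb(\lK(X^\circ))\le\Lambda_\cb(\lK(X))$. It then remains to show $\Lambda_\cb(\lK(X))\le\Lambda_\cb(D)$, and here the natural route, matching the ``corner'' picture indicated just before the lemma, is that the Jones projection $e_D=\theta_{\xi_0,\xi_0}$ is a \emph{full} projection of $\lK(X)$: since $\langle\xi_0,\xi_0\rangle=1$ one has $\theta_{\xi,\mu}=\theta_{\xi,\xi_0}\,e_D\,\theta_{\xi_0,\mu}$ for all $\xi,\mu\in X$, so the closed ideal generated by $e_D$ is all of $\lK(X)$. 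Hence $D\cong De_D=e_D\lK(X)e_D$ is a full corner of $\lK(X)$, so $\lK(X)$ is strongly Morita equivalent to $D$ and $\Lambda_\cb(\lK(X))=\Lambda_\cb(D)$ by Morita invariance of the Haagerup constant. Combining the two inequalities finishes the proof.

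The one delicate point is precisely the inequality $\Lambda_\cb(\lK(X))\le\Lambda_\cb(D)$, i.e.\ transporting the approximating maps \emph{up} from the corner $D$ to $\lK(X)$. For separable $D$ this is immediate from Brown--Green--Rieffel together with the stability $\Lambda_\cb(B\otimes\lK(H))=\Lambda_\cb(B)$; for arbitrary $D$ one may sidestep the Morita machinery by using Kasparov's stabilization theorem over the unital (hence $\sigma$-unital) algebra $D$ to obtain, for a suitable infinite cardinal $\kappa$, an isomorphism $X^\circ\oplus\ell^2_\kappa(D)\cong\ell^2_\kappa(D)$ of the standard module $\ell^2_\kappa(D)=\bigoplus_{\kappa}D$. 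Then $\lK(X^\circ)$ is directly a hereditary subalgebra of $\lK(\ell^2_\kappa(D))\cong D\otimes\lK(\ell^2(\kappa))$, and hereditary monotonicity together with $\Lambda_\cb(D\otimes\lK(\ell^2(\kappa)))=\Lambda_\cb(D)$ yields the conclusion with no reference to $X$ or $e_D$ at all. I expect this upward transfer — equivalently, the Morita invariance of the Haagerup constant — to be the only real obstacle; the corner identifications and the fullness of $e_D$ are bookkeeping.
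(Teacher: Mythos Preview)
Your argument is correct and shares its opening move with the paper: both pass from $\lK(X^\circ)$ to $\lK(X)$ via the hereditary corner $e_D^\perp\lK(X)e_D^\perp$, and both ultimately rely on $e_D$ being full in $\lK(X)$. The difference lies in how $\Lambda_\cb(\lK(X))\le\Lambda_\cb(D)$ is established. You invoke Morita invariance of the Haagerup constant (or, in the nonseparable case, a cardinal-indexed Kasparov stabilization) as a black box; the paper instead builds the needed factorization by hand: it stabilizes by $\lK$, picks an approximate unit $(a_i\otimes p_i)$ of $\lK(X)\otimes\lK$, reduces each $a_i$ to a separable sub-Hilbert-module $X_i$ over a separable $D_i\subset D$ via \cite[Lemma~B.2]{Katsura}, and then uses R{\o}rdam's lemma \cite[Lemma~2.5]{Rordam} in the full corner $D_ie_D\otimes\lK\subset\lK(X_i)\otimes\lK$ to produce an element $d_i$ with $d_id_i^*\in De_D\otimes\lK$ and $d_i^*d_i$ close to $a_i\otimes p_i$, yielding explicit CP contractions $x\mapsto d_ixd_i^*$ and $y\mapsto d_i^*yd_i$ that factor through $D\otimes\lK$. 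Your route is shorter and more conceptual provided one is willing to cite Morita invariance of $\Lambda_\cb$ (or the uncountable stabilization theorem, which is not the textbook Kasparov statement); the paper's route is self-contained, handles nonseparable $D$ without appealing to any nonstandard stabilization, and makes the local factorization through $D$ completely explicit.
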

\begin{proof}
Since $\lK(X^\circ)$ is a hereditary subalgebra of $\lK(X)$, we have $\Lambda_\cb(\lK(X^\circ))\leq \Lambda_\cb (\lK(X))$.
Take an approximate unit $(a_i \otimes p_i)$ of $\lK(X)\otimes\lK$.
It suffices to show that for any $i$ and any $\varepsilon>0$,
there exist CP contractions
$\varphi_i \colon \lK(X)\otimes \lK\to D\otimes\lK$ and $\psi_i\colon D\otimes\lK\to\lK(X)\otimes\lK$
such that $\| (a_i\otimes p_i)x(a_i\otimes p_i) -  \psi_i\circ\varphi_i(x) \| < \varepsilon \|x\|$ for
$x\in \lK(X)\otimes\lK$.
For each $i$,
by \cite[Lemma B.2]{Katsura}
we find a separable closed subspace $X_i\subset X$ with $\xi_0\in X_i$
which naturally forms a Hilbert $\rmC^*$-module over a separable $\rmC^*$-subalgebra $D_i \subset D$
such that $a_i \in \lK(X_i)$.
Since $D_i e_D \otimes \lK $ is a full corner of $\lK(X_i)\otimes\lK$,
it follows from \cite[Lemma 2.5]{Rordam} that
there exists $d_i \in \lK(X_i)\otimes\lK$ such that
$\| d_i^*d_i - a_i \otimes p_i \| <\varepsilon$ and $d_id_i^* \in De_D\otimes\lK$.
Then, CP contractions $\varphi_i(x)= d_i x d_i^*$ and $\psi_i (x)= d_i^*xd_i$ are the desired ones.
\end{proof}
\subsection{Cuntz--Pimsner algebras}
Let $(A, E ) = \bigast_D (A_k ,E_k )$ be the reduced amalgamated free product of $\{(D\subset A_k,E_k)\}_{k\in\calI}$.
Let $\calB_k \subset A_k\oplus \lK(X_k)$ and $B_k \subset A_k\oplus \lK(X_k^\circ)$
be the split and semisplit extension associated with $(D \subset A_k, E_k )$ as in \S\S\ref{ss_semisplit}.
For the UCP section $\Psi_k\colon A_k \to B_k$, we
consider the unital embedding $\Psi \colon D \to \prod_{k\in\calI}B_k; d \mapsto (\Psi_k(d))_{k\in\calI}$,
and the $\rmC^*$-algebra $B:= \bigoplus_{k\in\calI}B_k + \Psi (D)$.
We denote the support projection of $B_k$ in $B$ by $1_{B_k}$ and set $B_k^\perp = 1_{B_k}^\perp B$.
Define the Hilbert $B$-module $\frakX$ by $\bigoplus_{k\in \calI} X_k^\circ\otimes_D B_k^\perp$,
where the interior tensor product $X_k\otimes_D B_k^\perp$ is with respect to
$D \ni d \mapsto \Psi(d)1_{B_k}^\perp \in B_k^\perp$.
The left action $\phi_\frakX \colon B \to \lL (\frakX )$ is defined by the direct sum of the compositions
\[
B_k \hookrightarrow A_k\oplus \lL(X_k^\circ) \to \lL(X_k^\circ) \to \lL(X_k^\circ \otimes_D B_k^\perp).
\]
We set $\xi_{k\ol{k}}:=\xi_k \otimes 1_{B_k}^\perp\in\frakX$
and $I_k:=\phi_{X_k}^{-1}(\lK(X_k))$.
We may use the identifications $I_k \cong I_k\oplus 0 \subset B_k$ and
$\lK(X_k^\circ) \cong 0\oplus \lK(X_k^\circ) \subset B_k$.
Note that in the simplest case when $\calI=\{1,2\}$,
one has $B =B_1 \oplus B_2$ and $\frakX = (X_1^\circ \otimes_D B_2)\boxplus (X_2^\circ\otimes_D B_1)$.

\begin{lemma}\label{lem_J_X}
It follows that $\ker \phi_\frakX \supset \bigoplus_{k\in\calI}I_k$ and
$J_\frakX = \bigoplus_{k\in\calI} \lK (X^\circ_k )$.
\end{lemma}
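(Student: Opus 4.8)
The claim splits into two parts: the inclusion $\ker\phi_\frakX\supset\bigoplus_{k\in\calI}I_k$ and the identification $J_\frakX=\bigoplus_{k\in\calI}\lK(X_k^\circ)$. For the first part, fix $k$ and recall that under $I_k\cong I_k\oplus 0\subset B_k\subset A_k\oplus\lL(X_k^\circ)$, an element $a\in I_k$ sits in the $A_k$-summand with zero $\lL(X_k^\circ)$-component. By the definition of $\phi_\frakX$, the $k$-th block of $\phi_\frakX$ factors through the map $B_k\to\lL(X_k^\circ)$, which kills $a$; the $j$-th block for $j\neq k$ factors through $B_j\to\lL(X_j^\circ)$ and kills $a$ as well since $a\in B_k$ has zero component in $B_j$ (here one uses $B=\bigoplus_k B_k+\Psi(D)$ and $a\in\ker E_k\subset A_k^\circ$, which is orthogonal to the diagonal $\Psi(D)$ except through its $D$-part—more precisely, $I_k\cap D$ may be nonzero, but elements of $I_k\oplus 0\subset B_k$ have zero $1_{B_j}$-component for $j\neq k$). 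So $\phi_\frakX(a)=0$.

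For the inclusion $J_\frakX\supset\bigoplus_k\lK(X_k^\circ)$, note that under $\lK(X_k^\circ)\cong 0\oplus\lK(X_k^\circ)\subset B_k$, such an element acts on $\frakX$ only through its $k$-th block $X_k^\circ\otimes_D B_k^\perp$, via $\phi_{X_k^\circ}\otimes 1$ where $\phi_{X_k^\circ}$ is the identity representation of $\lK(X_k^\circ)$ on $X_k^\circ$. A rank-one operator $\theta_{\xi,\eta}$ with $\xi,\eta\in X_k^\circ$ then goes to $\theta_{\xi\otimes 1_{B_k}^\perp,\eta\otimes 1_{B_k}^\perp}\in\lK(\frakX)$, so $\lK(X_k^\circ)\subset\phi_\frakX^{-1}(\lK(\frakX))$; and since $\lK(X_k^\circ)$ annihilates $\bigoplus_k I_k\supset\ker\phi_\frakX$ (the blocks being mutually orthogonal as summands of $B$), we get $\lK(X_k^\circ)\subset(\ker\phi_\frakX)^\perp$, hence $\lK(X_k^\circ)\subset J_\frakX$.

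The reverse inclusion $J_\frakX\subset\bigoplus_k\lK(X_k^\circ)$ is the main obstacle. Take $b\in J_\frakX\subset B$ and write $b=\bigoplus_k b_k+\Psi(d)1_{\ker}$ with $b_k\in B_k$ and $d\in D$ (using $B=\bigoplus_k B_k+\Psi(D)$; the decomposition is not unique, but one can normalize). First, since $b\in(\ker\phi_\frakX)^\perp$ and $\ker\phi_\frakX\supset\bigoplus_k I_k\supset\bigoplus_k(I_k\cap\text{image of }D)$, analyzing $b$ against the $D$-diagonal forces the $\Psi(D)$-contribution of $b$ to vanish, so $b=\bigoplus_k b_k$ and it suffices to treat each $b_k\in B_k\cap J_\frakX$ separately. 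Writing $b_k=(\Psi_k(a_k)+K_k)$ with $a_k\in A_k$, $K_k\in\lK(X_k^\circ)$ as in \S\ref{ss_semisplit}, the condition $\phi_\frakX(b_k)\in\lK(\frakX)$ means $(\phi_{X_k^\circ}(a_k)+K_k)\otimes 1\in\lK(X_k^\circ\otimes_D B_k^\perp)$ where the tensor is over $\Psi(d)\mapsto\Psi(d)1_{B_k}^\perp$. Since the correspondence $B_k^\perp$ over $D$ is injective on $D$ (as $\Psi_j$ is injective for $j\neq k$, and $B_k^\perp$ contains the $B_j$'s), Lemma \ref{lem_cpt_tensor} applies and gives $\phi_{X_k^\circ}(a_k)+K_k\in\lK(X_k^\circ)$, hence $\phi_{X_k^\circ}(a_k)\in\lK(X_k^\circ)$, i.e.\ $a_k\in\phi_{X_k^\circ}^{-1}(\lK(X_k^\circ))$. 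Finally, the orthogonality condition $b_k\perp\ker\phi_\frakX\supset I_k$, combined with the fact that $\phi_{X_k^\circ}^{-1}(\lK(X_k^\circ))$ differs from $I_k=\phi_{X_k}^{-1}(\lK(X_k))$ exactly by the part that acts compactly only after removing $\xi_k D$, forces $a_k=0$: concretely, if $a_k\neq 0$ with $\phi_{X_k^\circ}(a_k)$ compact, then its image $a_k\oplus 0\in I_k\oplus 0$ is nonzero in $B_k$ (here one checks $\phi_{X_k^\circ}(a_k)$ compact together with $E_k(a_k)$ controlling the $\xi_k D$-direction gives $\phi_{X_k}(a_k)\in\lK(X_k)$), contradicting $b_k\perp I_k$ unless the $A_k$-component of $b_k$ is zero. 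Thus $b_k=K_k\in\lK(X_k^\circ)$, completing the proof. The delicate point throughout is keeping straight the three avatars of each ideal—$I_k$ inside $A_k$, inside $B_k$, and its interaction with the compacts $\lK(X_k^\circ)$—and verifying the injectivity hypothesis needed to invoke Lemma \ref{lem_cpt_tensor}.
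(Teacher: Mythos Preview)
Your overall strategy matches the paper's: use Lemma \ref{lem_cpt_tensor} together with orthogonality to $I_k$ to force the $A_k$-part of any $b\in J_\frakX$ to vanish. However, there is a genuine logical error in your argument for the inclusion $\bigoplus_k\lK(X_k^\circ)\subset J_\frakX$. You write ``since $\lK(X_k^\circ)$ annihilates $\bigoplus_k I_k\supset\ker\phi_\frakX$'', but this inclusion points the wrong way: the first part of the lemma establishes $\bigoplus_k I_k\subset\ker\phi_\frakX$, not $\supset$. Annihilating a \emph{subset} of $\ker\phi_\frakX$ does not yield membership in $(\ker\phi_\frakX)^\perp$. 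What you actually need is that $\ker\phi_\frakX\cap B_k=I_k\oplus 0$ exactly; this holds because the left action $D\to\lL(B_k^\perp)$, $d\mapsto\Psi(d)1_{B_k}^\perp$, is injective (each $\Psi_j$ is injective on $D$), so $(0\oplus K)\cdot y_k=0$ for any $y_k$ in the $B_k$-cut of the kernel.

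For the reverse inclusion, your detour through ``forcing the $\Psi(D)$-contribution to vanish'' is both unjustified and unnecessary. The paper simply writes an arbitrary $x\in J_\frakX\subset\prod_k B_k$ as $x=(\Psi_k(a_k)+K_k)_k$ with $a_k\in A_k$, $K_k\in\lK(X_k^\circ)$, without any preliminary reduction. Lemma \ref{lem_cpt_tensor} (again using injectivity of $D\to B_k^\perp$) gives $\phi_{X_k}(a_k)\in\lK(X_k)$, hence $a_k\in I_k$; then $J_\frakX\subset(I_k)^\perp$ forces $(\Psi_k(a_k)+K_k)(a_k^*\oplus 0)=a_ka_k^*\oplus 0=0$, so $a_k=0$. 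Your argument eventually reaches the same conclusion, but the intermediate step about the diagonal $\Psi(D)$-part is not needed and the claimed reduction is never carried out.
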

\begin{proof}	
The first assertion is trivial.
Take $x \in J_\frakX$ arbitrarily.
Then there exist $a_k\in A_k$ and $K_k \in \lK(X_k^\circ)$ such that $x = (\Psi_k (a_k )+K_k)_{k\in\calI}$.
For each $k\in\calI$, it follows from Lemma \ref{lem_cpt_tensor} that $\phi_{X_k}(a_k ) \in \lK (X_k)$,
and so $a_k  \in I_k$.
Since $J_\frakX \subset (\ker \phi_\frakX)^\perp \subset (I_k)^\perp$ holds,
we obtain $(\Psi_k (a_k )+K_k)(a_k\oplus 0) =0$, implying $a_k =0$.
Since $k\in\calI$ is arbitrary,
we have $x= (K_k)_k \in \bigoplus_k \lK (X_k^\circ)$.
The opposite inclusion follows from that $\phi_\frakX ( \theta_{a \xi_k, b \xi_k } ) = \theta_{a \xi_{k\ol{k}}, b \xi_{k \ol{k}}}$ for $a, b \in A_k^\circ$ and $k \in \calI$.
\end{proof}

\begin{theorem}\label{prop_cuntz_pimsner}
There exists a universal covariant representation $(\pi, t )$ of $\frakX$ on $\Delta\bfT(A,E)$ such that $\pi ( \Psi_k (a)) = P_k a P_k$ for $a \in A_k$, $ t ( b \xi_{k \ol{k}} ) = b P_{k}^\perp$ for $b \in A_k^\circ$ and $k \in \calI$, and $\rmC^* ( \pi, t ) = \Delta\bfT(A,E)$.
\end{theorem}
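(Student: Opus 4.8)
The plan is to build an explicit covariant representation of $\frakX$ on $\DT(A,E)$ and then invoke the gauge-invariant uniqueness theorem (Theorem \ref{thm_GIU}) to see that it generates all of $\calO(\frakX)$, hence that the two $\rmC^*$-algebras coincide. First I would define $\pi \colon B \to \DT(A,E)$ on each block $B_k$ by $\pi|_{B_k}$ sending $\Psi_k(a) + K$ (with $a \in A_k$, $K \in \lK(X_k^\circ)$) to $P_k a P_k + \psi(K)$, where $\psi$ is the $*$-homomorphism coming from the identification $\lK(X_k^\circ) \cong \ospan A_k^\circ e_{A_k} A_k^\circ$; this is well-defined because Proposition \ref{prop_proj}(ii) gives $P_k^\perp a P_k^\perp = E(a)P_k^\perp$, which is exactly the defining relation of the corner $(1-e)\calB_k(1-e) = B_k$ realized via $P_k$ in place of $e$. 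Compatibility on $\Psi(D)$ follows from Proposition \ref{prop_proj}(i). For $t$, I would use Lemma \ref{lem_Xcirc}: the map $t^\circ_k \colon X_k^\circ \to (1-e)\calB_k e$ transports to $a\xi_k \mapsto a P_k^\perp$ for $a \in A_k^\circ$, and then $t(\xi \otimes b) := t^\circ_k(\xi)\,\pi(b)$ for $\xi \in X_k^\circ$, $b \in B_k^\perp$ defines $t$ on $X_k^\circ \otimes_D B_k^\perp$; taking the direct sum over $k$ gives $t \colon \frakX \to \DT(A,E)$. The representation identities $t(\xi)^*t(\eta) = \pi(\langle \xi,\eta\rangle)$ and $\pi(a)t(\xi)\pi(b) = t(\phi_\frakX(a)\xi b)$ are then routine verifications using the bullets of Lemma \ref{lem_Xcirc} together with Proposition \ref{prop_proj}(ii), which controls how $A_k^\circ$ and $P_k$ interact.

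Next I would check covariance, i.e.\ that $\pi = \psi_t \circ \phi_\frakX$ on $J_\frakX$. By Lemma \ref{lem_J_X} we have $J_\frakX = \bigoplus_{k\in\calI}\lK(X_k^\circ)$, and on a generator $\theta_{a\xi_k, b\xi_k}$ with $a,b \in A_k^\circ$ one computes $\psi_t(\theta_{a\xi_k,b\xi_k}) = t(a\xi_{k\ol k})t(b\xi_{k\ol k})^* = aP_k^\perp P_k^\perp b^* = a P_k^\perp b^*$, while $\phi_\frakX(\theta_{a\xi_k,b\xi_k})$ corresponds to the element $K = \theta_{a\xi_k^\circ, b\xi_k^\circ} \in \lK(X_k^\circ) \subset B_k$, and $\pi(K) = \psi(K) = a P_k^\perp b^*$ by the definition of $\psi$ via the $A_k^\circ e_{A_k} A_k^\circ$ picture combined with Proposition \ref{prop_proj}(ii) (which gives $a^\circ P_k^\perp = P_k^\circ a P_k^\perp$, so $a P_k^\perp b^* = a e_{A_k} b^* + (\text{term in }P_k^\circ)$ matching the decomposition of $K$). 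So covariance holds. That $\rmC^*(\pi,t) = \DT(A,E)$ is then clear: the right side of the last displayed equations recovers $P_k a P_k$ and $a P_k^\perp$ for all $a$, and since $a = \sum_k P_k a P_k + (\text{cross terms built from } a P_k^\perp \text{ and their adjoints})$ — using $\sum_k P_k = $ something one must identify, and the cross-term relations $P_j a P_k = P_j a^\circ P_k$ from Proposition \ref{prop_proj}(ii) — one generates $A$ and the $P_k$, hence all of $\DT(A,E)$; conversely everything in $\rmC^*(\pi,t)$ lies in $\DT(A,E)$ by construction.

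Finally, universality. By the second part of Theorem \ref{thm_GIU}, it suffices to show that $\pi$ is injective and that $(\pi,t)$ admits a gauge action. For injectivity of $\pi$ I would use the faithful representation $\bigoplus_{k\in\calI}(X^{(k)},\sigma_k)$ of $\DT(A,E)$ from \S\S\ref{ss_general}, together with Eq.\,(\ref{eq_sigma}): on the block $B_k$ the composite $\sigma_k \circ \pi|_{B_k}$ is (up to the unitary $S_k$-type identification $Y_k \otimes_{A_k} X_k \cong X$) the standard faithful representation of $B_k \subset A_k \oplus \lL(X_k^\circ)$ on $X_k$, so injectivity of $\pi$ follows block-by-block from the faithfulness of the GNS data and the nondegeneracy hypothesis on the $E_k$; one also needs that $\pi$ separates the blocks, which again follows from Eq.\,(\ref{eq_sigma}) since $\sigma_k(P_k) \neq \sigma_j(P_k)$ distinguishes indices. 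For the gauge action, define $\gamma_z$ on the generators by $\gamma_z(\pi(b)) = \pi(b)$ and $\gamma_z(t(\xi)) = z\,t(\xi)$; this is consistent with all the defining relations (they are homogeneous of the right degrees), and it extends to a continuous $\lT$-action because it is implemented by conjugation by a diagonal unitary on the Fock-type module underlying the standard representation — concretely, on $\bigoplus_k X^{(k)}$ one checks $P_k^\perp$ carries degree $1$ and $A$ degree $0$, so $\Ad$ of the appropriate unitary does the job. The main obstacle I expect is the bookkeeping in showing $\pi$ is faithful and in pinning down precisely how $A$ and $\{P_k\}$ are reconstructed from $\{P_k a P_k\}$ and $\{aP_k^\perp\}$ (i.e.\ the exact form of $\sum_k P_k$ and the cross-term identities); everything else is a homogeneous relation check that the gauge-invariant uniqueness theorem then turns into the isomorphism $\calO(\frakX) \cong \DT(A,E)$.
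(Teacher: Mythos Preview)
Your strategy matches the paper's: build $(\pi,t)$ from the corner realization of $B_k$ and Lemma~\ref{lem_Xcirc}, verify covariance via Lemma~\ref{lem_J_X}, and invoke Theorem~\ref{thm_GIU}. The paper resolves your self-identified obstacles more cleanly than you sketch. For injectivity of $\pi$, rather than analysing $\sigma_k\circ\pi|_{B_k}$, the paper notes that $e_{A_k}A_k(1-P_k)A_ke_{A_k}=\{0\}$ (so the conditional expectation of Proposition~\ref{prop_proj}(iii) annihilates $\ospan A_k(1-P_k)A_k$) and applies Lemma~\ref{lem_univ_rep} to get an injective $\rho_k\colon\calB_k\to\DT(A,E)$ with $\rho_k(e_k)=1-P_k$; then $\pi=\bigoplus_k\rho_k|_{B_k}$ is automatically injective. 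For the gauge action, the paper works directly on $Y$, where $\DT(A,E)$ is concretely defined: with $Q_n\in\lL(Y)$ the projection onto the span of $a_1\cdots a_n\eta_k$ for reduced words with $a_n\notin A_k$, the unitary $U_z=\bigoplus_{n\ge0}z^nQ_n$ satisfies $\Ad U_z\circ\pi=\pi$ and $\Ad U_z\circ t=zt$. Your proposed implementation on $\bigboxplus_k X^{(k)}$ via the word-length grading does not work as written: under that grading $\phi_X(A)=\sigma_k(A)$ is \emph{not} homogeneous of degree zero (for $a\in A_k^\circ$ already $\phi_X(a)\xi_0$ has degree $1$), and $P_k^\perp$ is a projection, hence degree zero, not one. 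The grading that does work on $X^{(k)}=Y_k\otimes_{A_k}X_k$ is $(Q_n|_{Y_k})\otimes 1$, i.e.\ the one transported from $Y$---which is exactly the paper's construction. Finally, the inclusion $\DT(A,E)\subset\rmC^*(\pi,t)$ follows from the single decomposition $a=P_kaP_k+P_ka^\circ P_k^\perp+P_k^\perp a^\circ P_k+E(a)P_k^\perp$ for $a\in A_k$ (Proposition~\ref{prop_proj}(ii)); note that there is no relation of the form $\sum_kP_k=1$ for general $\calI$.
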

\begin{proof}
For any $k\in\calI$, we have $e_{A_k}A_k(1-P_k)A_k e_{A_k}=\{0\}$.
Thus, by Lemma \ref{lem_univ_rep} and Proposition \ref{prop_proj}
there exists an injective $*$-homomorphism $\rho_k\colon \calB_k \to \DT(A,E)$ such that
$\rho_k|_{A_k}=\id$ and $\rho_k(e_k)=1-P_k$.
Hence, $\pi=\bigoplus_{k\in\calI}\rho_k|_{B_k}\colon B \to \DT(A,E)$ is an injective $*$-homomorphism.
Let $t^\circ_k \colon X_k^\circ \to \calB_k$ be as in Lemma \ref{lem_Xcirc}
and define $t \colon \frakX \to \DT(A,E)$ by $t(\xi \otimes x) =\rho_k(t_k^\circ(\xi)) \pi(x)$ for $\xi \in X_k^\circ$ and $x\in B_k^\perp$.
Then it follows from Lemma \ref{lem_Xcirc} and Lemma \ref{lem_J_X}
that $(\pi,t)$ is an injective covariant representation.
To see the universality of $(\pi,t)$, it suffices to show that $(\pi,t)$ admits a gauge action by Theorem \ref{thm_GIU}.
Indeed, for each $n \in \lN $, let $Q_n$ be the projection in $\lL ( Y)$ onto
the closed submodule generated by all vectors of the form $a_1 \cdots a_n \eta_k$
for some $k\in\calI$ and some reduced word $a_1 \cdots a_n$ with $a_n \notin A_k$,
and set $Q_0=\sum_{k\in\calI}e_{A_k}$.
Letting $U_z := \bigoplus_{n\geq 0} z^n Q_n$ for $z \in \lC$ with $|z|=1$
we have $\Ad U_z ( \pi (x)) = \pi (x)$
for $x \in B$ and $\Ad U_z ( t ( \xi )) = z t ( \xi )$ for $\xi \in \frakX$.
The inclusion $\Delta\bfT(A,E) \subset \rmC^* ( \pi,  t )$ follows from the decomposition
$a = P_k a P_k +P_k a^\circ P_k^\perp + P_k^\perp a ^\circ P_k + E (a) P_k^\perp$ for $a \in A_k$.
\end{proof}
Here are two corollaries.
The first one immediately follows from the proof of the previous theorem.
\begin{corollary}\label{cor_univ}
The $\rmC^*$-algebra $\DT(A,E)$ is the universal $\rmC^*$-algebra generated by a unital copy of the algebraic amalgamated
free product of $A_k,k\in\calI$ over $D$ and $c_0(\calI)$ with minimal projections $P_k,k\in\calI$ such that
for each $k\in\calI$, $e_k=1-P_k$ satisfies that $e_k a e_k =E_k(a)e_k$ for $a\in A_k$.
\end{corollary}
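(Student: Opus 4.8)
The plan is to verify directly that $\DT(A,E)$ has the asserted universal property, by re-running the construction in the proof of Theorem \ref{prop_cuntz_pimsner}. Write $\calU$ for the universal $\rmC^*$-algebra described in the statement; it exists since the relations a priori bound the generators (the copies of the $A_k$ are $*$-homomorphic images, hence contractive, and the $P_k$ are projections). First I would note that these relations are satisfied in $\DT(A,E)$ itself: the algebraic amalgamated free product sits unitally inside $A\subseteq\DT(A,E)$, the $P_k$ are mutually orthogonal projections by Eq.~(\ref{eq_P_k}), and, since $E$ restricts to $E_k$ on $A_k$, Proposition \ref{prop_proj}(ii) gives $e_ka e_k=E_k(a)e_k$ for $e_k:=1-P_k$ and $a\in A_k$. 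Thus the universal property of $\calU$ furnishes a surjection $\Theta\colon\calU\to\DT(A,E)$ which is the identity on the generators.

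To show $\Theta$ is an isomorphism I would construct a splitting $\Xi\colon\DT(A,E)\to\calU$. By Theorem \ref{prop_cuntz_pimsner}, $\DT(A,E)\cong\calO(\frakX)$, and $\calO(\frakX)$ is universal among \emph{covariant} representations of $\frakX$, so it is enough to exhibit a covariant representation of $\frakX$ on $\calU$. The idea is to copy the first part of the proof of Theorem \ref{prop_cuntz_pimsner} into $\calU$: for each $k$, the pair consisting of the copy of $A_k$ and the projection $1-P_k$ satisfies the relations defining $\calB_k$, so the universal property of $\calB_k$ gives a $*$-homomorphism $\tilde\rho_k\colon\calB_k\to\calU$ with $\tilde\rho_k|_{A_k}$ the inclusion and $\tilde\rho_k(e)=1-P_k$; since the $P_k$ are mutually orthogonal, the corners $\tilde\rho_k(B_k)=P_k\tilde\rho_k(\calB_k)P_k$ are mutually orthogonal, so the $\tilde\rho_k|_{B_k}$ together with the diagonal copy of $D$ assemble into a $*$-homomorphism $\tilde\pi\colon B\to\calU$; and composing with the maps $t^\circ_k$ of Lemma \ref{lem_Xcirc} produces $\tilde t\colon\frakX\to\calU$ so that $(\tilde\pi,\tilde t)$ is a representation, covariant by Lemma \ref{lem_J_X} and the identity $\phi_\frakX(\theta_{a\xi_k,b\xi_k})=\theta_{a\xi_{k\ol k},b\xi_{k\ol k}}$. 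Here, in contrast with Theorem \ref{prop_cuntz_pimsner}, injectivity of $\tilde\pi$ is not needed, because I only invoke the universal property of $\calO(\frakX)$, not the gauge-invariant uniqueness theorem. This produces $\Xi\colon\DT(A,E)\cong\calO(\frakX)\to\calU$, and $\Xi$ is surjective because the decomposition $a=P_kaP_k+P_ka^\circ P_k^\perp+P_k^\perp a^\circ P_k+E(a)P_k^\perp$ for $a\in A_k$, which is valid in $\calU$ by the relations, puts every generator of $\calU$ in the range of $\Xi$.

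Finally, tracing $a\in A_k$ and $P_k$ through the last decomposition shows that $\Xi$ sends the copy of $a$ (resp.\ of $P_k$) in $\DT(A,E)$ to the copy of $a$ (resp.\ of $P_k$) in $\calU$, and symmetrically for $\Theta$; hence $\Theta\circ\Xi$ and $\Xi\circ\Theta$ fix the respective generating sets, so both are the identity and $\Theta$ is an isomorphism. I expect the one point requiring genuine care to be checking that $(\tilde\pi,\tilde t)$ is indeed a covariant representation of $\frakX$ on $\calU$ with $\rmC^*(\tilde\pi,\tilde t)=\calU$: well-definedness of $\tilde\pi$ on $B=\bigoplus_kB_k+\Psi(D)$ uses that the $P_k$ are mutually orthogonal and commute with $D$, and the generation statement again uses the displayed decomposition---but these are precisely the verifications already performed in the proof of Theorem \ref{prop_cuntz_pimsner}, with $\DT(A,E)$ replaced by $\calU$.
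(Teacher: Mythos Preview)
Your proposal is correct and is exactly the argument the paper has in mind when it says the corollary ``immediately follows from the proof of the previous theorem'': the construction of $(\pi,t)$ in Theorem~\ref{prop_cuntz_pimsner} uses only the universal property of $\calB_k$ and the relations $e_k a e_k=E_k(a)e_k$, so it transports verbatim to $\calU$ to yield a covariant representation there, and the universal property of $\calO(\frakX)\cong\DT(A,E)$ then gives the inverse to the canonical surjection $\calU\to\DT(A,E)$. Your remark that injectivity of $\tilde\pi$ is unnecessary (only the universal property of $\calO(\frakX)$ is invoked, not gauge-invariant uniqueness) is precisely the point.
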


\begin{corollary}\label{cor_amalgam}
Let $(A,E)=(A_1,E_1)\ast_D(A_2,E_2)$ be a reduced amalgamated free product
and $\calB_k=\rmC^*(A_k,e_k)$ be the split extension associated with $(D\subset A_k,E_k)$ and $\pi_k\colon \calB_k \to A_k$ be the natural surjection.
Define the conditional expectation $\calE_k\colon \calB_k \to D(1-e_k) + De_k$ by
$\calE_k(x) = E_k(\pi_k(a))(1-e_k) +e_k x e_k$.
Then, $\DT(A,E)$ is the reduced amalgamated free product of $(\calB_k,\calE_k), k=1,2$
over $D\oplus D$, identified $e_1$ with $1-e_2$.
Moreover, the canonical surjection from the full amalgamated free product of $\calB_1$ and $\calB_2$ over $D\oplus D$ onto $\DT(A,E)$ is a $*$-isomorphism.
\end{corollary}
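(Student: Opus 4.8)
The plan is to establish the reduced amalgamated free product description first, by producing the canonical conditional expectation and appealing to uniqueness of reduced amalgamated free products, and then to deduce the statement about the full amalgamated free product from the universal description in Corollary \ref{cor_univ}. From the proof of Theorem \ref{prop_cuntz_pimsner} we have injective unital $*$-homomorphisms $\rho_k\colon\calB_k\to\DT(A,E)$ with $\rho_k|_{A_k}=\id$ and $\rho_k(e_k)=1-P_k$. Since $\calI=\{1,2\}$ and $P_1+P_2=1$, we get $\rho_1(e_1)=P_2=1-\rho_2(e_2)$; as $\rho_1$ and $\rho_2$ agree on $D$ and $D$ commutes with the $P_k$, they agree on the subalgebras $D(1-e_1)+De_1\subset\calB_1$ and $De_2+D(1-e_2)\subset\calB_2$, each carried onto $DP_1+DP_2\cong D\oplus D$ compatibly with the rule ``$e_1\leftrightarrow 1-e_2$''. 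Since $\DT(A,E)=\rmC^*(A_1,A_2,P_1,P_2)=\rmC^*(\rho_1(\calB_1),\rho_2(\calB_2))$, uniqueness of reduced amalgamated free products (with nondegenerate conditional expectations) reduces the first assertion to proving: (a) $\calE_k$ is a nondegenerate conditional expectation onto $D(1-e_k)+De_k$; and (b) $\DT(A,E)$ carries a nondegenerate conditional expectation $\calF$ onto $DP_1+DP_2$ with $\calF\circ\rho_k=\rho_k\circ\calE_k$ which vanishes on every $(\calE_1,\calE_2)$-reduced word.

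For (a), the conditional-expectation properties of $\calE_k$ are immediate from its defining formula. For nondegeneracy, observe that the GNS module of $\calE_k$ splits as $X_k\oplus X_k$: the $D(1-e_k)$-summand is the GNS module of $E_k\circ\pi_k$, on which $\calB_k$ acts through $\pi_k$ followed by the faithful GNS representation of $E_k$; the $De_k$-summand is the completion $\overline{\calB_ke_k}$, which via $e_k\leftrightarrow\xi_k$ is naturally $X_k$ with $\calB_k$ acting through its faithful embedding into $\lL(X_k)$. Since $\pi_k$ is faithful on $A_k=\calB_k/\lK(X_k)$, the resulting sum of actions is faithful, so $\calE_k$ is nondegenerate.

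The main work is (b). Let $\calC:=(\calB_1,\calE_1)\ast_{D\oplus D}(\calB_2,\calE_2)$ with ``$e_1\leftrightarrow1-e_2$'', faithfully represented on its GNS module $\calX$ (the amalgamated free product of the pointed $(D\oplus D)$-correspondences $(X_k\oplus X_k,\hat 1_k)$), and let $\calE_\calC\colon\calC\to D\oplus D$ be the faithful compression onto $\hat1(D\oplus D)$. By Corollary \ref{cor_univ}, the faithful copies $A_k\hookrightarrow\calC$ together with the projections $1-\lambda_k(e_k)$ (where $\lambda_k\colon\calB_k\hookrightarrow\calC$ are the canonical embeddings) realize the universal generators and relations defining $\DT(A,E)$, giving a surjection $\Theta\colon\DT(A,E)\to\calC$ with $\Theta\circ\rho_k=\lambda_k$; pulling $\calE_\calC$ back along $\Theta$ and composing with $D\oplus D\cong DP_1+DP_2$ produces a conditional expectation $\calF$ on $\DT(A,E)$ enjoying all the properties required in (b) except, a priori, nondegeneracy, and $\calF$ is nondegenerate precisely when $\Theta$ is an isomorphism. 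The crux of the proof is therefore to show that $\Theta$ is faithful, i.e.\ that for $(\calB_1,\calE_1)$ and $(\calB_2,\calE_2)$ amalgamated over $D\oplus D$ the canonical surjection from the full onto the reduced amalgamated free product is injective. I would do this by analyzing $\calX$ explicitly: using $X^{(k)}=Y_k\otimes_{A_k}X_k\cong X=X(r,k)\otimes_DX_k$ and iterating the free-product decomposition of $X$, one identifies $\calX$ with a module unitarily equivalent to the one carrying the faithful representation $\bigoplus_k\sigma_k$ of $\DT(A,E)$ from \S\ref{ss_general}; under this identification the vacuum vector of $\calX$ implements $\calF$, the word decomposition forces $\calF$ to kill reduced words and to restrict to $\calE_k$ on $\rho_k(\calB_k)$, and $\Theta$ is seen to be faithful. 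Uniqueness of reduced amalgamated free products then gives $\DT(A,E)\cong\calC$, with $\rho_k(\calB_k)$ the canonical copies of $\calB_k$ and the amalgam identified as above. The hard part is exactly this faithfulness; the bookkeeping is delicate because the rule ``$e_1\leftrightarrow1-e_2$'' makes the left $(D\oplus D)$-actions on the two correspondences interlock rather than reduce to a free product over a single copy of $D$.

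For the full amalgamated free product, by Corollary \ref{cor_univ} the algebra $\DT(A,E)$ is the universal unital $\rmC^*$-algebra generated by a unital copy of the algebraic amalgamated free product $A_1\ast_D^{\mathrm{alg}}A_2$ and two orthogonal projections $P_1,P_2$ with $P_1+P_2=1$ such that $e_k:=1-P_k$ satisfies $e_kae_k=E_k(a)e_k$ on $A_k$. Since each $\calB_k$ is itself the universal unital $\rmC^*$-algebra generated by $A_k$ and a projection $e_k$ with $e_kae_k=E_k(a)e_k$, the full amalgamated free product $\calB_1\ast_{D\oplus D}^{\mathrm{full}}\calB_2$ — with the amalgamation subalgebras $D(1-e_1)+De_1$ and $De_2+D(1-e_2)$ identified so that $e_1\leftrightarrow1-e_2$ — has exactly this presentation: the amalgamation forces $e_1+e_2=1$ and identifies $D\subset A_1$ with $D\subset A_2$ and nothing more, while the $*$-subalgebra generated by $A_1\cup A_2$ is $A_1\ast_D^{\mathrm{alg}}A_2$ because the latter embeds into $\DT(A,E)$ via the canonical surjection from the full amalgamated free product (which exists since the $\rho_k$ agree on $D\oplus D$). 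Hence $\calB_1\ast_{D\oplus D}^{\mathrm{full}}\calB_2\cong\DT(A,E)$ canonically, and composing with the identification of the previous paragraph shows that the canonical surjection from the full amalgamated free product onto $\DT(A,E)$ is precisely this isomorphism.
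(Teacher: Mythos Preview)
Your overall plan is sound, and your treatment of the full amalgamated free product (your final paragraph) is exactly the paper's: Corollary~\ref{cor_univ} immediately gives $\DT(A,E)\cong\calB_1\ast^{\mathrm{full}}_{D\oplus D}\calB_2$. Where you diverge from the paper is in proving that the full and reduced amalgamated free products agree. You try to show that the surjection $\Theta\colon\DT(A,E)\to\calC=(\calB_1,\calE_1)\ast_{D\oplus D}(\calB_2,\calE_2)$ is faithful by identifying the GNS module $\calX$ of $\calC$ with the module carrying $\bigoplus_k\sigma_k$. You correctly isolate this as the crux and call it ``delicate,'' but you do not actually carry out the identification; as written, this step is a plan rather than an argument, and the interlocking of the two $(D\oplus D)$-actions you flag is exactly what makes a bare Fock-module computation unpleasant.

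The paper bypasses this bookkeeping with a neat trick. Having identified $\DT(A,E)$ with the full product, it works inside the faithful representation $\theta=\id\boxplus\sigma_1\boxplus\sigma_2$ and exhibits an \emph{external} pair of complementary projections $f_1,f_2$ (not $\theta(P_1),\theta(P_2)$) such that $f_k\theta(x)f_k=\theta(\calE_k(x))f_k$ for $x\in\calB_k$; this is a direct computation from Eq.~(\ref{eq_sigma}). But that compression relation is precisely the universal relation defining $\DT(\calC,\calE)$, so Corollary~\ref{cor_univ} applied a \emph{second} time---now to the reduced product $\calC$---produces a $*$-homomorphism $\DT(\calC,\calE)\to\rmC^*(\theta(\DT(A,E)),f_1,f_2)$ whose restriction to $\calC\subset\DT(\calC,\calE)$ lands in $\theta(\DT(A,E))$. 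This gives a map $\calC\to\DT(A,E)$ splitting the canonical surjection from the full product, whence full $\cong$ reduced $\cong\DT(A,E)$. In short: the paper uses Corollary~\ref{cor_univ} twice (once for $A$, once for $\calC$), replacing your unfinished GNS-module identification by a one-line verification of a compression identity. Your approach could likely be completed, but the paper's is both shorter and self-contained.
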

\begin{proof}
By the previous corollary, $\DT(A,E)$ is naturally isomorphic to the full amalgamated free product $\calB_1 \ast_{D\oplus D}\calB_2$.
Let $\sigma_k \colon \DT(A,E)\to \lL(X^{(k)})$ be as in \S\S \ref{ss_general}
and set $\theta:=\mathord{\id} \boxplus \sigma_1 \boxplus \sigma_2 \colon \DT(A,E) \to \lL(Y \boxplus X^{(1)} \boxplus X^{(2)})$.
Define the projection $f_1 \in \lL(Y \boxplus X^{(1)} \boxplus X^{(2)})$ by
$P_2 \oplus (e_D + P_{(\ell,2)}^\perp) \oplus P_{(\ell,2)}^\perp$
and put $f_2=1-f_1$.
For any $x \in \rmC^*(A_1,P_1) \cong \calB_1$ it follows from Eq.~(\ref{eq_sigma}) that
\begin{align*}
f_1 \theta(x)f_1 =
P_2xP_2 \oplus \sigma_1(P_1 x P_1)e_D + \sigma_1(P_2xP_2) P_{(\ell,2)}^\perp \oplus  \sigma_2(P_2 x P_2) P_{(\ell,2)}^\perp
=
\theta(\calE_1 (x))f_1.
\end{align*}
Similarly, one has $f_2\theta(y)f_2= \theta(\calE_2(y))f_2$ for $y \in \rmC^*(A_2,P_2) \cong \calB_2$.
Thus, by the previous corollary, the map $\calB_1 \ast_{D\oplus D} \calB_2 \to \DT(A,E)$ factors though the reduced one.
\end{proof}
%%%%%%%%%%%%%%%%%%%%%%%%%%%%%%%%%%%
%
%									TOEPLITZ 	EXTENSION
%
%%%%%%%%%%%%%%%%%%%%%%%%%%%%%%%%%%%
%\subsection{Toeplitz extensions}
We next show that the Toeplitz extension
\[
  0 \too \lK ( \calF ( \frakX) J_\frakX ) \too \calT (\frakX ) \too \calO (\frakX ) \too 0
\]
is semisplit.
Let $\sigma_k \colon \Delta\bfT(A,E) \to \lL (X^{(k)})$ be as in \S\S \ref{ss_general} and set $(X^\calI, \sigma ) := \bigboxplus_{k\in\calI} (X^{(k)},\sigma_k)$.
We denote the GNS vector in $X^{(k)}$ by $\xi_0^{(k)}$.
We fix a fixed-point free bijection $\tau$ on $\calI$.
To simplify the notation, we will write $\tau (k) = k+1$ for $k\in \calI$.
Let $Q \in \lL (X^\calI)$ be the projection onto $ \bigboxplus_{k\in\calI} P_{(r,k)}^\perp X^{(k+1)}$
and $\theta\colon \lL(X^\calI) \to \lL(QX^\calI)$ be the compression map.
Note that $P_{(r,k)}^\perp X^{(k+1)} \cong X(r,k)\otimes_D X_k^\circ$ contains a copy of
$X_k^\circ$, denoted by $X_k^{\circ(k+1)}$.
Since $QX^\calI$ is invariant under $\sigma\circ{\pi} ( B)$ and $\sigma\circ{ t }( \frakX )$,
the pair $(\pi', t' ) := (\sigma\circ{\pi} ( \cdot ) Q , \sigma\circ{t} ( \cdot ) Q )$ is
a representation of $\frakX$ on $\lL (Q X^\calI)$.

\begin{proposition}\label{prop_toeplitz}
The representation $(\Pi, T ) := ( \pi \oplus \pi' , t \oplus t' )$ of $\frakX$ is universal.
\end{proposition}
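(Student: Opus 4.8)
The plan is to apply the first (Toeplitz) part of the gauge-invariant uniqueness theorem, Theorem~\ref{thm_GIU}, to the representation $(\Pi,T)=(\pi\oplus\pi',t\oplus t')$. Thus I must verify three things: (1) $\Pi$ is injective; (2) $(\Pi,T)$ admits a gauge action; and (3) $\Pi(J_\frakX)\cap\psi_T(\lK(\frakX))=\{0\}$. Injectivity of $\Pi$ is immediate, since $\pi$ alone is already injective by Theorem~\ref{prop_cuntz_pimsner}. For the gauge action, recall from the proof of Theorem~\ref{prop_cuntz_pimsner} that $\pi$ carries the gauge action $\Ad U_z$ coming from the grading $U_z=\bigoplus_{n\ge0}z^nQ_n$ on $\lL(Y)$; on the second summand I would build the analogous grading on $\lL(X^\calI)$ (the modules $X^{(k)}\cong X$ carry the natural word-length filtration, and $t'$ raises word length by one while $\pi'$ preserves it), restrict it to the invariant submodule $QX^\calI$, and take the direct sum of the two unitary groups. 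Since $Q$ commutes with the grading (it is defined via the submodules $P_{(r,k)}^\perp X^{(k+1)}$, which are spanned by reduced words), this is routine.

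The substantive point is (3): one must show $\Pi(J_\frakX)\cap\psi_T(\lK(\frakX))=\{0\}$. By Lemma~\ref{lem_J_X}, $J_\frakX=\bigoplus_{k\in\calI}\lK(X_k^\circ)$, and $\psi_T(\lK(\frakX))=\psi_t(\lK(\frakX))\oplus\psi_{t'}(\lK(\frakX))$, so the intersection is controlled by the second coordinate. Concretely, given $x=(K_k)_k\in\bigoplus_k\lK(X_k^\circ)$ with $\Pi(x)=\psi_T(F)$ for some $F\in\lK(\frakX)$, I must deduce $\Pi(x)=0$, i.e.\ $\pi(x)=0$ and $\pi'(x)=0$. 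The idea is that $\pi'$ already kills $J_\frakX$: the representation $(\pi',t')$ on $QX^\calI$ is essentially a ``shifted'' copy in which the relation $\pi'=\psi_{t'}\circ\phi_\frakX$ fails precisely on $J_\frakX$ — indeed $\pi'$ sees $K_k\in\lK(X_k^\circ)$ acting on the $X^{(k)}$-component, whereas in $QX^\calI$ only the shifted copies $X_k^{\circ(k+1)}\subset X^{(k+1)}$ survive, and $\sigma_{k+1}(K_k)$ compressed to $Q$ vanishes since $K_k\in\lK(X_k^\circ)$ acts as $0$ on $X^{(k+1)}=Y_{k+1}\otimes_{A_{k+1}}X_{k+1}$ after the cut-off to $\lL(Y_{k+1})$. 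Hence $\pi'(x)=0$, so $\Pi(x)=\pi(x)\oplus 0$, and then $\psi_T(F)=\pi(x)\oplus 0$ forces the second component $\psi_{t'}(F)=0$; since $t'$ is built from the shifted copies which are nonzero, the map $\psi_{t'}$ is faithful on $\lK(\frakX)$ (or at least, $\psi_{t'}(F)=0$ implies $\psi_t(F)=0$ by comparing with the gauge-invariant pieces), whence $\pi(x)=\psi_t(F)=0$. I expect this coordinate-chasing — pinning down exactly why $\pi'$ annihilates $J_\frakX$ while remaining ``large enough'' on $\psi_{t'}(\lK(\frakX))$ to detect all of $F$ — to be the main obstacle, and it will hinge on the explicit description of $QX^\calI=\bigboxplus_k P_{(r,k)}^\perp X^{(k+1)}$ and the identifications in Eq.~(\ref{eq_sigma}).

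A convenient way to organize step (3) is to note that $(\pi',t')$ is, up to the unitaries $S_k$, the ``other half'' of the standard Pimsner picture: on $\calF(\frakX)$ itself one has the Toeplitz representation $(\varphi_\infty,\tau_\infty)$, and the point of adjoining $(\pi',t')$ to $(\pi,t)$ is exactly to recover the ideal $\lK(\calF(\frakX)J_\frakX)$ that $\calO(\frakX)$ has quotiented out. I would therefore first identify $\lK(\calF(\frakX)J_\frakX)$ inside $\calT(\frakX)$ (this is canonical, as recalled in the Preliminaries), then check that the canonical surjection $\calT(\frakX)\to\rmC^*(\Pi,T)$ is injective on this ideal by exhibiting $\psi_{t'}$ as faithful on $\lK(X_k^{\circ(k+1)})\otimes(\text{tail})$, and finally invoke the five lemma / the Toeplitz half of Theorem~\ref{thm_GIU} to conclude. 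Either route reduces the problem to the single verification that $(\pi',t')$ detects $\lK(\calF(\frakX)J_\frakX)$ faithfully while $\pi$ detects $\calO(\frakX)$ faithfully, which together give universality of $(\Pi,T)$.
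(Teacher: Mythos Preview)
Your overall plan (verify the three hypotheses of Theorem~\ref{thm_GIU}) matches the paper, and your treatment of injectivity and the gauge action is fine. The substantive error is in step~(3): your claim that $\pi'$ \emph{annihilates} $J_\frakX$ is exactly backwards, and this is the heart of the matter.

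Concretely, for $K_k=\theta_{a\xi_k,b\xi_k}\in\lK(X_k^\circ)\subset B_k$ one has $\pi(K_k)=aP_k^\perp b^*\in\DT(A,E)\subset\lL(Y)$, and this element has nonzero components in \emph{every} $\lL(Y_j)$, not just in $\lL(Y_k)$. (The map $\pi=\bigoplus_j\rho_j|_{B_j}$ is a direct sum on the \emph{domain} $B$, not on the target $\lL(Y)$.) Using Eq.~(\ref{eq_sigma}) one computes $\sigma_{k+1}(aP_k^\perp b^*)=\phi_X(a)P_{(\ell,k)}\phi_X(b^*)$, and on the copy $X_k^{\circ(k+1)}\cong X_k^\circ$ sitting inside $P_{(r,k)}^\perp X^{(k+1)}\subset QX^\calI$ this acts exactly as $K_k$. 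Thus $\pi'$ is \emph{faithful} on $J_\frakX$, not zero.

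The paper's argument exploits precisely this faithfulness. The submodule $\bigboxplus_j X_j^{\circ(j+1)}$ is the ``degree-one'' piece of $QX^\calI$; since $t'(\eta)^*$ lowers degree and $QX^\calI$ has no degree-zero component, every $t'(\xi)t'(\eta)^*$ vanishes there, hence so does all of $\psi_{t'}(\lK(\frakX))$. So if $x=(K_k)_k\in J_\frakX$ satisfies $\Pi(x)\in\psi_T(\lK(\frakX))$, then $\pi'(x)\in\psi_{t'}(\lK(\frakX))$ must vanish on $X_k^{\circ(k+1)}$; but that restriction is $K_k$, so $K_k=0$. In short, the roles you assigned to $\pi'$ and $\psi_{t'}$ on $J_\frakX$ are reversed: it is $\psi_{t'}(\lK(\frakX))$ that vanishes on the bottom submodule, while $\pi'$ sees $J_\frakX$ faithfully there. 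Your alternative ``five-lemma'' outline in the last paragraph has the right intuition---$(\pi',t')$ is what detects $\lK(\calF(\frakX)J_\frakX)$---but making it precise again comes down to exactly this computation.
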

\begin{proof}
It is clear that $(\Pi, T )$ is injective and admits a gauge action.
Thus, we only have to check that $\Pi ( J_\frakX ) \cap \psi_{T} (\lK ( \frakX ) ) = \{ 0 \}$ by
Theorem \ref{thm_GIU}.
Assume that $x \in \lK ( X_k^\circ )$ satisfies $\Pi (x) \in \psi_{T} ( \lK ( \frakX ))$.
Observe that $t' ( \xi ) t' (\eta )^*$ vanishes on $\bigboxplus_{j\in\calI} X_j^{\circ(j+1)}$ for all $\xi, \eta \in \frakX$,
and hence so does $\pi' (x)$.
On the other hand, the restriction of $\pi' (x)$ to $X_k^{\circ(k+1)}$
is unitarily equivalent to $x$ itself on $X_k^\circ$.
Thus, $x$ must be zero.
\end{proof}

Since $(\Pi,T)$ is universal, there is a surjective $*$-homomorphism $p \colon \rmC^*(\Pi,T) \to \rmC^*(\pi,t)$
such that $p\circ \Pi =\pi$ and $p\circ T=t$.
Note that
the kernel of $p$ is generated by $  \{ \Pi( \theta_{a\xi_k,b\xi_k}) - \psi_T(\phi_\frakX(\theta_{a\xi_k,b\xi_k}))
\mid k\in\calI, a,b\in A_k^\circ \}$
by Lemma \ref{lem_J_X}.
\begin{theorem}\label{prop_semisplit}
The UCP map $\Theta := \id \oplus ( \theta \circ \sigma) \colon \Delta\bfT(A,E) \to  \Delta\bfT(A,E)  \oplus \lL ( QX^\calI )$
maps into $\rmC^* ( \Pi, T)$ and satisfies that $p \circ \Theta = \id$.
\end{theorem}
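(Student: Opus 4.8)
The plan is to verify directly that $\Theta$ lands in $\rmC^*(\Pi,T)$ by checking it on the generators $\Psi_k(a)$, $a\in A_k$, and $b\xi_{k\bar k}$, $b\in A_k^\circ$, and then to check $p\circ\Theta=\id$ on the same generators. For the first point, recall that $\rmC^*(\Pi,T)=\rmC^*(\pi\oplus\pi',\,t\oplus t')$, so I need to identify, for each generator $g$ of $\DT(A,E)=\rmC^*(\pi,t)$, what $\theta\circ\sigma$ does to it and show $(\id\oplus(\theta\circ\sigma))(g)=(g,\ ?)$ agrees with $(\pi\oplus\pi')(x)$ or $(t\oplus t')(\xi)$ for the appropriate $x$ or $\xi$. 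Concretely: since $p\circ\Pi=\pi$ and $p\circ T=t$, the second coordinate of $\Pi(x)$ is $\pi'(x)=\sigma\circ\pi(x)\,Q=\theta(\sigma(\pi(x)))$, and likewise $T$; so the claim ``$\Theta$ maps into $\rmC^*(\Pi,T)$'' is exactly the assertion that $\theta\circ\sigma$, restricted to $\DT(A,E)=\rmC^*(\pi,t)$, coincides with the second-coordinate map $x\mapsto\theta(\sigma(\,\cdot\,))$ that appears in $(\Pi,T)$ — which is true \emph{tautologically} provided I can express every element of $\DT(A,E)$ through $\pi$ and $t$. That expression is furnished by Theorem~\ref{prop_cuntz_pimsner}: $\rmC^*(\pi,t)=\DT(A,E)$, and the decomposition $a=P_kaP_k+P_ka^\circ P_k^\perp+P_k^\perp a^\circ P_k+E(a)P_k^\perp$ for $a\in A_k$ writes each $A_k$, hence all of $A$ and the $P_k$, in terms of $\pi(\Psi_k(\cdot))$ and $t(\cdot)$. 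So the first assertion reduces to the observation that $\theta\circ\sigma$ is a $*$-homomorphism on $\DT(A,E)$ whose values on $\pi(B)$ and $t(\frakX)$ are by definition $\pi'$ and $t'$.

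For $p\circ\Theta=\id$: since $p$ kills the second coordinate in the sense that $p(\Pi(x))=\pi(x)$ and $p(T(\xi))=t(\xi)$, and since $\Theta(g)$ for a generator $g$ equals the corresponding element $\Pi(x)$ or $T(\xi)$ of $\rmC^*(\Pi,T)$ as just argued, we get $p(\Theta(g))=\pi(x)=g$ or $t(\xi)=g$. As $p\circ\Theta$ and $\id$ are both $*$-homomorphisms (once we know $\Theta$ maps into $\rmC^*(\Pi,T)$, it is a genuine $*$-homomorphism there, being $\id\oplus(\theta\circ\sigma)$ with $\theta\circ\sigma$ a $*$-homomorphism) agreeing on a generating set, they agree everywhere. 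The final conclusion is that the Toeplitz extension $0\to\lK(\calF(\frakX)J_\frakX)\to\calT(\frakX)\to\calO(\frakX)\to 0$, which by Proposition~\ref{prop_toeplitz} is isomorphic to $0\to\ker p\to\rmC^*(\Pi,T)\xrightarrow{p}\DT(A,E)\to 0$, is semisplit with UCP section $\Theta$.

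The only genuine content — and the step I expect to be the main obstacle — is checking that $QX^\calI$ is invariant under $\sigma\circ\pi(B)$ and $\sigma\circ t(\frakX)$ (needed already in the paragraph before Proposition~\ref{prop_toeplitz}, but used again here), and, more delicately, that the values of $\theta\circ\sigma$ on the images $\pi(\Psi_k(a))=\theta(\sigma(P_kaP_k))$ and on $t(b\xi_{k\bar k})$ really are the operators $\pi'$ and $t'$ as defined, i.e.\ that compressing $\sigma_k$-type actions by $Q$ behaves multiplicatively in the way the decomposition of $a\in A_k$ demands. This is a bookkeeping computation with the projections $P_{(\ell,k)}$, $P_{(r,k)}$, the unitaries $S_k$, and the explicit formulas \eqref{eq_sigma} for $\sigma_j(P_k)$; one checks that the cross terms $P_ka^\circ P_k^\perp$ etc.\ map under $\theta\circ\sigma$ to the corresponding products $t'(\cdots){\pi'}(\cdots)$ and reassembles. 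I would carry this out component-by-component over $k\in\calI$ using that $X^{(k)}\cong X$ and that $P_{(r,k)}^\perp X^{(k+1)}\cong X(r,k)\otimes_D X_k^\circ$ contains the copy $X_k^{\circ(k+1)}$ on which, as noted in the proof of Proposition~\ref{prop_toeplitz}, $\pi'$ restricts to a faithful copy of the relevant data — which is precisely what makes the compression non-degenerate and the identity $p\circ\Theta=\id$ hold.
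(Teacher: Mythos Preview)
There is a genuine gap: you are treating $\theta\circ\sigma$ as a $*$-homomorphism, but it is not one. The map $\theta\colon \lL(X^\calI)\to\lL(QX^\calI)$ is the compression $x\mapsto QxQ|_{QX^\calI}$, so $\theta\circ\sigma$ is only UCP --- this is why the statement itself calls $\Theta$ a UCP map, not a $*$-homomorphism. Invariance of $QX^\calI$ under $\sigma\circ\pi(B)$ and $\sigma\circ t(\frakX)$ only gives $(1-Q)\sigma(y)Q=0$ for $y\in\pi(B)\cup t(\frakX)$; it says nothing about $\sigma(t(\xi))^*$, and indeed $Q$ does \emph{not} commute with $\sigma(A)$. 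Concretely, for $a,b\in A_k^\circ$ one computes
\[
\Theta(ab^*)-\Theta(a)\Theta(b^*)
=\Pi\bigl(\theta_{a\xi_k,b\xi_k}\bigr)-\psi_T\bigl(\phi_\frakX(\theta_{a\xi_k,b\xi_k})\bigr),
\]
which is a \emph{nonzero} generator of $\ker p$. So your checking-on-generators argument for $p\circ\Theta=\id$ collapses: $p\circ\Theta$ is not a $*$-homomorphism, and agreement on generators does not propagate.

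The paper's proof confronts exactly this issue. It shows that $DP_k$ lies in the multiplicative domain of $\Theta$, writes down explicitly $\Theta(c)\in\rmC^*(\Pi,T)$ for $c\in A_k$, and then proves two things by hand: (i) the defect $\Theta(ab^*)-\Theta(a)\Theta(b^*)$ for $a,b\in A_k^\circ$ lands in $\ker p$; and (ii) $\Theta$ \emph{is} multiplicative along reduced words, i.e.\ $\Theta(a_1\cdots a_{n+1})=\Theta(a_1\cdots a_n)\Theta(a_{n+1})$, established by a direct computation showing the cross term $Q\sigma(a_1\cdots a_{n-1})(1-Q)\sigma\circ t(a_n^*\xi_{k\bar k})^*Q$ vanishes on $X_k^{\circ(k+1)}$. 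These two facts together force $\Theta(\DT(A,E))\subset\rmC^*(\Pi,T)$ and $p\circ\Theta=\id$. If $\Theta$ were multiplicative the extension would be split, not merely semisplit; the whole content of the theorem is that the multiplicativity defects are absorbed by $\ker p$.
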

\begin{proof}
Since $DP_k$ sits in the multiplicative domain of $\Theta$
and $\Theta (c)=\Pi (\Psi_k(c)) + T(c^\circ \xi_{k\ol{k}}) + T(c^{*\circ}\xi_{k\ol{k}})^* + \Pi(\Psi( E(c))1_{B_k}^\perp) \in \rmC^*(\Pi, T)$
for $c \in A_k$ and $k\in\calI$,
it suffices to show that $\Theta (ab^*) - \Theta(a)\Theta(b^*) \in \ker p$
for $a \in A_k^\circ$, $b\in A_k^\circ$ and $k\in \calI$
and $\Theta (a_1 a_2 \cdots a_{n+1})= \Theta (a_1a_2 \cdots a_n)\Theta (a_{n+1})$ for
all reduced words $a_1\cdots a_n$ with $n\geq 1$.
Indeed, it follows from the above decomposition of $\Theta (c)$ that
 that
\begin{align*}
\Theta (ab^*) - \Theta(a)\Theta(b^*)
&= \Pi (\Psi_k(ab^*)) - \Pi(\Psi_k(a)\Psi_k(b^*)) - T(a\xi_{k\ol{k}})T(b\xi_{k\ol{k}})^* \\
&=\Pi( \theta_{a\xi_k,b\xi_k}) - \psi_T(\phi_\frakX(\theta_{a\xi_k,b\xi_k})),
\end{align*}
which belongs to $\ker p$.
We next show the multiplicativity on reduced words.
Take $k\in \calI$ and a reduced word $a_1 \cdots a_{n}$ with $a_{n} \in A_k^\circ$ arbitrarily.
Since $\pi(\Psi_k(A_k))$ and $t(\frakX)$ sit in the right multiplicative domain of $\Theta$,
we have
\begin{align*}
\Theta (a_1 \cdots a_n) - \Theta(a_1 \cdots a_{n-1})\Theta(a_n)
&= \Theta (a_1 \cdots a_{n-1}t(a_n^*\xi_{k\ol{k}})^* ) - \Theta(a_1 \cdots a_{n-1})T(a_n^*\xi_{k\ol{k}})^* \\
&= 0 \oplus Q  \sigma (a_1 \cdots a_{n-1}) (1 -Q) \sigma\circ{t} (a^*_n\xi_{k\ol{k}})^* Q.
\end{align*}
Note that $(1 -Q) \sigma\circ {t} (a^*_n\xi_{k\ol{k}})^* Q $ is supported on $X_k^{(k+1)\circ}$.
For any $x \in A_k^\circ$,
since $\sigma = \phi_X$ on $A$,
we have
$Q  \sigma (a_1 \cdots a_{n-1}) (1 -Q) \sigma\circ{t} (a^*_n\xi_{k\ol{k}})^* Q x \xi_0^{(k+1)}
 = Q  a_1 \cdots a_{n-1}\xi_0^{(k+1)} E(a_nx)
 =0$.
\end{proof}

\begin{remark}\label{rem_Theta}
The above proof shows that $\rmC^*(\Theta(A_k))$ is a semisplit extension of $A_k$ by $\lK(X_k^\circ)$.
The restriction of $E_\frakX(\cdot)1_{B_k} \colon \rmC^*(\Pi,T) \cong \calT(\frakX) \to B_k$ to $\rmC^*(\Theta(A_k))$
is a $*$-isomorphism onto $B_k$ such that $E_{\frakX}(\Theta(a))1_{B_k} =\Psi_k(a)$ for $a\in A_k$.
\end{remark}

\begin{corollary}\label{cor_app}
The $\rmC^*$-algebra $\Delta\bfT(A,E)$ is nuclear or exact if and only if $A_k$ has the same property for every $k\in \calI$.
Moreover, it follows that $\Lambda_\cb(\DT(A,E)) =\max\{\Lambda_\cb(A_k) \mid k \in \calI\}$.	
\end{corollary}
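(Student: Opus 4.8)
The plan is to transfer the three properties (nuclearity, exactness, CBAP) between $\DT(A,E)$ and the $A_k$ by running them through the Cuntz--Pimsner picture supplied by Theorems \ref{prop_cuntz_pimsner} and \ref{prop_semisplit}, together with the structure of the coefficient algebra $B=\bigoplus_k B_k+\Psi(D)$. First I would recall the standard permanence results for Pimsner algebras: if the coefficient algebra $B$ is nuclear (resp.\ exact, resp.\ has CBAP) then so do $\calT(\frakX)$ and $\calO(\frakX)=\DT(A,E)$, and for CBAP one even has $\Lambda_\cb(\calO(\frakX))\le\Lambda_\cb(B)$ (see \cite{Pimsner_free,Katsura,Dykema_Smith}). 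So the first step is to analyse $B$. Each $B_k$ sits in the semisplit extension $0\to\lK(X_k^\circ)\to B_k\to A_k\to 0$, so $B_k$ has one of these properties iff both $A_k$ and $\lK(X_k^\circ)$ do; but $\lK(X_k^\circ)$ is a hereditary subalgebra of $\lK(X_k)\cong\lK\otimes D$ (up to stabilization — more precisely $\lK(X_k)$ is Morita equivalent to $D$), so it inherits nuclearity/exactness from $D$, and by Lemma \ref{lem_cbap} satisfies $\Lambda_\cb(\lK(X_k^\circ))\le\Lambda_\cb(D)$. Since $D\subset A_k$ unitally with a conditional expectation, $\Lambda_\cb(D)\le\Lambda_\cb(A_k)$ and $D$ is exact/nuclear whenever $A_k$ is; hence $B_k$ has the property in question iff $A_k$ does, with $\Lambda_\cb(B_k)=\Lambda_\cb(A_k)$. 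Finally $B$ is an extension of $\prod_k B_k$ (in the finite case $\bigoplus_k B_k$) by nothing essential — it is a subalgebra of $\prod_k B_k$ containing $\bigoplus_k B_k$ as an ideal with quotient a quotient of $D$ — so $B$ has the property iff every $B_k$ does, with $\Lambda_\cb(B)=\max_k\Lambda_\cb(A_k)$ (using $\Lambda_\cb(D)\le\max_k\Lambda_\cb(A_k)$ for the quotient). Combining, if every $A_k$ has the property then $B$ does, hence $\DT(A,E)$ does, and $\Lambda_\cb(\DT(A,E))\le\max_k\Lambda_\cb(A_k)$.

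For the converse direction, I would use that each $A_k$ is a corner of $\DT(A,E)$: by Proposition \ref{prop_proj}(iii) the compression by the Jones projection $e_{A_k}=\theta_{\eta_k,\eta_k}$ is a conditional expectation $\DT(A,E)\to A_k$, so $A_k\cong e_{A_k}\DT(A,E)e_{A_k}$ is a hereditary subalgebra of $\DT(A,E)$. Nuclearity, exactness, and CBAP all pass to hereditary subalgebras (for CBAP, $\Lambda_\cb(e_{A_k}\DT(A,E)e_{A_k})\le\Lambda_\cb(\DT(A,E))$), so each $A_k$ inherits the property and $\Lambda_\cb(A_k)\le\Lambda_\cb(\DT(A,E))$. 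Together with the previous paragraph this gives the "if and only if" for nuclearity and exactness, and both inequalities $\max_k\Lambda_\cb(A_k)\le\Lambda_\cb(\DT(A,E))\le\max_k\Lambda_\cb(A_k)$, hence equality.

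The main obstacle I anticipate is bookkeeping around the coefficient algebra $B$ and the module $\frakX$: one must make sure the Pimsner permanence theorems apply even though $\phi_\frakX$ is not injective (its kernel contains $\bigoplus_k I_k$ by Lemma \ref{lem_J_X}) and that the CBAP estimate $\Lambda_\cb(\calO(\frakX))\le\Lambda_\cb(B)$ is available in the non-injective generality — this is where \cite{Dykema_Smith} (CBAP for Cuntz--Pimsner algebras) and \cite{Katsura} are invoked, and one should cite them carefully. A secondary subtlety is the Morita/hereditary identification $\lK(X_k)\sim D$: since $E_k$ is nondegenerate, $X_k$ is a full Hilbert $D$-module, so $\lK(X_k)$ is Morita equivalent to $D$ and $\lK(X_k^\circ)$ is a (full, in fact) hereditary subalgebra; these preserve all three properties and the Haagerup constant, which is exactly Lemma \ref{lem_cbap} for the CBAP part. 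Everything else is an assembly of standard permanence facts.
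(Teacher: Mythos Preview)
Your proposal is correct and follows essentially the same route as the paper: push nuclearity/exactness/CBAP from the $A_k$ to $D$, then to $\lK(X_k^\circ)$ (via Lemma~\ref{lem_cbap} or \cite[Proposition~B.7]{Katsura}), then to the semisplit extensions $B_k$, assemble into $B$, apply the Pimsner permanence results of \cite{Katsura,Dykema_Smith} to get $\calT(\frakX)$, and finally descend to $\calO(\frakX)\cong\DT(A,E)$ via the semisplit Toeplitz extension of Theorem~\ref{prop_semisplit}; for the converse use Proposition~\ref{prop_proj}(iii).

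One small correction for the converse: you assert that $A_k\cong e_{A_k}\DT(A,E)e_{A_k}$ is a hereditary subalgebra, but $e_{A_k}$ need not lie in $\DT(A,E)$ (nor in its multiplier algebra) in general---indeed $e_{A_k}\in\DT(A,E)$ happens exactly when $1\in I_k=\phi_{X_k}^{-1}(\lK(X_k))$, cf.\ \S\ref{subsection_boundary}. What Proposition~\ref{prop_proj}(iii) actually gives is a conditional expectation $\DT(A,E)\to A_k$, and that is already enough: nuclearity, exactness, and CBAP (with the same Haagerup constant) all pass to the range of a conditional expectation, so the argument goes through unchanged once you replace ``hereditary subalgebra'' by ``range of a conditional expectation.'' The paper's proof uses exactly this.
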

\begin{proof}
Assume that $A_k$ is nuclear (resp.~exact) for every $k\in \calI$.
Since $D$ is the range of the conditional expectation $E_1$, so is $D$.
By \cite[Proposition B.7]{Katsura} $\lK(X_k^\circ)$ has the same property,
hence so does the semisplit extension $B_k$.
When $\calI$ is finite, $B=\bigoplus_{k\in\calI} B_k$;
otherwise $B$ is a split extension of $D$ by $\bigoplus_k B_k$.
Therefore, $\calT (\frakX )$ is nuclear (resp.~exact) by \cite[Theorem 7.1, Theorem 7.2]{Katsura}.
Thus, the same holds for $\Delta\bfT(A,E)$ by Theorem \ref{prop_cuntz_pimsner} and Theorem \ref{prop_semisplit}.
The opposite implication follows from Proposition \ref{prop_proj} (iii).
The assertion for CBAP can be shown in the same manner together with \cite{Dykema_Smith} and Lemma \ref{lem_cbap}.
\end{proof}

\if0 %%%%%%%%%%%%%%%%%%%%%%%%%%%%%%%
\begin{remark}
Let $\DT^\sim(A,E)$ be as in Remark \ref{rem_3comp}.
Let $B^\sim := B \oplus D \subset \prod_{k \in \calJ} B_k$
and consider the $\rmC^*$-correspondence $\frakX^\sim: = \bigoplus_{k \in \calI} X_{k}^\circ \otimes_D ( 1_{B^\sim} - 1_{B_k})B^\sim$ over $B^\sim$.
Then,
one can show that $\DT^\sim(A,E)$ is isomorphic to $\calO(\frakX^\sim)$ naturally and the Toeplitz extension of $\frakX^\sim$ is semisplit
in the same manner as Proposition \ref{prop_cuntz_pimsner} and Proposition \ref{prop_semisplit}.
\end{remark}
\fi %%%%%%%%%%%%%%%%%%%%%%%%
%%%%%%%%%%%%%%%%%%%%%%%%%%%%%%%%%%%%%
%
%							BOUNDARY ACTIONS2
%
%%%%%%%%%%%%%%%%%%%%%%%%%%%%%%%%%%%%%
\subsection{Boundary actions}\label{subsection_boundary}

Let $\Gamma=\Gamma_1 \ast_\Lambda \Gamma_2$ be
an amalgamated free product.
By the definition of the topology of $\DT$,
$\delta_{\Gamma_k}\in C(\DT)$ if and only if $[\Lambda:\Gamma_k]<\infty$.
Thus, $\partial\bfT$ is closed in $\DT$ if and only if $[\Lambda:\Gamma_k]<\infty$
for $k=1,2$.
In this case, the reduced crossed product $C(\partial\bfT)\rtimes \Gamma$ is the quotient
of $C(\DT)\rtimes_\red \Gamma$ by $c_0(\bfV)\rtimes_\red \Gamma$.
Let $(A,E)=(A_1,E_1)\ast_D(A_2,E_2)$ be the corresponding reduced amalgamated free product.
Via the unitary between $\ell^2(\Gamma/\Gamma_k)\otimes A$ and $Y_k\otimes_{A_k} A$
in the proof of Proposition \ref{prop_group} the projection $\delta_{\Gamma_k}\otimes 1$
corresponds to $e_{A_k}\otimes 1$,
and thus $c_0(\bfV)\rtimes_\red \Gamma$ corresponds to
$\lK(Y_1)\oplus \lK(Y_2)$,
generated by $e_{A_1}$ and $e_{A_2}$.
In this subsection, we see that $C(\partial\bfT)\rtimes_\red \Gamma$
is also a relative Cuntz--Pimsner algebra.

\medskip
Let $(A,E)=\bigast_D(A_k,E_k)$ be any reduced amalgamated free product.
Consider the ideals $J:=\phi_\frakX^{-1}(\lK(\frakX))$
and $J_0= \bigoplus_{k\in\calI} I_k \oplus\lK(X_k^\circ)$ of $B$.
Since $\phi_\frakX$ vanishes on $I_k \subset B_k$,
we have $J_0\subset J$.
\begin{lemma}
If either $\calI$ is finite, the left action $D \to \lL(X_k^\circ)$ is faithful for every $k\in\calI$,
or $I_k=\{0\}$ for every $k\in\calI$,
then $J=J_0$.
When $I_k=\{0\}$ for $k\in\calI$, we have $J=J_0=J_\frakX$.
\end{lemma}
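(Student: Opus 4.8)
The plan is to establish the nontrivial inclusion $J\subseteq J_0$, the reverse inclusion $J_0\subseteq J$ having already been recorded. The starting point is the ``block-diagonal'' structure of $\phi_\frakX$: each $B_k$ is an ideal of $B=\bigoplus_{k\in\calI}B_k+\Psi(D)$, so its unit $1_{B_k}$ is a central projection of $B$, and for $x\in B$ the operator $\phi_\frakX(x)$ leaves each summand $X_k^\circ\otimes_D B_k^\perp$ of $\frakX$ invariant, acting there as $S_k\otimes 1_{B_k^\perp}$, where $S_k\in\lL(X_k^\circ)$ is the $\lL(X_k^\circ)$-component of $1_{B_k}x$ under $B_k\hookrightarrow A_k\oplus\lL(X_k^\circ)$. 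Compressing by the projection onto the $k$-th summand, the assumption $x\in J$ forces $S_k\otimes 1_{B_k^\perp}\in\lK(X_k^\circ\otimes_D B_k^\perp)$ for every $k$; moreover, when $\calI$ is infinite it forces $\|S_k\otimes 1_{B_k^\perp}\|\to 0$ along the net of finite subsets of $\calI$, since the diagonal blocks of any compact operator on a direct sum of Hilbert modules tend to $0$ in norm.

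Next I would invoke Lemma \ref{lem_cpt_tensor} for the Hilbert $D$-module $X_k^\circ$ and the correspondence $B_k^\perp$ over $B$ with left action $d\mapsto\Psi(d)1_{B_k}^\perp$; this action is injective because, choosing any $j\neq k$ (possible as $|\calI|\ge 2$), its $j$-th coordinate is $\Psi_j|_D$, which is injective since $\Psi_j(d)$ has $d$ as its $A_j$-component. The lemma then yields $S_k\in\lK(X_k^\circ)$, and, the left action being injective, the ampliation $\lL(X_k^\circ)\to\lL(X_k^\circ\otimes_D B_k^\perp)$ is isometric, so $\|S_k\otimes 1_{B_k^\perp}\|=\|S_k\|$. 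Writing $1_{B_k}x=(a_k,S_k)\in B_k\subseteq A_k\oplus\lL(X_k^\circ)$, membership in $B_k$ gives $S_k-e_D^\perp\phi_{X_k}(a_k)e_D^\perp\in\lK(X_k^\circ)$; since $S_k$ is compact, so is $e_D^\perp\phi_{X_k}(a_k)e_D^\perp$, hence $\phi_{X_k}(a_k)\in\lK(X_k)$ (the three remaining $e_D$-corners being automatically compact), i.e.\ $a_k\in I_k$. Thus $1_{B_k}x\in I_k\oplus\lK(X_k^\circ)=J_0\cap B_k$ for every $k$.

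It remains to deduce $x\in J_0$, i.e.\ to kill the $\Psi(D)$-component of $x$; this is where the hypotheses enter and is the crux of the proof. If $\calI$ is finite then $B=\bigoplus_{k\in\calI}B_k$, so $x=(1_{B_k}x)_{k}\in\bigoplus_{k\in\calI}(J_0\cap B_k)=J_0$. For infinite $\calI$, write $x=x_0+\Psi(d)$ uniquely with $x_0\in\bigoplus_{k\in\calI}B_k$ and $d\in D$ (uniqueness uses that $\Psi|_D$ is isometric). If $I_k=\{0\}$ for every $k$, then $1_{B_k}x\in\lK(X_k^\circ)$ has vanishing $A_k$-component, so the $A_k$-component of $(x_0)_k$ equals $-d$; letting $k$ run to infinity and using $(x_0)_k\to 0$ forces $d=0$. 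If instead $D\to\lL(X_k^\circ)$ is faithful for every $k$, then the $\lL(X_k^\circ)$-component of $1_{B_k}x$ equals that of $(x_0)_k$ plus $e_D^\perp\phi_{X_k}(d)e_D^\perp=\phi_{X_k}(d)|_{X_k^\circ}$; by the first step its norm tends to $0$ while the contribution of $(x_0)_k$ tends to $0$, so $\|\phi_{X_k}(d)|_{X_k^\circ}\|\to 0$, and since a faithful $*$-homomorphism is isometric, $\|d\|=\|\phi_{X_k}(d)|_{X_k^\circ}\|$ for every $k$, whence $d=0$. In either case $x=x_0\in\bigoplus_{k\in\calI}(J_0\cap B_k)=J_0$. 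Finally, when $I_k=\{0\}$ for every $k$ one has $J_0=\bigoplus_{k\in\calI}\lK(X_k^\circ)=J_\frakX$ by Lemma \ref{lem_J_X}, so $J=J_0=J_\frakX$.

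The main obstacle I anticipate is the infinite-index bookkeeping in the last paragraph --- pinning down precisely what each of the two alternative hypotheses provides and ruling out a nonzero $\Psi(D)$-component of $x$ --- whereas the first two steps are essentially formal once the block-diagonal picture and Lemma \ref{lem_cpt_tensor} are available.
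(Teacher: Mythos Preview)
Your argument is correct and follows the same route as the paper: write $x$ in the form $\Psi(d)+(\text{element of }\bigoplus_k B_k)$, show that each block $1_{B_k}x$ lies in $I_k\oplus\lK(X_k^\circ)$ via Lemma~\ref{lem_cpt_tensor}, and then in the infinite-$\calI$ case kill the diagonal part $d$ using one of the two hypotheses. Your write-up is in fact more explicit than the paper's---in particular, you spell out the step $\|S_k\otimes 1\|\to 0$ (diagonal corners of a compact on a direct sum vanish asymptotically) that underlies the paper's terse chain of equalities in the faithful-action case, and you make the $d=0$ conclusion in the $I_k=\{0\}$ case explicit where the paper only says ``thus $J=J_0$''.
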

\begin{proof}
Let $x=(\Psi_k(a_k+d)+K_k)_{k\in\calI} \in J$ be arbitrary
such that $d\in D$ and $(\Psi_k(a_k)+K_k)_k\in \bigoplus_kB_k$.
We observe that $a_k +d \in I_k$ for $k\in\calI$.
Thus, if $\calI$ is finite or $I_k=\{0\}$ for $k\in\calI$,
we have $J=J_0$.
Assume that $\calI$ is infinite and the left action $D\to\lL(X_k^\circ)$ is faithful for $k\in\calI$.
We then have $\|d\|= \inf_{k\in\calI} \|e_D^\perp \phi_{X_k}(d) \| = \|e_D^\perp\phi_{X_k}(a_k)e_D^\perp+K_k \| =0$.
Thus, we get $d=0$.
\end{proof}
For any ideal $J'\subset J$, the \emph{relative Cuntz--Pimsner algebra} $\calO(J',\frakX)$
is the quotient of $\calT(\frakX)$
by the ideal generated by $(\varphi_\infty-\psi_{\tau_\infty}\circ \phi_\frakX)(J')$
(c.f.~\cite{Muhly_Solel}).
We observe that  $\calO(J_0,\frakX)$
is isomorphic to the quotient of $\DT(A,E)$ by the ideal
$\bigoplus_{k\in\calI}\lK(Y_kI_k)$,
generated by
$(\pi-\psi_t\circ \phi_\frakX)(\bigoplus_{k\in\calI}I_k)=\ospan\{xe_{A_k} \mid x \in I_k, k\in\calI\}$.
Thus, if $I_k\neq \{0\}$ for some $k\in\calI$, then $\DT(A,E)$ is not simple.

In the group case as above, by Remark \ref{rem_group}, $[\Lambda:\Gamma_k] <\infty$ if and only if $I_k \neq \{0\}$ if and only if $I_k=A_k$.
Thus, when $[\Lambda:\Gamma_k]<\infty$ for $k=1,2$,
then we have $c_0(\bfV) \rtimes_\red \Gamma \cong \lK(Y_1)\oplus \lK(Y_2)$ and
$C(\partial\bfT)\rtimes_\red \Gamma \cong \calO(J, \frakX)$.

As in the group case,
one could expect that $\calO(J,\frakX)$ would be a more appropriate or smaller algebra
containing $A$ than $\calO(\frakX)$.
In this viewpoint, it is natural to ask
whether the quotient map from $\calO(\frakX) \to \calO(J,\frakX)$ is injective on $A$,
and the following gives a sufficient condition.
\begin{proposition}
If $E_k$ is nondegenerate and the left action $D \to \lL(X_k^\circ)$ is injective for all $k\in \calI$,
then the restriction of the quotient map $q\colon \DT(A,E) \to \partial \bfT(A,E):=\DT(A,E)/\bigoplus_{k\in\calI}\lK(Y_kI_k)$ to $A$ is injective.
\end{proposition}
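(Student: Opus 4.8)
The plan is to show that $A\cap\ker q=\{0\}$ inside $\DT(A,E)$; since an injective $*$-homomorphism of $\rmC^*$-algebras is isometric, this is exactly what we need. By definition $\ker q=\bigoplus_{k\in\calI}\lK(Y_kI_k)$, and under $\lK(Y_kI_k)\subset\lK(Y_k)\subset\lL(Y_k)$ and $\lL(Y)=\prod_k\lL(Y_k)$ this ideal lies in $\bigoplus_{k\in\calI}\lK(Y_k)$.

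First I would build a representation of $\partial\bfT(A,E)$ that is faithful on $A$. Using the injective $*$-homomorphism $\widetilde\phi_Z\colon\lL(Y)\to\lL(Z)$ with $Z=\bigoplus_k Y_k\otimes_{A_k}A$ from Eq.~(\ref{eq_YZ}), note that the natural map $\lL(Y_k)\to\lL(Y_k\otimes_{A_k}A)$ sends $\theta_{\xi,\eta}$ to $\theta_{\xi\otimes 1,\eta\otimes 1}$, hence carries $\lK(Y_k)$ into $\lK(Y_k\otimes_{A_k}A)$. Therefore $\widetilde\phi_Z$ maps $\ker q\subset\bigoplus_k\lK(Y_k)$ into the ideal $\lK(Z)=\bigoplus_k\lK(Y_k\otimes_{A_k}A)$ of $\lL(Z)$, so the composite
\[
\DT(A,E)\hookrightarrow \lL(Z)\too \lL(Z)/\lK(Z)
\]
kills $\ker q$ and factors through a $*$-homomorphism $\rho\colon\partial\bfT(A,E)\to\lL(Z)/\lK(Z)$. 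For $a\in A$ we have $\widetilde\phi_Z(\phi_Y(a))=\bigoplus_k(\phi_{Y_k}(a)\otimes 1)$, so $\rho(q(a))=0$ forces $\phi_{Y_k}(a)\otimes 1\in\lK(Y_k\otimes_{A_k}A)$ for every $k$; as $A$ is an injective $A_k$-$A$ $\rmC^*$-correspondence, Lemma \ref{lem_cpt_tensor} then yields $\phi_{Y_k}(a)\in\lK(Y_k)$ for every $k$. It thus suffices to prove: \emph{under the hypotheses, if $\phi_{Y_k}(a)\in\lK(Y_k)$ for some $k\in\calI$ then $a=0$} — because then $A\cap\ker q\subset\ker(\rho\circ q)\cap A=\{0\}$.

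For this claim I would use the unitary $S_k$ to identify $Y_k\cong X(r,k)\otimes_D A_k$ and split $X(r,k)$ into its word-length layers $X(r,k)=\bigoplus_{m\ge0}X(r,k)_m$, with $X(r,k)_0=\xi_0D$ and $X(r,k)_m=\bigoplus_{\iota\in\calI_m,\ \iota(m)\ne k}X^\circ_{\iota(1)}\otimes_D\cdots\otimes_D X^\circ_{\iota(m)}$. These layers are mutually orthogonal, so the projections $p_N\in\lL(Y_k)$ onto $\bigoplus_{m\le N}X(r,k)_m\otimes_DA_k$ increase strictly to $1$, and for any $T\in\lK(Y_k)$ one has $\|(1-p_N)T\|\to0$. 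On the other hand, each $\phi_{Y_k}(c)$ with $c$ a reduced word shifts word-length by a bounded amount, and for a general nonzero $a\in A$ one controls the length-preserving part of $\phi_{Y_k}(a)$ at a high layer: evaluated on a layer-$(N{+}1)$ vector $w_1\otimes\cdots\otimes w_{N+1}\otimes b$ with $w_1\in X^\circ_j$ for a fixed $j\ne k$, it reproduces on the leftmost tensor factor the left action of $D$ (together with the contributions of the first letters of the reduced-word part of $a$) on $X^\circ_j$, amplified by the identity on the remaining factors. This gives a bound $\|(1-p_N)\,\phi_{Y_k}(a)\,(1-p_N)\|\ge c_a$ with $c_a>0$ \emph{independent of $N$}, whose positivity rests precisely on the injectivity of $D\to\lL(X^\circ_j)$ for the indices $j\ne k$ (in particular $A_j\ne D$, so every layer $X(r,k)_m$ is nonzero and the tower never stabilizes). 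Comparing with $\|(1-p_N)\phi_{Y_k}(a)\|\to0$ gives $a=0$, proving the claim and hence the proposition.

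I expect the genuine difficulty to be the last step, namely making the layer-diagonal lower bound precise for an arbitrary $a$ (not merely a word or an element of $D$): one must rule out cancellation between the contribution of $E(a)$ and those of the reduced-word components of $a^\circ$, and this is exactly where the hypothesis ``$D\to\lL(X^\circ_j)$ injective'' is used in an essential way. This step is the $\rmC^*$-algebraic analogue of the counting argument ``$[\Gamma:\Gamma_k]=\infty$'' that underlies Remark \ref{rem_group} in the group case.
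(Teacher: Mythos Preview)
Your overall strategy matches the paper's: reduce to the claim $\phi_{Y_k}(A)\cap\lK(Y_k)=\{0\}$, then contradict compactness by exhibiting vectors of arbitrarily high word-length on which $a$ acts with norm bounded below. Two comments.

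First, the reduction is over-engineered. Since $\ker q\subset\bigoplus_k\lK(Y_k)$ sits inside $\lL(Y)$ and $A$ is embedded via $\phi_Y=\prod_k\phi_{Y_k}$, any $a\in A\cap\ker q$ already has $\phi_{Y_k}(a)\in\lK(Y_k)$ for every $k$; the detour through $\widetilde\phi_Z$, $\lK(Z)$ and Lemma~\ref{lem_cpt_tensor} is correct but unnecessary.

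Second, your final step is where the real gap lies, as you yourself flag. Working on $Y_k$ directly, the ``length-preserving part'' of $\phi_{Y_k}(a)$ on a high layer is \emph{not} simply the $D$-action on the leftmost slot: every reduced-word component of $a$ contributes recursively through cancellations with $w_1,w_2,\dots$, and it is not clear how to extract a uniform $c_a>0$ from this description. The paper avoids this analysis entirely by passing from $Y_k$ to $X\cong Y_k\otimes_{A_k}X_k$ (this is where nondegeneracy of $E_k$ is used, to make $\lL(Y_k)\to\lL(X)$ injective); under this map $\lK(Y_k)$ becomes $\rmC^*(\phi_X(A)P_{X_k})$ with $P_{X_k}$ the projection onto $\xi_0D\oplus X_k^\circ$. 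On $X$ the right-tensor structure gives the exact identity $a\cdot(\zeta\otimes\zeta')=(a\zeta)\otimes\zeta'$, so one can build $\zeta_{n+1}=\zeta_n\otimes\zeta'$ inductively: given $\zeta_n$ with $\|\langle a\zeta_n,a\zeta_n\rangle\|>\delta$, the injectivity of $D\to\lL(X^\circ_{\iota(m+n+1)})$ (hence isometry) furnishes a unit vector $\zeta'$ with $\|\langle\zeta',\langle a\zeta_n,a\zeta_n\rangle\zeta'\rangle\|>\delta$, and then $\|a\zeta_{n+1}\|^2>\delta$. No diagonal-block estimate is needed, and the role of the hypothesis is transparent at each inductive step. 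This change of module is the missing ingredient that turns your sketch into a proof.
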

\begin{proof}
It suffices to show that $\phi_{Y_k}(A)\cap \lK(Y_k)=\{0\}$ for $k\in \calI$.
Since $E_k$ is nondegenerate, this is equivalent to $\phi_X(A) \cap \rmC^* (\phi_X(A)P_{X_k}) =\{0\}$ for $k\in \calI$,
where $P_{X_k}$ is the projection onto $\xi_0D\oplus X_k^\circ$.
Fix an arbitrary element $a\in \phi_{X}(A)$ of norm one.
Then there exist $m\in\lN, \iota \in \calI_m$
and a unit vector $\zeta_1$ in $ X_{\iota(1)}^\circ \otimes_D \cdots \otimes_D X_{\iota(m)}^\circ$
such that $\| a \zeta_1\| > \delta > 0$.
We extend $\iota$ to $\lN$ in such a way that $\iota(j) \neq \iota (j+1)$ for $j\in\lN$.
To see that $a \notin \rmC^* (\phi_X(A)P_{X_k})$,
it is enough to find
a vector $\zeta_n$ in $X_{\iota(1)}^\circ \otimes_D \cdots \otimes_D X_{\iota(m+n)}^\circ$
such that $\| \zeta_n \| \leq 1$ and $\| a \zeta_n \| >\delta$ for every $n\in\lN$.
Assume that we have chosen $\zeta_j$'s up to $j=n$.
Since $\i<a \zeta_n, a \zeta_n >\in D$ satisfies
$ \delta < \|\i<a \zeta_n, a \zeta_n > \| \leq 1$
and $D$ acts on $X_{\iota(m+n+1)}^\circ$ faithfully,
there exists a unit vector $\zeta' \in X_{\iota(m+n+1)}^\circ$
such that $\|\i<{ \zeta', \i< a \zeta_n, a \zeta_n> \zeta'} > \| >\delta$.
Hence, $\zeta_{n+1} := \zeta_n \otimes \zeta'$ is the desired one.
\end{proof}
We close this section by the next proposition generalizing \cite[Theoerm 4.9]{Okayasu}.
Set $\calD:= \bigoplus_{k\in\calI}D1_{B_k} + \Psi(D)  \cong (\lC1 + c_0 (\calI)) \otimes D$. Define the Hilbert $\calD$-module $\frakY:=\bigoplus_{k\in \calI}X_k^\circ\otimes_{D}1_{B_k}^\perp \calD \subset \frakX$
and the left action $\phi_\frakY (d) = \phi_\frakX(d)|_\frakY$ for $d\in\calD$.
\begin{proposition}[{c.f.~\cite[Theorem 4.9]{Okayasu}}]
If $E_k$ is nondegenerate, the left action $D \to \lL (X_k^\circ)$ is injective,
and $I_k = A_k$ for every $k\in\calI$,
then $\partial\bfT(A,E)$ is isomorphic to $\calO(\frakY)$.
\end{proposition}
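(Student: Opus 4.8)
The plan is to produce an explicit covariant representation of $\frakY$ on $\partial\bfT(A,E)$ whose image is all of $\partial\bfT(A,E)$, and then invoke the gauge-invariant uniqueness theorem (Theorem \ref{thm_GIU}) to conclude that the canonical surjection $\calO(\frakY)\to \partial\bfT(A,E)$ is an isomorphism. First I would observe that since $I_k=A_k$ for all $k$, the relative Cuntz--Pimsner algebra $\calO(J,\frakX)=\calO(\frakX)/(\text{ideal})$ is, by the discussion immediately preceding, identified with $\partial\bfT(A,E)=\DT(A,E)/\bigoplus_k\lK(Y_kI_k)$; under this identification the images of $P_k$ and of the generators $\pi(\Psi_k(a))=P_kaP_k$ and $t(b\xi_{k\bar k})=bP_k^\perp$ descend. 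Denote the quotient map by $q$. By the previous proposition $q|_A$ is injective (this uses faithfulness of $D\act X_k^\circ$ together with nondegeneracy of $E_k$), which will provide the injectivity input for Theorem \ref{thm_GIU}.

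Next I would build the representation. Set $\bar\pi := q\circ\pi|_\calD \colon \calD\to\partial\bfT(A,E)$ and $\bar t := q\circ t|_{\frakY}\colon \frakY\to\partial\bfT(A,E)$; these are well-defined because $\calD\subset B$ and $\frakY\subset\frakX$, and $(\bar\pi,\bar t)$ is a representation of $\frakY$ by restriction of the relations for $(\pi,t)$. One checks $(\bar\pi,\bar t)$ is covariant: $J_\frakY = \bigoplus_k\lK(X_k^\circ)$ by the same computation as in Lemma \ref{lem_J_X} (the left action of $\calD$ factors through the left action of $D$ on the $X_k^\circ$, and $\phi_\frakY(\theta_{a\xi_k,b\xi_k})=\theta_{a\xi_{k\bar k},b\xi_{k\bar k}}$), and the covariance relation $\pi=\psi_t\circ\phi_\frakX$ on $J_\frakX\supset J_\frakY$ restricts. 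It admits a gauge action, namely the restriction of $\Ad U_z$ from the proof of Theorem \ref{prop_cuntz_pimsner}. For injectivity of $\bar\pi$: $\calD\cong(\lC 1+c_0(\calI))\otimes D$, and $\bar\pi$ sends $1\otimes d\mapsto q(\phi_Y(d))$ and the minimal projection $1_{B_k}$ to $q(P_k)$; since $q|_A$ is injective and the $q(P_k)$ are nonzero orthogonal projections commuting with $q(A)\ni q(\phi_Y(d))$ with $q(P_k)q(\phi_Y(d))=q(P_k aP_k)$ a faithful compression when restricted appropriately, one gets $\bar\pi$ injective. Then covariance plus the second half of Theorem \ref{thm_GIU} gives that $\calO(\frakY)\to\rmC^*(\bar\pi,\bar t)$ is an isomorphism.

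The remaining point — and I expect this to be the main obstacle — is to show $\rmC^*(\bar\pi,\bar t)=\partial\bfT(A,E)$, i.e. that the generators $q(P_kaP_k)$ for general $a\in A_k$ lie in the subalgebra generated by $\bar\pi(\calD)$ and $\bar t(\frakY)$. The issue is that $\pi(\Psi_k(a))=P_kaP_k$ involves $\Psi_k(a)\in B_k$, whereas $\bar\pi$ only sees $\calD\subset B$; one must recover the full $B_k$ inside $\rmC^*(\bar\pi,\bar t)$. Here I would use the hypothesis $I_k=A_k$ decisively: modulo $\lK(Y_kI_k)$ one has $q(P_k a P_k)$ expressible via rank-one-type elements $q(aP_k^\perp)q(bP_k^\perp)^*$ coming from $\bar t(a^\circ\xi_{k\bar k})\bar t(b^\circ\xi_{k\bar k})^*$ plus $D$-terms; concretely, in $B_k=A_k\oplus\lK(X_k^\circ)$ the element $\Psi_k(a)$ differs from $\psi_{t_k^\circ}(\phi_{X_k}(a))+\Psi_k(E_k(a))$ by an element of $I_k\oplus 0=A_k\oplus 0$, which dies in the quotient defining $\partial\bfT$, while $\psi_{t_k^\circ}(\phi_{X_k}(a))$ is built from $t_k^\circ$ and $D$. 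Pushing this through $q$ and using $\bar t(\frakY)$, $\bar\pi(\calD)$ then exhibits $q(P_kaP_k)$ in $\rmC^*(\bar\pi,\bar t)$; combined with the decomposition $a=P_kaP_k+P_ka^\circ P_k^\perp+P_k^\perp a^\circ P_k+E(a)P_k^\perp$ (as in the proof of Theorem \ref{prop_cuntz_pimsner}) this gives $q(A)\subset\rmC^*(\bar\pi,\bar t)$ and hence, with $q(P_k)=\bar\pi(1_{B_k})$, equality $\rmC^*(\bar\pi,\bar t)=\partial\bfT(A,E)$. This completes the identification $\partial\bfT(A,E)\cong\calO(\frakY)$.
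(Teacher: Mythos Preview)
Your strategy is exactly the paper's: define $(\bar\pi,\bar t)$ by composing $q$ with $(\pi,t)$ restricted to $\calD$ and $\frakY$, verify it is injective, covariant, and gauge-invariant, then invoke Theorem~\ref{thm_GIU}; for surjectivity, use that $I_k=A_k$ forces $\Psi_k(a)$ to differ from an element of $\lK(X_k^\circ)$ by something in $I_k\oplus 0$, which $q$ kills, so that $q(P_kaP_k)=\psi_{\bar t}(\phi_\frakX(\Psi_k(a)))$ lies in $\rmC^*(\bar\pi,\bar t)$. The paper records this last step as the decomposition $q(a)=\psi_{\bar t}(\phi_\frakX(\Psi_k(a)))+\bar t(a\xi_{k\bar k})+\bar t(a^*\xi_{k\bar k})^*$ for $a\in A_k^\circ$.

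There is, however, a genuine slip in your covariance check: you assert $J_\frakY=\bigoplus_k\lK(X_k^\circ)$ ``by the same computation as in Lemma~\ref{lem_J_X}'', but this cannot be right as stated, since $J_\frakY$ is by definition an ideal of $\calD\cong(\lC 1+c_0(\calI))\otimes D$, which does not contain $\lK(X_k^\circ)$ in general. In fact, under the hypotheses the left action $\phi_\frakY$ is injective (because $D\to\lL(X_k^\circ)$ is), and since $I_k=A_k$ the action of $D$ on each $X_k^\circ$ is by compacts; hence $J_\frakY=\phi_\frakY^{-1}(\lK(\frakY))\supset\bigoplus_k D1_{B_k}$ (and equals $\calD$ when $\calI$ is finite). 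The covariance you need is therefore $\bar\pi(d1_{B_k})=\psi_{\bar t}(\phi_\frakY(d1_{B_k}))$ for $d\in D$, which does hold: writing $\Psi_k(d)=(d,0)+(0,e_D^\perp\phi_{X_k}(d)e_D^\perp)\in I_k\oplus\lK(X_k^\circ)$, the first summand dies under $q\circ\pi$ and the second gives $\psi_{\bar t}(\phi_\frakY(d1_{B_k}))$ by the covariance of $(\pi,t)$ on $J_\frakX$. So your argument can be repaired, but the justification you gave for covariance (restricting the relation on $J_\frakX$ to a putative $J_\frakY=\bigoplus_k\lK(X_k^\circ)$) is not the right one.
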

\begin{proof}
Let $(\bar{\pi},\bar{t})$ be the representation of $\frakY$ on $\dT(A,E)$ given by
$\bar{\pi} (d 1_{B_k}) = q(d P_k )$ and $\bar{t} (a\xi_{k\ol{k}})=q(a P_k^\perp)$ for
$k\in \calI$, $d \in D$ and $a \in B_k$.
Then $(\bar{\pi},\bar{t})$ is injective, covariant and admits a gauge action.
The surjectivity of the induced map $\rho\colon \calO(\frakY) \to \partial\bfT(A,E)$
follows from the decomposition
$ q(a)=  \psi_{\bar{t}} (\phi_\frakX (\Psi_k(a)) )  + \bar{t} (a \xi_{k\ol{k}}) + \bar{t} (a^*\xi_{k\ol{k}})^*$ for $a \in A_k^\circ$.
\end{proof}
%
%%%%%%%%%%%%%%%%%%%%%%%
%
%				AMALGAME
%
%%%%%%%%%%%%%%%%%%%%%%%%
\if0 %%%%%%%%%%%%%%%%%%%%%%%%%%%%%%%%%%%%%%%%%%%%%%%%%%%%
\subsection{Reduced amalgamated free products}\label{ss_DT_amal}
In the case when $\calI=\{1,2\}$,
$\DT(A,E)$ also admits a reduced amalgamated free product structure.
We extend $E_{A_k}$ to $\DT(A,E)$ by using $e_{A_k}$ and still denote it by $E_{A_k}$.
Then, the composition $(E_1 \oplus E_2 )\circ (E_{A_1} \oplus E_{A_2} )$ defines a
conditional expectation $\calE \colon \Delta\bfT(A,E) \to D P_1 \oplus D P_2 \cong D \oplus D$.
Set $\calB_k := \rmC^* ( A_k , P_1,P_2 ) \subset \Delta\bfT(A,E)$, $L_k =\rmC^* (t(X_k^\circ), DP_k^\perp)$,
and $\calE_k := \calE|_{\calB_k}$ for $k=1,2$,
where we write $t(X_k^\circ 1_{B_k}^\perp) = t (X_k^\circ)$ for short.
Then we have the following matrix representations
\[
L_k = \begin{bmatrix}
  \pi (\lK (X_k^\circ ) )& t (X_k^\circ) \\
  t(X_k^\circ )^* & D P_k^\perp
\end{bmatrix}
\quad
\subset
\quad
\calB_k =\begin{bmatrix}
  \pi (B_k )  &  t (X_k^\circ)  \\
  t (X_k^\circ )^*  & DP_k^\perp
\end{bmatrix}
\]
with respect to the decomposition $Y = P_k Y \oplus P_k^\perp Y$.
Note that $L_k\cong \lK(X_k)$ naturally and the inclusion $L_k \subset \calB_k$ is
identical to that in Eq.\,(\ref{eq_B}).

\begin{proposition}\label{prop_DT_amal}
It follows that $(\Delta\bfT(A,E), \calE )\cong (\calB_1, \calE_1 ) \ast_{D\oplus D} (\calB_2, \calE_2 )$.
\end{proposition}
\begin{proof}
Let $\sigma =\sigma_1 \oplus \sigma_2 \colon \Delta\bfT(A,E) \to \lL ( X^{(1)} \boxplus X^{(2)})$ be
as in the previous subsection.
Then, the GNS representation associated with $\calE$ is
given by $(X^{(1)} \boxplus X^{(2)},  \sigma , \xi_0^{(1)} \oplus \xi_0^{(2)} )$.
Since $\Delta\bfT(A,E)$ is generated by $\calB_1$ and $\calB_2$, we only have to check the freeness condition
that $\calE ( x_m \cdots x_1 ) = 0$ for any $m \in \lN$, $\iota \in \calI_m$,
and $x_i \in \calB_{\iota(i)}^\circ$ for $i=1,\dots, m$.
Note that
\[
\calB_1^\circ  =  \begin{bmatrix} \pi ( \Psi_1 (A_1^\circ ) + \lK ( X_1^\circ ) ) & t( X_1^\circ) \\ t(X_k^\circ )^* & 0 \end{bmatrix},\quad\quad
\calB_2^\circ  =  \begin{bmatrix} 0 &  t(X_2^\circ )^* \\ t(X_2^\circ) & \pi ( \Psi_2 (A_2^\circ ) + \lK ( X_2^\circ ) ) \end{bmatrix}.
\]
Thus, we may assume that each $x_i$ is of the form $P_{\iota (i)} y_i P_{\iota (i)} + P_{\iota (i)}^\circ\pi ( K_i)P_{\iota (i)}^\circ + P_{\iota (i)} z_i P_{\iota (i)}^\perp + P_{\iota (i) }^\perp w_i P_{\iota (i)} $ for $y_i,z_i,w_i \in A_{\iota (i)}^\circ$ and $K_i \in \lK ( X_{\iota (i)}^\circ )$.
When $\iota (1) =1$,
one has $\sigma_{1} ( x_m  \cdots x_1 ) \xi_0^{(1)} = \phi_X ( z_m  \cdots z_2 y_1 ) \xi_0^{(1)}$
and $\sigma_{2} ( x_m  \cdots x_1 ) \xi_0^{(2)} =   \phi_X ( z_m \cdots z_1 ) \xi_0^{(2)}$,
and thus $\calE ( x_m \cdots  x_1 ) = 0$.
The case when $\iota(1)=2$ is similar.
\end{proof}
Note that the inclusion map $ A_k \hookrightarrow \calB_k$ is compatible with the conditional expectations
$E_k$ and $\calE_k$ for each $k=1,2$.
Thus, the inclusion $A \subset \DT(A,E)$ is compatible with the reduce amalgamated free product structures.
\fi
\section{Applications}\renewcommand{\thetheorem}{\arabic{section}.\arabic{theorem}}\label{section_app}
In this section, we give proofs of several known results for reduced amalgamated free products
using $\DT(A,E)$.
The next theorem due to Dykema \cite{Dykema} (see \cite{Dykema_Shly,Ricard_Xu} for different proofs)
is an immediate consequence of Corollary \ref{cor_app}.
Note that our proof also says that any reduced amalgamated free product of nuclear $\rmC^*$-algebras
is a subalgebra of a nuclear $\rmC^*$-algebra.
Also, our result does not relay on the facts that exactness and nuclearity pass to quotients \cite{Choi_Effros,Kirchberg,Kirchberg_CAR}.
\begin{theorem}[Dykema \cite{Dykema}]
Reduced amalgamated free products of exact $\rmC^*$-algebras are exact.
\end{theorem}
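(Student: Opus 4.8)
The plan is to deduce the statement directly from Corollary \ref{cor_app}. Let $(A,E) = \bigast_D (A_k, E_k)$ be a reduced amalgamated free product in which every $A_k$ is exact. First I would observe that the amalgamated subalgebra $D$ is then also exact: since $D$ is the range of the conditional expectation $E_1 \colon A_1 \to D$ (equivalently, $D$ embeds in $A_1$ as a $\rmC^*$-subalgebra that is the image of a conditional expectation), and exactness passes to $\rmC^*$-subalgebras, $D$ is exact. This is the only structural input about $D$ that is needed, and it requires no appeal to the permanence of exactness under quotients.

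Next I would invoke Corollary \ref{cor_app}: the $\rmC^*$-algebra $\DT(A,E)$ is exact if and only if every $A_k$ is exact. Since by hypothesis all the $A_k$ are exact, $\DT(A,E)$ is exact. Finally, by construction $A \subset \DT(A,E)$ (we identify $A$ with $\phi_Y(A)$), so $A$ is a $\rmC^*$-subalgebra of an exact $\rmC^*$-algebra, hence exact. This also yields the stronger statement advertised before the theorem: in the case of nuclear $A_k$, the same argument with ``exact'' replaced by ``nuclear'' shows $\DT(A,E)$ is nuclear and contains $A$, so every reduced amalgamated free product of nuclear $\rmC^*$-algebras sits inside a nuclear one.

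The only thing to be careful about is the verification that the hypotheses of Corollary \ref{cor_app} are genuinely met, i.e.\ that the $\{(D \subset A_k, E_k)\}_{k \in \calI}$ are exactly of the form required (unital inclusions with nondegenerate conditional expectations); this is part of the standing assumptions on $(A,E)$, so there is nothing to check. No step here is a serious obstacle: the entire content has been front-loaded into the Cuntz--Pimsner identification (Theorem \ref{prop_cuntz_pimsner}) and the semisplit Toeplitz extension (Theorem \ref{prop_semisplit}), which together with Katsura's permanence results for Pimsner algebras give Corollary \ref{cor_app}. The present theorem is then a one-line corollary, and the point worth emphasizing is conceptual rather than technical: because $\DT(A,E)$ is an \emph{ambient} algebra built so transparently from the Pimsner machinery, exactness of $A$ follows by passing to a subalgebra, bypassing the subtler fact that exactness is preserved under quotients.
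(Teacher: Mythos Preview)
Your proposal is correct and matches the paper's own approach exactly: the paper simply states that the theorem ``is an immediate consequence of Corollary \ref{cor_app}'' and notes, as you do, that this avoids the permanence of exactness under quotients and that in the nuclear case one obtains an embedding of $A$ into a nuclear $\rmC^*$-algebra. Your remark about $D$ being exact is already absorbed into the proof of Corollary \ref{cor_app} itself, so it need not be repeated here.
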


The following generalizes Ozawa's result \cite{Ozawa_RIMS} for nuclearity.
\begin{theorem}
Let $(A,E)=(A_1,E_1)\ast_D(A_2,E_2)$ be a reduced amalgamated free product
and assume that the image of the GNS representation
of $E_1$ contains the Jones projection
and $E_2$ is nondegenerate.
Then, $A$ is nuclear if and only if so are $A_1$ and $A_2$.
Also, one has $\Lambda_\cb(A)=\max\{\Lambda_\cb(A_1),\Lambda_\cb(A_2)\}$.
\end{theorem}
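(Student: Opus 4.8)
The plan is to reduce the nuclearity (and CBAP) statement for $A$ to the corresponding statement for $\DT(A,E)$, which is already handled by Corollary \ref{cor_app}. The key observation is that under the hypothesis that $\phi_{X_1}(A_1)$ contains the Jones projection $e_D$, the free component $A_1$ swallows the compacts, so that $\DT(A,E)$ does not really add anything new over $A$; more precisely, I expect to show that $A$ is a full corner of $\DT(A,E)$, or at least that $A$ and $\DT(A,E)$ sit inside each other in a way that transfers nuclearity and CBAP in both directions.

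First I would spell out the consequence of the hypothesis. Since the GNS image of $E_1$ contains the Jones projection, there is an element $a_0 \in A_1$ with $\phi_{X_1}(a_0) = e_D$, hence $a_0^2 = a_0 = a_0^*$ and $E_1(a_0 b a_0) = E_1(b) a_0$ for $b\in A_1$, i.e. $a_0$ behaves like a Jones projection inside $A_1$ itself. In the notation of \S\S\ref{ss_semisplit} applied to $(D\subset A_1, E_1)$, this means the map $\calB_1 \to A_1$ splitting off $\lK(X_1)$ has a canonical section realizing $e_1 \mapsto 1-a_0$, so that actually $I_1 = \phi_{X_1}^{-1}(\lK(X_1)) = A_1$ and $\calB_1 \cong A_1 \oplus \lK(X_1)$ with $A_1$ itself containing a copy of $\lK(X_1) \cong \lK(X_1^\circ) \oplus \text{(corner)}$. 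Consequently, inside $\DT(A,E)$ the projection $1-P_1$ is Murray--von Neumann equivalent, via $a_0$, to a projection in $A$, and I would check that the ideal generated by $1-P_1$ in $\DT(A,E)$ already lies in $A$; combined with Corollary \ref{cor_amalgam} realizing $\DT(A,E)$ as the (full = reduced) amalgamated free product $\calB_1 \ast_{D\oplus D} \calB_2$, and with $\calB_1 \cong A_1 \oplus \lK(X_1)$, one sees $\DT(A,E)$ is generated by $A$ together with the single projection $1-P_2 = P_1$, which is now absorbed: $P_1 = a_0 + P_1^\circ$ and $P_1^\circ$ is a norm limit of elements of $A$ times $a_0$-type compressions. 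The upshot I am aiming for is an honest isomorphism $\DT(A,E) \cong A$, or failing that, the statement that $A \subset \DT(A,E)$ is a full hereditary (indeed corner) subalgebra.

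Given such a relation, the theorem follows in two strokes. If $A_1, A_2$ are nuclear then so is $\DT(A,E)$ by Corollary \ref{cor_app}, hence so is its corner $A$ (nuclearity passes to hereditary subalgebras); and $\Lambda_\cb(\DT(A,E)) = \max\{\Lambda_\cb(A_1),\Lambda_\cb(A_2)\}$ by the same corollary, while $\Lambda_\cb$ is invariant under passage to full hereditary subalgebras and under stabilization, giving $\Lambda_\cb(A) = \Lambda_\cb(\DT(A,E))$. Conversely, $A_1$ and $A_2$ are the ranges of conditional expectations from $A$ (using $E_{A_k}$ for $k=1,2$; here nondegeneracy of $E_2$ guarantees $E_{A_2}$ is a genuine conditional expectation onto $A_2$), so nuclearity and CBAP of $A$ descend to $A_1$ and $A_2$ with $\Lambda_\cb(A_k)\le \Lambda_\cb(A)$, yielding the reverse inequality and completing the equality.

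The main obstacle I anticipate is the first step: cleanly establishing that the hypothesis forces $\DT(A,E)$ to collapse onto (a corner of) $A$. One must be careful that "the GNS image of $E_1$ contains the Jones projection" is exactly the condition $e_D \in \phi_{X_1}(A_1)$, deduce $I_1 = A_1$, and then track through the Cuntz--Pimsner picture of Theorem \ref{prop_cuntz_pimsner} how $P_1$ (equivalently the creation operators $t(\xi_{1\bar 1})$) get expressed in terms of $A$ and the element $a_0$; the identity $P_1^\circ = \sum_j S_j(P_{(\ell,1)}^\perp P_{(r,1)}\otimes 1)S_j^*$ from Eq.~\eqref{eq_P_k} should be rewritten using $a_0$ so that $P_1 \in \ospan\{ b a_0 c \mid b,c\in A\} \subset A$, and then $\DT(A,E) = \rmC^*(A,P_1,P_2) = \rmC^*(A,P_1) = A$. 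If a full isomorphism $\DT(A,E)\cong A$ is not literally available, the fallback of exhibiting $A$ as a full hereditary subalgebra of $\DT(A,E)$ (via the projection $P_2$ or its complement, whichever has full central support) still suffices, since nuclearity, exactness, and the Haagerup constant are all preserved under that relation.
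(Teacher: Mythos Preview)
Your overall strategy—reduce to Corollary \ref{cor_app} by relating $A$ closely to $\DT(A,E)$—is exactly the paper's, but the precise mechanism you propose does not work.

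First, the claim $I_1=\phi_{X_1}^{-1}(\lK(X_1))=A_1$ is not justified: from $\phi_{X_1}(a_0)=e_D$ you only get $a_0\in I_1$, whereas $\phi_{X_1}(1-a_0)=e_D^\perp$ is compact only when $X_1^\circ$ has finite rank, which is not assumed. So in general $I_1\subsetneq A_1$, and the hoped-for decomposition $\calB_1\cong A_1\oplus\lK(X_1)$ with $\lK(X_1)$ already inside $A_1$ fails. More decisively, the identity you need, $P_1\in A$ (equivalently $\DT(A,E)=A$), is false: a direct computation in $\lL(Y)$ gives $a_0 = a_0 e_{A_1} + P_2$, so $P_2 = a_0 - a_0 e_{A_1}$; here $a_0 e_{A_1}$ is a nonzero element of $\lK(Y_1)$ and does not lie in $\phi_Y(A)$ (indeed $\phi_{Y_2}(A)\cap\lK(Y_1)=0$ because $E_2$ is nondegenerate). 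Your fallback—$A$ as a full hereditary subalgebra of $\DT(A,E)$—also collapses, because $A$ and $\DT(A,E)$ share the same unit, so hereditarity would force equality.

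The correct relation is not ``corner'' but ``split quotient''. From $P_2=a_0-a_0 e_{A_1}$ one sees $P_2\in A+\lK(Y_1I_1)$, hence $\DT(A,E)=\rmC^*(A,P_2)=A+\lK(Y_1I_1)$, and since $\lK(Y_1I_1)\cap A=\{0\}$ this yields a \emph{split} exact sequence
\[
0 \longrightarrow \lK(Y_1I_1) \longrightarrow \DT(A,E) \longrightarrow A \longrightarrow 0,
\]
with the inclusion $A\hookrightarrow\DT(A,E)$ as section. Now $\id_A$ factors through $\DT(A,E)$ by $*$-homomorphisms, so nuclearity and the bound $\Lambda_\cb(A)\le\Lambda_\cb(\DT(A,E))$ descend from $\DT(A,E)$ to $A$; combined with Corollary \ref{cor_app} and your (correct) observation that $\Lambda_\cb(A_k)\le\Lambda_\cb(A)$ via $E_{A_k}$, the theorem follows.
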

\begin{proof}
Let $p\in A_1$ be a projection such that $\phi_{X_1}(p)$ is the Jones projection of $E_1$.
A direct computation shows that $p=pe_{A_1} +P_2$ in $\DT(A,E)$.
Set $I_1:=\phi_{X_1}^{-1}(\lK(X_1))$.
Then, $\lK(Y_1I_1)$ is the ideal of $\DT(A,E)$ generated by $I_1e_{A_1}$
and one has $\lK(Y_1I_1) \cap A = \{0\}$.
Since $P_2 \in A +\lK(Y_1I_1)$,
we have $\DT(A,E)=A+\lK(Y_1I_1)$ and the split exact sequence
\[
0 \too \lK(Y_1I_1) \too A + \lK(Y_1I_1) \too A \too 0.
\]
Thus, the assertion follows from Corollary \ref{cor_app}.
\end{proof}

\begin{theorem}[Blanchard--Dykema \cite{Blanchard_Dykema}]
Let $(A,E)=\bigast_D (A_k,E_k)$ and $(\calA,\calE)=\bigast_D (\calA_k,\calE_k)$ be 
reduced amalgamated free products.
\begin{itemize}
\item[(i)] For any unital $*$-homomorphisms $\pi_k \colon A_k \to \calA_k$, $k\in\calI$ such that $\calE_k\circ \pi_k=\pi_k\circ E_k$
there exists a unique $*$-homomorphism $\pi \colon A \to \calA$ of which restriction to $A_k$ equals to $\pi_k$ for every $k\in\calI$.
Moreover, if $\pi_k$ is injective for every $k\in \calI$, then so is $\pi$.
\item[(ii)] Assume that $D=\calD$ and $E_k$ and $\calE_k$ are nondegenerate
for $k\in\calI$.
For any UCP maps $\varphi_k\colon A_k \to \calA_k$ such that $\varphi_k|_D =\id$,
there exists a unique UCP map $\varphi \colon A \to \calA$ such that
\[
 \varphi(a_1a_2\cdots a_n)=\varphi_{\iota(1)}(a_1)\varphi_{\iota(2)}(a_2) \cdots \varphi_{\iota(n)}(a_n)
\]
for any reduced word $a_1 \cdots a_n$ with $a_j\in A_{\iota(j)}^\circ$ and $\iota \in \calI_n$.
\end{itemize}
\end{theorem}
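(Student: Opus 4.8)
The plan is to get part (i) essentially for free from the universal description of $\DT$ in Corollary~\ref{cor_univ}, and to get part (ii) from a Stinespring dilation organised through the Fock picture behind $\DT$. Throughout I take the $\pi_k$ (resp.\ the $\varphi_k$) to restrict to the identity on $D$, as is implicit in the hypotheses. For (i), the point is that $\calE_k\circ\pi_k=\pi_k\circ E_k$ with $\pi_k|_D=\id$ forces $\pi_k(\ker E_k)\subseteq\ker\calE_k$, and that the $\pi_k$ agree on $D$; hence they induce a unital $*$-homomorphism from the \emph{algebraic} amalgamated free product of $\{A_k\}_{k\in\calI}$ over $D$ into $\DT(\calA,\calE)$, carrying $A_k$ into $\calA_k$. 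In $\DT(\calA,\calE)$ the projections $\calP_k$ form a copy of $c_0(\calI)$ with minimal projections, and the relation $(1-\calP_k)b(1-\calP_k)=\calE_k(b)(1-\calP_k)$, valid for $b\in\calA_k$, reads at $b=\pi_k(a)$ as $(1-\calP_k)\pi_k(a)(1-\calP_k)=\pi_k(E_k(a))(1-\calP_k)$ --- exactly the image under the $\pi_k$ of the presentation $e_kae_k=E_k(a)e_k$ of $\DT(A,E)$. So Corollary~\ref{cor_univ} yields a unital $*$-homomorphism $\widetilde\pi\colon\DT(A,E)\to\DT(\calA,\calE)$ with $\widetilde\pi|_{A_k}=\pi_k$ and $\widetilde\pi(P_k)=\calP_k$. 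Since $A$ is generated by $\bigcup_kA_k$ in $\DT(A,E)$ and $\calA$ by $\bigcup_k\calA_k$ in $\DT(\calA,\calE)$, the restriction $\pi:=\widetilde\pi|_A$ maps $A$ into $\calA$, restricts to $\pi_k$ on each $A_k$, and is the unique such $*$-homomorphism.

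For the injectivity clause, assume every $\pi_k$ injective. Then $\pi$ carries reduced words of $(A,E)$ to reduced words of $(\calA,\calE)$ (as $\pi_{\iota(j)}(A_{\iota(j)}^\circ)\subseteq\calA_{\iota(j)}^\circ$), so $\calE\circ\pi=\pi\circ E$; and for each $k$ the assignment $\phi_{X_k}(a)\xi_k\mapsto\pi_k(a)\eta$, with $\eta$ the GNS vector of $\calE_k$, extends to an isometric $D$-linear map of the GNS module of $E_k$ into that of $\calE_k$ (isometric since $\|\calE_k(\pi_k(a^*a))\|=\|\pi_k(E_k(a^*a))\|=\|E_k(a^*a)\|$, using $\pi_k|_D=\id$). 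Taking interior tensor products of these maps yields an isometric $D$-linear map $W$ of the GNS module $X$ of $E$ into that of $\calE$, intertwining the left action of $A$ with the left action of $\pi(A)$ through $\pi$; since $E$ and $\calE$ are nondegenerate, both left actions are faithful, so $\pi(a)=0$ forces the left action of $a$ on $X$ to vanish, hence $a=0$. (Alternatively, $\widetilde\pi$ itself is injective by the gauge-invariant uniqueness Theorem~\ref{thm_GIU}: it is degree-preserving for the gauge actions of the two Cuntz--Pimsner algebras, and its restriction to the coefficient algebra $B$ is injective by the same isometry computation applied levelwise.)

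For (ii), the statement is the amalgamated, reduced-free-product analogue of Boca's theorem on completely positive maps (cf.\ \cite{Choda}), and I would prove it by dilation. Viewing each $\varphi_k$ as a UCP map of $A_k$ into the adjointable operators on the GNS module $\calX_k$ of $\calE_k$, Stinespring-dilate it to a unital $*$-representation $\rho_k$ of $A_k$ on a Hilbert $D$-module $\calY_k\supseteq\calX_k$, with $\calX_k$ a complemented, $\rho_k(D)$-invariant submodule (invariance forced by $\varphi_k|_D=\id$) and $\varphi_k=P_{\calX_k}\rho_k(\cdot)|_{\calX_k}$. Next, assemble a free-product Fock-type Hilbert $D$-module $\calZ$ containing the GNS module $\calX$ of $\calE$, built from $\calX$ and the submodules $\calY_k\ominus\calX_k$, together with a $*$-representation $\rho$ of $A=\bigast_D A_k$ on $\calZ$ extending the $\rho_k$ in the customary free-product fashion --- this is exactly the point where the Fock/Toeplitz formalism of \S\ref{section_prelim}--\S\ref{section_pimsner} makes the construction and its bookkeeping transparent --- and set $\varphi(a):=P_{\calX}\rho(a)|_{\calX}$. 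It then remains to see that $\varphi$ lands in $\calA$ and obeys the product formula, and both follow from the identity $P_{\calX}\rho(a_1)\cdots\rho(a_n)|_{\calX}=\varphi_{\iota(1)}(a_1)\cdots\varphi_{\iota(n)}(a_n)$ on reduced words $a_1\cdots a_n$, which comes out of the fact that $\rho(a_j)$ with $a_j\in A_{\iota(j)}^\circ$ sends $\calX$ into the $\calY_{\iota(j)}$-leading summand of $\calZ$ and the resulting compressions telescope.

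Part (i) is routine once Corollary~\ref{cor_univ} is available: existence of $\pi$ is then formal and injectivity is the standard GNS intertwining argument. The main obstacle is in (ii): arranging the free-product module $\calZ$ and the representation $\rho$ carefully enough that the telescoping giving the product formula is genuinely clean, particularly when $\calI$ is infinite and $D\neq\lC$, where one must keep track of which submodules are complemented and of the action of the $\rho_k$ on $D$.
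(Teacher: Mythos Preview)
Your approach to (i) is essentially the paper's: both pass through Corollary~\ref{cor_univ} to obtain a $*$-homomorphism $\DT(A,E)\to\DT(\calA,\calE)$ and restrict to $A$; for injectivity the paper uses precisely your alternative route via Theorem~\ref{thm_GIU} (the covariant representation of $\frakX$ obtained by composing with $\widetilde\pi$ is injective on $B$ when each $\pi_k$ is, and inherits the gauge action of $\DT(\calA,\calE)$). Your first injectivity argument via the GNS isometry $W$ is also fine, but note it uses nondegeneracy of $E$, which part~(i) does not explicitly assume.

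For (ii) you have the right idea --- Stinespring-dilate each $\varphi_k$ and then pass to a free product --- but the paper organises it more economically by bootstrapping from (i). Rather than assembling the Fock-type module $\calZ$ and the representation $\rho$ by hand (the step you correctly flag as the main obstacle), the paper takes the Stinespring dilations $(Z_k,\pi_k)$ with isometries $w_k\colon\calX_k\to Z_k$, equips $\lL(Z_k)$ with the conditional expectation $E_k'$ given by compression to $w_k\xi_k'$, forms the \emph{reduced} amalgamated free product $(L,E')=\bigast_D(\lL(Z_k),E_k')$, and then applies part~(i) to the inclusions $A_k\hookrightarrow\lL(Z_k)$ to produce $\pi\colon A\to L$ directly. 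The interior tensor product of the $w_k$ gives an isometry $w\colon\calX\to X'$ into the GNS module of $E'$, and $\varphi(a):=w^*\pi(a)w$ is the desired UCP map, with the product formula on reduced words immediate from the tensor form of $w$. This is your construction in disguise --- the GNS module $X'$ of $(L,E')$ \emph{is} your $\calZ$ --- but replacing ``the customary free-product fashion'' by a one-line appeal to (i) eliminates all the bookkeeping about complemented submodules and infinite $\calI$ that worried you.
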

\begin{proof}
We prove (i):
By Corollary \ref{cor_univ} the $*$-homomorphisms $\pi_k,k\in\calI$ induce $\rho \colon \DT(A,E) \to \DT(\calA,\calE)$.
The covariant representation of $\frakX$ corresponding to $\rho$ admits a gauge action
since $\DT(\calA,\calE)$ admits a gauge action,
and is injective  whenever $\pi_k$ is injective for $k\in\calI$.
The restriction of $\rho$ to $A$ is the desired one.

We prove (ii):
The argument here is essentially same as the proof of \cite[Proposition 2.1]{Choda}.
Let $(\calX_k,\phi_{\calX_k},\xi_k')$ and $(\calX,\phi_\calX,\xi_0')$
be the GNS representation of $\calE_k$ and $\calE$.
For any $k\in\calI$,
by the Stinespring construction, there exists an $A_k$-$D$ $\rmC^*$-correspondence
$(Z_k,\pi_k)$ and an isometry $w_k\colon \calX_k \to Z_k$ such that
$w_k^*\pi_k(\phi_{X_k}(a))w_k =\phi_{\calX_k}(\varphi_k(a))$ for $a\in A_k$.
Let $E_k'\colon \lL(Z_k) \to D$ be the conditional expectation
given by the compression onto $w_k\xi_k'$.
Let $(L,E')$ be their reduced amalgamated free product
and $X'$ be the GNS Hilbert $\rmC^*$-module associated with $E'$.
Let $w\colon \calX \to X'$ be the isometry defined by
$w (\zeta_1 \otimes \cdots \otimes \zeta_n) =w_{\iota(1)}\zeta_1 \otimes \cdots \otimes w_{\iota(n)}\zeta_n$ for $\iota \in \calI_n$ and $\zeta_k \in X^\circ_{\iota(k)}$ for $k=1,2,\dots,n$.
By (i) we have a $*$-homomorphism $\pi \colon A \to L$ induced from $\pi_k$'s.
Then, the UCP $A \ni a \mapsto w^*\pi (a)w \in \calA$ is the desired one.
\end{proof}

%%%%%%%%%%%%%%%%%%%%%%%%%%%%%%%%
%
%    K-Theory
%
%%%%%%%%%%%%%%%%%%%%%%%%%%%%%%%%
\section{$KK$-theory}\label{section_K}
In this section, we give a new proof of the next theorem due to Fima and Germain \cite{Fima_Germain1}.
Key ingredients of our proof are the right invertibility of the embedding $A \hookrightarrow \DT(A,E)$ in $KK$-theory and exact sequences of $KK$-groups for Cuntz--Pimsner algebras
by Pimsner \cite{Pimsner_free}.
\begin{theorem}[Fima--Germain]\label{thm_FG}
	Let $(A, E) = (A_1 , E_1 ) \ast_D (A_2, E_2 )$ be a reduced amalgamated free product of
	unital separable $\rmC^*$-algebras
	and $i_k \colon D \hookrightarrow A_k$ and $j_k \colon A_k \hookrightarrow A$
	be inclusion maps for $k=1,2$.
	Then, there are two cyclic exact sequences for any separable $\rmC^*$-algebra $P$:
	\[
	\begin{CD}
	KK(P,D) @>(i_{1*},i_{2*}) >> KK(P,A_1) \oplus KK (P, A_2) @>j_{1*}- j_{2*}>>KK( P, A )  \\
	@AAA                                    @.                                       @VVV \\
	KK^1(P,A  ) @<j_{1*}- j_{2*}<< KK^1(P,A_1 )\oplus KK^1 (P, A_2) @<(i_{1*},i_{2*}) <<KK^1( P, D)
	\end{CD}
	\]
	and
	\[ \begin{CD}
	KK(D,P) @<i_1^*- i_2^*<< KK(A_1,P) \oplus KK (A_2,P) @<(j_1^*, j_2^* )<< KK( A, P)  \\
	@VVV                                    @.                                       @AAA \\
	KK^1(A , P ) @>(j_1^*, j_2^* )>> KK^1 (A_1, P )\oplus KK^1 (A_2, P) @>i_1^*- i_2^*>> KK^1 ( D,P).
	\end{CD}
	\]
\end{theorem}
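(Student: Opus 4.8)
The plan is to derive both cyclic sequences from Pimsner's six-term exact sequences of $KK$-groups applied to the Cuntz--Pimsner algebra $\calO(\frakX)\cong\DT(A,E)$ (Theorem \ref{prop_cuntz_pimsner}), and then to contract the result down to the four algebras $D,A_1,A_2,A$ by using that the embedding $\iota\colon A\hookrightarrow\DT(A,E)$ is right invertible in $KK$-theory. Since the Toeplitz extension $0\to\lK(\calF(\frakX)J_\frakX)\to\calT(\frakX)\to\DT(A,E)\to0$ is semisplit (Theorem \ref{prop_semisplit}), it induces genuine six-term exact sequences in each variable of $KK$ \cite{Cuntz_Skandalis}; together with Pimsner's computation \cite{Pimsner_free,Katsura} of the $KK$-theory of the Toeplitz algebra and of the distinguished ideal, this is the ``six-term exact sequence induced from the Toeplitz extension'' that does the work.

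First I would assemble the $KK$-ingredients. By \cite{Pimsner_free,Katsura} the inclusion $\iota_B\colon B=B_1\oplus B_2\hookrightarrow\calT(\frakX)$ of the coefficient algebra is a $KK$-equivalence, and $\lK(\calF(\frakX)J_\frakX)$ is Morita equivalent, hence $KK$-equivalent, to $J_\frakX=\lK(X_1^\circ)\oplus\lK(X_2^\circ)$ (Lemma \ref{lem_J_X}); in turn each $\lK(X_k^\circ)$ is $KK$-equivalent to $D$ through the $\lK(X_k^\circ)$-$D$ bimodule $X_k^\circ$, the possible failure of fullness being handled by Lemma \ref{lem_nonfull}. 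After these identifications the six-term sequence reads
\[
KK(P,J_\frakX)\xrightarrow{\alpha}KK(P,B)\xrightarrow{r_*}KK(P,\DT(A,E))\xrightarrow{\partial}KK^1(P,J_\frakX)\longrightarrow\cdots,
\]
with $r=q\circ\iota_B$, and Pimsner's formula identifies $\alpha$ as the difference of the class of the inclusion $J_\frakX\hookrightarrow B$ and the class of $\frakX$ regarded as a $J_\frakX$-$B$ correspondence. A direct computation shows that, modulo the above identifications and writing $J_\frakX\sim_{KK}D\oplus D$, the map $\alpha$ splits into a ``diagonal'' inclusion part and an ``off-diagonal'' twist built from the $*$-homomorphisms $\Psi_k|_D\colon D\to B_k$ of \S\ref{ss_semisplit}; the diagonal part is governed by the semisplit extensions $0\to\lK(X_k^\circ)\to B_k\to A_k\to0$, whose quotient maps carry the off-diagonal twist onto the inclusions $i_k\colon D\hookrightarrow A_k$.

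The heart of the argument is to produce $\beta\in KK(\DT(A,E),A)$ with $[\iota]\cdot\beta=1_A$, the Kasparov product being taken in $KK(A,\DT(A,E))\times KK(\DT(A,E),A)\to KK(A,A)$. For this I would take the explicit Kasparov module representing $1_A\in KK(A,A)$ constructed from the Bass--Serre tree in \cite{Hasegawa_IMRN,Fima_Germain1}: its underlying Hilbert modules are built from the GNS modules $Y_k$ attached to the vertices and the module $X$ attached to the edges, which are precisely the modules carrying the canonical representation of $\DT(A,E)$ used in \S\ref{section_cpt}; reading that module through this representation exhibits $1_A$ as $[\iota]$ composed with a well-defined element $\beta$. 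Granting $\beta$, the homomorphism $\iota_*\colon KK(P,A)\to KK(P,\DT(A,E))$ is split injective, so $KK(P,\DT(A,E))\cong KK(P,A)\oplus\ker\beta_*$ naturally in $P$. Combining this splitting with the triangles $\lK(X_k^\circ)\to B_k\to A_k$ and chasing the Toeplitz six-term — an octahedral-type bookkeeping in which the diagonal part of $\alpha$ produces the $A_k$'s, the off-diagonal twist produces the $i_k$'s, and the right invertibility of $\iota$ lets one replace $\DT(A,E)$ by $A$ and thereby cancel the surplus copies of $D$ coming from $J_\frakX$ and from the $B_k$'s — yields the first cyclic sequence, with $\alpha$ restricting to $(i_{1*},i_{2*})$ and $r_*$ restricting to $j_{1*}-j_{2*}$; along the way one reads off $\ker\beta_*\cong KK(P,D)$, i.e.\ the promised by-product $\DT(A,E)\sim_{KK}A\oplus D$. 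The second, contravariant, sequence follows by running the identical argument with $KK(-,P)$ in place of $KK(P,-)$, all the ingredients — the semisplit Toeplitz extension, the semisplit extensions $B_k$, the Morita equivalences, and the right invertibility of $\iota$ — being symmetric in the two variables.

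I expect the genuine obstacle to be the construction of $\beta$: one must unwind the geometric Kasparov module of \cite{Hasegawa_IMRN,Fima_Germain1} and verify that it is compatible with the Cuntz--Pimsner and Toeplitz structure so as to factor through $\DT(A,E)$, and one must pin down precisely how Pimsner's connecting map $\alpha$ decomposes in terms of the $i_k$ and the extensions $B_k$. Once those two points are settled, the extraction of the Fima--Germain sequences from the Pimsner sequences should be essentially formal.
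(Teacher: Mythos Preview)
Your strategy is the paper's: derive both sequences from the Pimsner six-term sequences for $\calO(\frakX)\cong\DT(A,E)$ (via the semisplit Toeplitz extension of Theorem~\ref{prop_semisplit}), after reducing to full $X_k^\circ$ by Lemma~\ref{lem_nonfull} and identifying $J_\frakX$ and $B$ in $KK$-theory. The paper's Lemma~\ref{lem_xieta} is exactly your computation of the Pimsner map in terms of the $i_k$ and the extensions $B_k$. Two points, however, go beyond what you sketch.

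First, the paper constructs not only your $\beta$ but a simultaneous right inverse $(\alpha,\delta)$ for $\phi\oplus\rho_1\colon A\oplus D\to\DT(A,E)$ (Lemma~\ref{lem_subequiv}), and the off-diagonal vanishing $\rho_1\otimes\alpha=0$ enters the chase: to get $\ker\partial'\subset\mathrm{Im}(j_{1*}-j_{2*})$ one needs that $\phi_*(z)\in\mathrm{Im}\,\rho_{1*}$ forces $z=0$, which right-invertibility of $\phi$ alone does not give. More substantially, the hardest step in the covariant sequence is the surjectivity $\mathrm{Im}\,\partial'\supset\ker(i_{1*},i_{2*})$. Here the paper identifies $\phi\otimes_{\calO(\frakX)}\delta_p$ with the boundary class of the concrete semisplit extension
\[
0\too\lK(\calF(\frakX)J_\frakX)\too\rmC^*(\Theta(A))+\lK(\calF(\frakX)J_\frakX)\too A\too 0
\]
built from the UCP section $\Theta$, and then computes directly using the identity $\theta_1\circ\Psi_1\circ i_1=\theta_2\circ\Psi_2\circ i_2$ from Remark~\ref{rem_Theta}. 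This is not formal octahedral bookkeeping; it genuinely uses the explicit $\Theta$ of Theorem~\ref{prop_semisplit}, not merely its existence.

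Second, the contravariant sequence is \emph{not} obtained by re-running the argument. Once the covariant sequence is established, a short chase shows that $(\phi\oplus\rho_1)_*\colon KK(\calO(\frakX),A\oplus D)\to KK(\calO(\frakX),\calO(\frakX))$ is surjective, and Pimsner's trick then upgrades the one-sided invertibility of Lemma~\ref{lem_subequiv} to a genuine $KK$-equivalence $\DT(A,E)\sim_{KK}A\oplus D$; the second sequence follows at once. Your ``symmetric in the two variables'' route would require redoing the hard surjectivity step above contravariantly, and it is not evident that this is easier.
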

Let $\phi =\phi_Y \colon A \hookrightarrow \Delta\bfT(A,E)$ be the inclusion map and set $\rho_k :=\pi \circ\Psi_k |_D \colon D \hookrightarrow D P_k \subset \Delta\bfT(A,E)$.
For each $k\in\{1,2\}$, we denote by $\ol{k}$ the unique element in $\{1,2\} \setminus \{k\}$.
We first show the right invertibility of $\phi \in KK(A,\DT(A,E))$.
\begin{lemma}\label{lem_subequiv}
There exist $\alpha \in KK ( \Delta\bfT(A,E), A)$ and $\delta \in KK ( \Delta\bfT(A,E), D)$ such that
$(\phi\oplus \rho_1) \otimes_{\DT(A,E)} (\alpha \oplus \beta ) = \id_A \oplus \id_D$. 
\end{lemma}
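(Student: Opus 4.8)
The plan is to construct $\beta$ and a preliminary element $\alpha_0$ by hand, and then correct $\alpha_0$. Throughout, $\otimes$ denotes the Kasparov product over $\DT(A,E)$, while $\otimes_D$ is reserved for products over $D$. The element $(\phi\oplus\rho_1)\otimes(\alpha\oplus\beta)$ has four components $\phi\otimes\alpha\in KK(A,A)$, $\phi\otimes\beta\in KK(A,D)$, $\rho_1\otimes\alpha\in KK(D,A)$, $\rho_1\otimes\beta\in KK(D,D)$, which must equal $\id_A,0,0,\id_D$. Suppose we have found $\beta\in KK(\DT(A,E),D)$ with $\rho_1\otimes\beta=\id_D$ and $\phi\otimes\beta=0$, together with some $\alpha_0\in KK(\DT(A,E),A)$ satisfying $\phi\otimes\alpha_0=\id_A$. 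Setting $\gamma:=\rho_1\otimes\alpha_0\in KK(D,A)$ and $\alpha:=\alpha_0-\beta\otimes_D\gamma$, associativity of the Kasparov product gives $\phi\otimes\alpha=\id_A-(\phi\otimes\beta)\otimes_D\gamma=\id_A$ and $\rho_1\otimes\alpha=\gamma-(\rho_1\otimes\beta)\otimes_D\gamma=\gamma-\gamma=0$, so $(\alpha,\beta)$ is as wanted. It thus suffices to produce $\beta$ and $\alpha_0$.

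For $\beta$, consider the two $*$-homomorphisms $\sigma_1,\sigma_2\colon\DT(A,E)\to\lL(X)$ of \S\S\ref{ss_general} (identifying each $X^{(k)}$ with $X$). By Eq.~(\ref{eq_sigma}) they agree on $\phi_Y(A)$ and satisfy $\sigma_1(P_k)-\sigma_2(P_k)=\pm e_D\in\lK(X)$; expanding a word in $\phi_Y(A)\cup\{P_1,P_2\}$ after substituting $\sigma_1(P_k)=\sigma_2(P_k)\pm e_D$, the ``pure $\sigma_2$'' term equals $\sigma_2(x)$ while every other term carries a factor $e_D$ and hence lies in $\lK(X)$; since $\lK(X)$ is closed and $\sigma_1-\sigma_2$ is bounded, $\sigma_1(x)-\sigma_2(x)\in\lK(X)$ for every $x\in\DT(A,E)$. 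Grading $X\oplus X$ with the first copy even, the triple $(X\oplus X,\sigma_1\oplus\sigma_2,F_0)$, where $F_0$ is the flip operator, is then a Kasparov $(\DT(A,E),D)$-module; let $\beta$ be its class. Restricting along $\phi$ the two copies carry the same action $\phi_X$, which commutes with $F_0$, so the module is degenerate and $\phi\otimes\beta=0$. Restricting along $\rho_1$ (that is, $d\mapsto dP_1$), Eq.~(\ref{eq_sigma}) makes the two actions $\phi_X(d)(e_D+P_{(\ell,1)}^\perp)$ and $\phi_X(d)P_{(\ell,1)}^\perp$; since $\phi_X(D)$ commutes with $e_D$ and $P_{(\ell,1)}$, the module splits along the invariant decomposition $X=e_D X\oplus P_{(\ell,1)}^\perp X\oplus (P_{(\ell,1)}-e_D)X$ into a degenerate summand on the last two pieces, plus on $e_D X\cong D$ the module $(D\oplus D,\mu\oplus 0,F_0)$ with $\mu\colon D\to\lK(D)$ the left regular representation, whose class is $1_D$ by a standard computation. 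Hence $\rho_1\otimes\beta=\id_D$.

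For $\alpha_0$, I would invoke the geometric (Julg--Valette type) description of $\id_A\in KK(A,A)$ from \cite{Hasegawa_IMRN,Fima_Germain1}, which realizes $\id_A$ as the class of a $\lZ/2$-graded Kasparov $(A,A)$-module on the Hilbert $A$-module $\calE:=Z\oplus(X\otimes_D A)$ --- where $Z=\bigoplus_{k\in\calI}Y_k\otimes_{A_k}A$ is the ``vertex'' module of Eq.~(\ref{eq_YZ}) and $X\otimes_D A$ is the ``edge'' module --- with left $A$-action $\widetilde\phi_Z|_A\oplus(\phi_X\otimes 1)$ and odd operator $F$ coming from the ``move towards the base vertex'' map of the Bass--Serre tree (so $F=F^*$, $F^2-1$ is compact, and $[F,\phi_Y(a)]$ is compact for $a\in A$). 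Both summands of the left action extend to $\DT(A,E)$ verbatim, via $\widetilde\phi_Z$ on $Z$ and $\sigma_1\otimes 1$ on $X\otimes_D A$ (using $\sigma_1$ in place of $\sigma_2$ changes the latter only by a compact perturbation, by the previous paragraph). Granting that $[F,x]\in\lK(\calE)$ for every $x\in\DT(A,E)$, the triple $(\calE,\widetilde\phi_Z\oplus(\sigma_1\otimes1),F)$ is a Kasparov $(\DT(A,E),A)$-module; let $\alpha_0$ be its class. Its restriction along $\phi$ is the original module, so $\phi\otimes\alpha_0=\id_A$, and the proof is complete by the first paragraph.

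The one non-formal point is the verification that $[F,x]\in\lK(\calE)$ for all $x\in\DT(A,E)$, needed in the last paragraph. As this is already known for $x\in\phi_Y(A)$, it suffices to treat $x=P_1$ and $x=P_2$: one must check that the commutator of $F$ with the ``vertex-side'' projection $\widetilde\phi_Z(P_k)$, as well as with the ``edge-side'' projection $\sigma_1(P_k)\otimes 1$, is of finite rank. Geometrically this says that, outside finitely many exceptions localized at the base vertex and the base edge, the final edge of the geodesic from a vertex to the base vertex lies in the $k$-side subtree precisely when the vertex does; making this precise through the definition (\ref{eq_P_k}) of $P_k$ in terms of the submodules $X(\ell,k)^\perp$ and $X(r,k)$ is the delicate point. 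Everything else --- the two identities for $\beta$, the class of the $e_D$-corner module, and the correction step --- is routine.
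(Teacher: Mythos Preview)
Your approach is essentially the paper's. Your $\beta$ is exactly the paper's element (called $\delta$ in the proof): the triple $(X^{(1)}\oplus X^{(2)},\sigma_1\oplus\sigma_2,\left[\begin{smallmatrix}0&1\\1&0\end{smallmatrix}\right])$, and your computations of $\phi\otimes\beta=0$ and $\rho_1\otimes\beta=\id_D$ match. Your $\alpha_0$ is likewise the same module the paper uses for $\alpha$: namely $(Z\oplus(X\otimes_D A),\,\widetilde{\phi}_Z\oplus(\sigma_1\otimes 1),\,F)$ with $F=\left[\begin{smallmatrix}0&S\\S^*&0\end{smallmatrix}\right]$ for the explicit isometry $S\colon X\otimes_D A\to Z$ of \cite{Hasegawa_IMRN}.

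The difference lies precisely at the point you flag as delicate. For this specific $S$ the paper observes the \emph{exact} identity $\widetilde{\phi}_Z(P_1)S=S(\sigma_1(P_1)\otimes 1)$ (and since $\calI=\{1,2\}$ in this section, $P_2=1-P_1$ is automatic). This single observation simultaneously (i) closes your acknowledged gap --- the commutator with $P_k$ is not merely compact but zero --- and (ii) renders your correction step unnecessary: the stronger form $\widetilde{\phi}_Z(P_1)=S(\sigma_1(P_1)\otimes 1)S^*$ shows that the module restricted along $\rho_1$ is already degenerate, so $\rho_1\otimes\alpha_0=0$ without any adjustment. Your correction argument via $\alpha=\alpha_0-\beta\otimes_D\gamma$ is sound, but once one pins down $F=\left[\begin{smallmatrix}0&S\\S^*&0\end{smallmatrix}\right]$ and checks that one identity, it becomes redundant.
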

\begin{proof}
Let $(Z, \phi_Z)$ and $\widetilde{\phi}_Z \colon \Delta\bfT(A,E) \to \lL (Z)$ be as in Eq.\,(\ref{eq_YZ}).
Define the isometry $S \colon X \otimes_D A \to Z$ by
\[
S = 
\begin{cases}
  S_1 \otimes 1 :  X (r, 1) \otimes_D A \to Y_1 \otimes_{A_1} A ; \\
  S_2 \otimes 1 :  X (r, 2 )^\circ \otimes_D A \to Y_2^\circ \otimes_{A_2} A.
\end{cases}
\]
It follows from \cite[Lemma 3.2]{Hasegawa_IMRN} that $S (\phi_X ( a) \otimes 1 ) - \phi_Z (a ) S $ is compact for $a\in \Delta\bfT(A,E)$.
Since $\widetilde{\phi}_Z ( P_1 ) S = S ( \sigma_1 (P_1 ) \otimes 1 ) $ holds,
the triplet
\[
  (  Z \oplus ( X \otimes_D A ),  \widetilde{\phi}_Z \oplus ( \sigma_1 \otimes 1 ), \left[ \begin{smallmatrix} 0 & S \\ S^* & 0 \end{smallmatrix}\right]  )
\]
is a $\Delta\bfT(A,E)$-$A$ Kasparov bimodule
and defines an element $\alpha \in KK ( \Delta\bfT(A,E), A)$.
Since $\phi  \otimes_{\DT (A,E)} \alpha $ is implemented by the $A$-$A$ Kasparov bimodule
\[
  (Z \oplus (X \otimes_D A ),  \phi_Z \oplus (\phi_X \otimes 1) , \left[ \begin{smallmatrix} 0 & S \\ S^* & 0 \end{smallmatrix}\right]  ) ,
\]
we have $\phi \otimes_ {\Delta\bfT(A,E)} \alpha = \id_A  \in KK (A, A)$ by \cite[Theorem 3.4]{Hasegawa_IMRN} (see \cite{Fima_Germain1} for the degenerate case).
Also, it follows from $\widetilde{\phi}_Z ( P_1 ) = S ( \sigma_1 (P_1 ) \otimes 1 )S^*$ that $\rho_1  \otimes_{\DT (A,E)} \alpha  = 0$.
Let $\sigma_k \colon \Delta\bfT(A,E) \to \lL(X^{(k)})$ be as in \S\S \ref{ss_general}.
Since $\sigma_1 = \sigma_2 =\phi_X$ on $A$ and $\sigma_1 ( P_1 ) - \sigma_{2} ( P_1 ) = e_D$ hold,
the triplet
$(X^{(1)} \oplus X^{(2)}, \sigma_1 \oplus \sigma_2, \left[ \begin{smallmatrix} 0 & 1 \\ 1 & 0 \end{smallmatrix}\right] )$
is a $\Delta\bfT(A,E)$-$D$ Kasparov bimodule and the corresponding element $\delta \in KK ( \Delta\bfT(A,E), D)$ satisfies that $\rho_1 \otimes_{\DT (A,E)} \delta  = \id_D$ and $\phi \otimes_{\DT (A,E)} \delta = 0$.
\end{proof}

To compute the $K$-theory of $\DT(A,E) \cong \calO(\frakX)$,
we need to compute the $K$-theory of $J_\frakX$.
Thanks to the next technical lemma,
we can assume that $X_1^\circ$ and $X_2^\circ$ are \emph{full},
i.e., $\ospan \{E(a^*b) \mid a,b\in A_k^\circ \} =D$ holds for each $k=1,2$.
Note that $X_k^\circ$ is full whenever $D$ is simple (e.g. $D=\lC$).

\begin{lemma}\label{lem_nonfull}
Let $\varphi$ be a nondegenerate state on $D$ and set $\varphi_k : = \varphi \circ E_k$ for $k =1,2$,
$(\calT , \omega )$ be the Toeplitz algebra with the vacuum state,
and $(\calA_k, \widetilde{\varphi}_k ) = ( A_k , \varphi_k ) \ast ( \calT , \omega )$ be the reduced free product.
Denote by $F_k \colon \calA_k \to D$
the composition of the canonical conditional expectation $\calA_k \to A_k$ and $E_k \colon A_k \to D$
and by $\calX_k$ the GNS Hilbert $\rmC^*$-module of $F_k$.
Set $(\calA, F ) = (\calA_1, F_1 ) \ast_D (\calA_2 , F_2 )$.
Then $\calX_k^\circ$ is full and the embedding maps $A_k \hookrightarrow \calA_k$ and $A \hookrightarrow \calA$ induce $KK$-equivalences.
\end{lemma}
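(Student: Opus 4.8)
The plan is to verify the three assertions separately. Write $\pi_k\colon\calA_k\to A_k$ for the canonical conditional expectation attached to the free product $(\calA_k,\widetilde{\varphi}_k)=(A_k,\varphi_k)\ast(\calT,\omega)$, so that $F_k=E_k\circ\pi_k$ and, in particular, $F_k|_{A_k}=E_k$. For the fullness of $\calX_k^\circ$, let $s\in\calT\subset\calA_k$ be the generating isometry. Then $\pi_k(s)=\omega(s)1=0$, hence $F_k(s)=0$, so $s\xi_k\in\calX_k^\circ=\calX_k\ominus\xi_kD$; and $\langle s\xi_k,s\xi_k\rangle=F_k(s^*s)=F_k(1)=1_D$. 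Thus the closed two-sided ideal $\ospan\{\langle\xi,\eta\rangle\mid\xi,\eta\in\calX_k^\circ\}$ of $D$ contains the unit and therefore equals $D$, i.e.\ $\calX_k^\circ$ is full.

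The heart of the matter is the following auxiliary statement: \emph{for every unital separable $\rmC^*$-algebra $C$ with a nondegenerate state $\psi$, the unital embedding $C\hookrightarrow C\ast_\lC\calT$ into the reduced free product against $(\calT,\omega)$ is a $KK$-equivalence.} (When $C=\lC$ this is just the classical fact that $\lC\hookrightarrow\calT$ is a $KK$-equivalence, $\calT$ being the Toeplitz--Pimsner algebra of the trivial correspondence over $\lC$; so assume $C\neq\lC$.) To prove it I would run the construction of \S\ref{section_cpt}--\S\ref{section_pimsner} on $(C\ast_\lC\calT,\psi\ast\omega)$: by Theorem~\ref{prop_cuntz_pimsner} the algebra $\DT(C\ast_\lC\calT,\psi\ast\omega)$ is a Cuntz--Pimsner algebra $\calO(\frakX)$ whose Toeplitz extension is semisplit (Theorem~\ref{prop_semisplit}), and since the amalgam is now $\lC$ the two Hilbert $\lC$-modules entering the construction are nonzero Hilbert spaces, hence full, so by Lemma~\ref{lem_J_X} the ideal $J_{\frakX}$ is Morita equivalent to $\lC\oplus\lC$. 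Feeding this into Pimsner's six-term exact sequence \cite{Pimsner_free} associated to the (semisplit) Toeplitz extension, and combining with the right invertibility of $C\ast_\lC\calT\hookrightarrow\DT(C\ast_\lC\calT,\psi\ast\omega)$ furnished by Lemma~\ref{lem_subequiv} and with the $KK$-equivalence $\lC\hookrightarrow\calT$, a diagram chase shows that $KK(P,C)\to KK(P,C\ast_\lC\calT)$ is an isomorphism for every separable $P$, and likewise on $KK^1$; taking $P=C$ and $P=C\ast_\lC\calT$ then yields a two-sided $KK$-inverse. Applied with $C=A_k$ and $\psi=\varphi_k$, this gives that $A_k\hookrightarrow\calA_k$ is a $KK$-equivalence, compatibly with the conditional expectations since $F_k|_{A_k}=E_k$.

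It remains to treat $A\hookrightarrow\calA$, for which I would reduce to the auxiliary statement by absorbing $D$: using the associativity of reduced amalgamated free products, $\calA_k=A_k\ast_\lC\calT\cong A_k\ast_D(D\ast_\lC\calT)$, and hence (viewing all the amalgamations over $D$ as a single coproduct)
\[
\calA=\calA_1\ast_D\calA_2\;\cong\;\bigl(A_1\ast_D A_2\bigr)\ast_\lC\calT\ast_\lC\calT\;=\;A\ast_\lC\calT\ast_\lC\calT,
\]
an isomorphism that is the identity on $A$ and sends $F$ to $E$ composed with the canonical conditional expectation onto $A$. Two applications of the auxiliary statement --- first to $A\hookrightarrow A\ast_\lC\calT$, then to $(A\ast_\lC\calT)\hookrightarrow(A\ast_\lC\calT)\ast_\lC\calT$ --- then show that $A\hookrightarrow\calA$ is a $KK$-equivalence. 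I expect the main obstacle to be precisely this last reduction: justifying the associativity identity $A_k\ast_\lC\calT\cong A_k\ast_D(D\ast_\lC\calT)$ and the ensuing reorganization of the mixed-amalgam reduced free products, with the requisite bookkeeping of conditional expectations and GNS constructions. These are standard manipulations for reduced amalgamated free products, but they carry the real content of the step; the fullness step and the $KK$-equivalences for the free components are comparatively routine once the auxiliary statement is in hand.
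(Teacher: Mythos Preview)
Your fullness argument is identical to the paper's, and your rearrangement $\calA\cong A\ast_\lC\calT\ast_\lC\calT$ is exactly what the paper obtains by citing Speicher. The real divergence is in your ``auxiliary statement'' that $C\hookrightarrow C\ast_\lC\calT$ is a $KK$-equivalence. The paper proves this in one line: by Shlyakhtenko's theorem \cite[Theorem~2.3]{Shlyakhtenko}, the reduced free product $(C,\psi)\ast(\calT,\omega)$ is canonically isomorphic to the Toeplitz--Pimsner algebra $\calT(\calH_\psi)$ of the $C$-$C$ correspondence associated with the state $\psi$, and then Pimsner's result \cite{Pimsner_free} gives the $KK$-equivalence $C\hookrightarrow\calT(\calH_\psi)$ immediately. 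The same device handles $A\hookrightarrow\calA$ after the Speicher rearrangement, since $A\ast_\lC\calT\ast_\lC\calT\cong\calT(\calH\oplus\calH)$.

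Your route instead bootstraps the auxiliary statement from the paper's own Bass--Serre machinery applied to $C\ast_\lC\calT$. This is not logically circular---the amalgam is $\lC$, so the fullness hypotheses needed for Lemma~\ref{lem_xieta} are automatic and Lemma~\ref{lem_nonfull} is not invoked---but your ``diagram chase'' is not a chase at all: it is essentially the entire proof of Theorem~\ref{thm_FG} (Lemma~\ref{lem_xieta}, the computation of $\ker\xi$, the comparison of $\Imag\partial\circ\phi_*$ with $\ker\xi$, and the Pimsner trick upgrading right invertibility to $KK$-equivalence) specialised to $D=\lC$. So your proof of this preparatory lemma would be longer than the proof of the main theorem it is meant to support, and would have to be placed after Lemma~\ref{lem_xieta} rather than before it. What you gain is self-containment (no Shlyakhtenko reference); what the paper gains is a three-line proof. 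Your intermediate step $A_k\ast_\lC\calT\cong A_k\ast_D(D\ast_\lC\calT)$ is a correct instance of associativity for reduced amalgamated free products, but as you note it requires bookkeeping of conditional expectations that the paper sidesteps by working throughout with the scalar state $\varphi\circ F$ and citing Speicher directly.
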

\begin{proof}
Let $s$ be the unilateral shift generating $\calT$.
Then, one has $F_k (s)= 0$ and $F_k(s^*s) =1$, and thus $\calX_k^\circ$ is full.
Let $\calH_k$ and $\calH$ be the $\rmC^*$-correspondences over $A_k$ and $A$
associated with the UCP maps $\varphi_k(\cdot) 1$ and $\varphi(\cdot)1$ on $A_k$ and $A$ (see e.g. \cite[Example 4.6.11]{Brown_Ozawa}), respectively.
It follows from \cite[Theorem 2.3]{Shlyakhtenko}
that $(\calT (\calH_k ), \varphi_k \circ E_{\calH_k } ) \cong (A_k, \varphi_k ) \ast (\calT, \omega )$.
Thus, the embedding $A_k \hookrightarrow \calA_k$ induces a $KK$-equivalence by \cite{Pimsner_free}.
Similarly, by a result due to Speicher \cite{Speicher}, we have
\[
(\calA,\varphi \circ F) \cong (A, \varphi \circ E ) \ast
(\calT, \omega ) \ast (\calT, \omega ) \cong (\calT(\calH), E_\calH) \ast_A (\calT(\calH),E_\calH)
\cong (\calT(\calH\oplus \calH), E_{\calH\oplus\calH}),
 \]
and thus $A \hookrightarrow \calA$ gives a $KK$-equivalence by \cite{Pimsner_free} again.
\end{proof}

We may identify $\calB_k$ with $\rmC^*(A_k, P_k)$.
If $X_1^\circ$ and $X_2^\circ$ are full,
then $\lK(X_k^\circ) \subset \lK(X_k)$ and $B_k \subset \calB_k$ are full corners,
and thus all the horizontal embedding maps in the next commuting diagram
induce $KK$-equivalences by \cite{Brown}.
\begin{equation}\label{diag_cpt}
\begin{gathered}
\xymatrix @C=1.2cm@R=1.2cm
{
\lK(X_k^\circ)  \ar@{^{(}->}[r]^{\kappa_k} \ar@{^{(}->}[d]
    & \lK(X_k)\ar@{^{(}->}[d]
      & D \ar@{_{(}->} [l]_{\epsilon_k} \ar[ld]^{\rho_{\ol{k}}} \\
B_k \ar@{^{(}->}[r] & \calB_k &
}
\end{gathered}
\end{equation}
Here the embedding $\epsilon_k \colon D \hookrightarrow \lK(X_k)$ is given by $d\mapsto d e_D$.
We set $(X_k^\circ):= \kappa_k\otimes_{\lK(X_k)} (\epsilon_k)^{-1} \in KK (\lK(X_k^\circ), D)$.
Since the inclusion map $\mu_k \colon A_k \hookrightarrow \calB_k$ gives
a cross section of of the split extension $0 \too \lK(X_k) \too \calB_k \too A_k \too 0$ (see \S\S\ref{ss_semisplit}),
the above commuting diagram implies that
$\mu_k \oplus \rho_{\ol{k}} \in KK(A_k\oplus D , \calB_k)$ is a $KK$-equivalence.

By the results in \S\ref{section_pimsner}
we may use the identifications $\DT(A,E) =\calO(\frakX)$ and $\rmC^*(\Pi,T)=\calT(\frakX)$.
Recall that the kernel of the quotient map
$p \colon \calT(\frakX) \to \calO(\frakX)$ is isomorphic to $\lK(\calF(\frakX)J_\frakX)$,
and that $\Pi \colon B \to \calT(\frakX)$ and
$\iota_\Omega :=(\Pi - \psi_T\circ\phi_\frakX)|_{J_\frakX} \colon J_\frakX \to \lK(\calF(\frakX)J_\frakX)$
induce $KK$-equivalences (see \cite{Pimsner_free,Katsura}).
By these $KK$-equivalences,
one can rewrite the six-term exact sequence induced from the Toeplitz extension
as
\begin{equation}
\begin{CD}\label{seq_CP}
KK ( P,   J_\frakX  )  @>>{\iota_* - [ \frakX]}> KK( P, B  )  @>>{\pi_*}>  KK (P, \calO(\frakX)) \\
@AAA				 	@.					@VVV\\
KK^1 (P,  \calO(\frakX) )  @ <{\pi_*}<< KK^1 (P, B )  	@<{\iota_*-[\frakX]}<<  KK^1 ( P, J_\frakX)
\end{CD}
\end{equation}
for any separable $\rmC^*$-algebra $P$ (see \cite{Pimsner_free,Katsura}).
Here $\iota$ is the inclusion map $J_\frakX \hookrightarrow B$ and
$[ \frakX ]$ is induced from $[( \frakX, \phi_\frakX |_{J_\frakX } , 0 )] \in KK (J_\frakX, B)$.
Let $\delta_p \in KK^1 (\calO(\frakX), \lK (\calF (\frakX)J_\frakX ))$ be the element corresponding
to the Toeplitz extension (see \cite[\S 1]{Skandalis}).
Then, the connecting map $KK (P, \calO(\frakX)) \to KK^{1} (P, J_\frakX)$ in the above exact sequence is given by
$ \delta_p \otimes_{\lK(\calF(\frakX)J_\frakX)} (\iota_\Omega)^{-1} \in KK^1 (\calO (\frakX), J_\frakX)$.
Note that $(X_1^\circ) \oplus (X_2^\circ) \in KK(J_\frakX, D\oplus D)$ is a $KK$-equivalence.

\begin{lemma}\label{lem_xieta}
Assume that $X_1^\circ$ and $X_2^\circ$ are full.
Then there is a cyclic exact sequence
\[
\begin{CD}
KK(P,D\oplus D) @>>\xi> KK (P, A_1  \oplus A_2 \oplus D  \oplus D) @>>\eta>  KK(P, \calO(\frakX)) \\
@AA\partial A			@.						@V\partial VV\\
KK^1 (P, \calO(\frakX))  @ <\eta<< KK^1 (P,A_1  \oplus A_2 \oplus D  \oplus D)	@<\xi<<  KK^1 (P, D  \oplus D),
\end{CD}
\]
where $ \xi (x, y ) = ( - i_{1 *} (y ),-i_{2*}(x), x  +y , x + y )$ and
$\eta = \phi_* \circ j_{1*} + \phi_* \circ j_{2*} + \rho_{1*} + \rho_{2*}$.
The map $\partial$ is induced from $ \delta_p \otimes_{\lK(\calF(\frakX)J_\frakX)}
(\iota_\Omega)^{-1} \otimes_{J_\frakX} ((X_1^\circ)\oplus (X_2^\circ) ) \in KK^1 (\calO(\frakX), D\oplus D)$.
\end{lemma}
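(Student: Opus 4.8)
The plan is to rewrite the six–term exact sequence \eqref{seq_CP} by inserting the $KK$-equivalences already established: on the left, $(X_1^\circ)\oplus(X_2^\circ)\colon J_\frakX\xrightarrow{\sim}D\oplus D$ (Lemma \ref{lem_J_X} gives $J_\frakX=\lK(X_1^\circ)\oplus\lK(X_2^\circ)$), and, on the middle term, $B=B_1\oplus B_2\xrightarrow{\sim}A_1\oplus A_2\oplus D\oplus D$ obtained by composing the full-corner inclusions $B_k\subset\calB_k$ with the equivalences $\mu_k\oplus\rho_{\ol{k}}\colon A_k\oplus D\xrightarrow{\sim}\calB_k$ of the diagram \eqref{diag_cpt} (this is where fullness of $X_1^\circ,X_2^\circ$ is used). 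Exactness of the resulting cyclic sequence is then inherited, $\partial$ is by construction the connecting map $\delta_p\otimes(\iota_\Omega)^{-1}$ of \eqref{seq_CP} post-composed with $(X_1^\circ)\oplus(X_2^\circ)$, and everything reduces to identifying the two transported maps $\eta$ and $\xi$.

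First I would compute $\eta$, the transport of $\pi_*$. Since $\pi|_{B_k}=\rho_k|_{B_k}$ and $\rho_k|_{A_k}=\id$, the $A_k$-coordinate of $A_k\oplus D\to\calB_k\supset B_k$ is carried by $\mu_k$ with $\rho_k\circ\mu_k=\phi\circ j_k$, while the $D$-coordinate is carried by $\epsilon_k$ and the commuting triangle of \eqref{diag_cpt} gives $\rho_k\circ\epsilon_k=\rho_{\ol{k}}$ (the inclusion $d\mapsto dP_{\ol{k}}$). Once the two copies of $D$ are ordered so that the summand coming from $B_{\ol{k}}$ is the one on which $\pi_*$ becomes $\rho_{k*}$, this reads $\eta=\phi_*\circ j_{1*}+\phi_*\circ j_{2*}+\rho_{1*}+\rho_{2*}$.

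The substantive step is $\xi=\iota_*-[\frakX]_*$. The inclusion $\iota_k\colon\lK(X_k^\circ)\hookrightarrow B_k$ factors through $\calB_k$ as $\kappa_k$ followed by the ideal inclusion $\lK(X_k)\hookrightarrow\calB_k$, hence is transported to $\id_D$ landing in the $D$-summand attached to $B_k$; so $\iota_*(x,y)=(0,0,y,x)$. For $[\frakX]$, note that $\phi_\frakX$ restricted to $\lK(X_k^\circ)$ is supported on the summand $\frakX_k=X_k^\circ\otimes_D B_k^\perp=X_k^\circ\otimes_D B_{\ol{k}}$ and acts there by $\theta_{\xi,\eta}\mapsto\theta_{\xi,\eta}\otimes1$, so $[\frakX]$ is the sum of the classes $[\frakX_k]\in KK(\lK(X_k^\circ),B_{\ol{k}})$. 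Since $X_k^\circ$ is full over $D$, the conjugate bimodule $\overline{X_k^\circ}$ implements $(X_k^\circ)^{-1}$ and $\overline{X_k^\circ}\otimes_{\lK(X_k^\circ)}\frakX_k\cong B_{\ol{k}}$ with left $D$-action $\Psi_{\ol{k}}|_D$, so $(X_k^\circ)^{-1}\otimes[\frakX_k]=[\Psi_{\ol{k}}|_D]\in KK(D,B_{\ol{k}})$. It remains to push $[\Psi_{\ol{k}}|_D]$ through $B_{\ol{k}}\subset\calB_{\ol{k}}\sim A_{\ol{k}}\oplus D$: inside $\calB_{\ol{k}}$ the map $\Psi_{\ol{k}}|_D$ equals $d\mapsto d(1-e_{\ol{k}})$, and since $e_{\ol{k}}$ commutes with $D$ (immediate from the defining relation of $\calB_{\ol{k}}$) this is the orthogonal sum of the $*$-homomorphisms $d\mapsto d=\mu_{\ol{k}}(i_{\ol{k}}(d))$ and $d\mapsto-de_{\ol{k}}=-\epsilon_{\ol{k}}(d)$. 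Applying the inverse of $\mu_{\ol{k}}\oplus\epsilon_{\ol{k}}$, together with $\epsilon_{\ol{k}}^{-1}$ on the ideal, the first summand contributes $i_{\ol{k}*}$ on the $A_{\ol{k}}$-coordinate and the second contributes $-\id_D$ on the $D$-coordinate. Assembling the pieces, $[\frakX]_*(x,y)=(i_{1*}(y),\,i_{2*}(x),\,-x,\,-y)$, and therefore $\xi(x,y)=(-i_{1*}(y),\,-i_{2*}(x),\,x+y,\,x+y)$.

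The hard part is this last identification of the $D$-coordinate of $[\frakX_k]$: one must use precisely the equivalence $B_{\ol{k}}\sim A_{\ol{k}}\oplus D$ coming from \eqref{diag_cpt} — equivalently, from the \emph{split} extension $\calB_{\ol{k}}$, not from the semisplit extension of $B_{\ol{k}}$ alone — and it is the orthogonal decomposition $d=d(1-e_{\ol{k}})\oplus de_{\ol{k}}$ with $de_{\ol{k}}=\epsilon_{\ol{k}}(d)$ that produces the crucial term $-\id_D$. This term is exactly what yields the diagonal entries $x+y,\,x+y$ of $\xi$; replacing it by $0$, as the semisplit structure of $B_{\ol{k}}$ by itself would suggest, would give the wrong map.
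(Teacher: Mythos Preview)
Your approach is essentially the paper's: both transport the sequence \eqref{seq_CP} through the $KK$-equivalences $(X_1^\circ)\oplus(X_2^\circ)$ and $B_k\hookrightarrow\calB_k\sim A_k\oplus D$, and both identify the transported $[\frakX_k]$ with $[\Psi_{\ol k}|_D]\circ(X_k^\circ)$ and then decompose $[\Psi_{\ol k}|_D]$ using the split extension $\calB_{\ol k}$. The paper records the factorization as $[\frakX_k]=(\Psi_{\ol k}\circ i_{\ol k})_*\circ(X_k^\circ)_*$ and encodes the decomposition in the commuting square \eqref{seq_X_k}; you instead compute $(X_k^\circ)^{-1}\otimes[\frakX_k]$ via the conjugate bimodule, which is the same fact read from the other side.

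One sentence, however, is misstated. You write that $d\mapsto d(1-e_{\ol k})$ ``is the orthogonal sum of the $*$-homomorphisms $d\mapsto d=\mu_{\ol k}(i_{\ol k}(d))$ and $d\mapsto-de_{\ol k}=-\epsilon_{\ol k}(d)$''. The second map is not a $*$-homomorphism, and the ranges of $\mu_{\ol k}\circ i_{\ol k}$ and $\epsilon_{\ol k}$ are \emph{not} orthogonal in $\calB_{\ol k}$. What is true --- and what you actually need --- is the reverse decomposition: $\Psi_{\ol k}|_D$ and $\epsilon_{\ol k}$ have orthogonal ranges because $(1-e_{\ol k})e_{\ol k}=0$, and their (orthogonal) sum is $d\mapsto d(1-e_{\ol k})+de_{\ol k}=d=\mu_{\ol k}(i_{\ol k}(d))$. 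Hence in $KK(D,\calB_{\ol k})$ one has $[\Psi_{\ol k}|_D]+[\epsilon_{\ol k}]=[\mu_{\ol k}\circ i_{\ol k}]$, i.e.\ $[\Psi_{\ol k}|_D]=[\mu_{\ol k}\circ i_{\ol k}]-[\epsilon_{\ol k}]$, and pushing through $(\mu_{\ol k}\oplus\rho_k)^{-1}$ yields the pair $(i_{\ol k*},-\id_D)$ you want. With this correction the argument goes through and matches the paper's.
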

\begin{proof}
The proof proceeds by rewriting the exact sequence in Eq.~(\ref{seq_CP}).
Since $X_k^\circ$ is full, $B_k$ is a full corner of $\calB_k$,
and thus the inclusion $B_k\hookrightarrow \calB_k$ induces a $KK$-equivalence.
Then, the next diagram commutes and all vertical arrows are isomorphisms:
\[
\xymatrix@R=1.2cm @C=2.2cm{
KK^p(P, \lK (X_k^\circ ) ) \ar[d]_{(\kappa_k)_*}\ar[r]_{\iota_*} & KK^p(P, B_k )\ar[d] \ar[r]_{\pi_*} & KK^p(P,\calO(\frakX)) \\
KK^p(P, \lK(X_k)) \ar[r] & KK^p(P,\calB_k) \ar[r] & KK^p(P,\calO(\frakX)) \ar@{=}[u]\ar@{=}[d]\\
KK^p(P, D ) \ar[u]^{(\epsilon_k)_*} \ar[r]^{(0,1)} &KK^p(P, A_k \oplus D )\ar[u]_{(\mu_k)_* +(\rho_{\ol{k}})_*} \ar[r]^{\phi_*\circ j_{k*}+\rho_{\ol{k}*} }& KK^p(P,\calO(\frakX)).
}
\]
We next observe that $[ \frakX ]$ is the direct sum of two maps
$(\Psi_{\ol{k}}\circ i_{\ol{k}})_* \circ (X_k^\circ)_* \colon
KK^p(P, \lK ( X_k^\circ )) \to KK^p(P, B_{\ol{k}})$, $k=1,2$.
Thus, the assertion follows from next commuting diagram:
\begin{equation}\label{seq_X_k}
\begin{gathered}
\xymatrix @R=1.4cm @C=3cm
{
KK^p(P, \lK ( X_k^\circ ) ) \ar[d]^{(X_k^\circ)_*}\ar[r]^{(\Psi_{\ol{k}}\circ i_{\ol{k}})_* \circ (X_k^\circ)_* } & KK^p(P, B_{\ol{k}})\ar[d]\\
KK^p(P,D )\ar[ur]_{(\Psi_{\ol{k}}\circ i_{\ol{k}})_*}\ar[r]_{(i_{\bar{k}*} ,  \ - (\cdot) )} & KK^p(P,A_{\ol{k}}\oplus  D ).
}
\end{gathered}
\end{equation}
\end{proof}
We are now ready to prove Theorem \ref{thm_FG}.
\begin{proof}[Proof of Theorem \ref{thm_FG}]
We first show the exactness at $KK^p(P,A_1\oplus A_2)$.
By the previous lemma,
we have $\Imag \partial \circ \phi_*  \subset \ker \xi = \{(x, -x) \mid x \in \ker (i_{1*},i_{2*})\}$.
Via the isomorphism $\ker \xi \ni (x,-x) \mapsto x \in \ker (i_{1*}, i_{2*}) \subset KK^p(P,D)$,
we obtain a connecting map $\partial' \colon KK^p(P, A) \to KK^{p+1} (P, D)$.
By Lemma \ref{lem_subequiv}, $\phi_*\colon KK^p(P,A)\to KK^p(P,\calO(\frakX))$ is injective.
Thus, $j_{1*}(x)-j_{2*}(y)=0$ if and only if
$\eta(x,-y,0,0)=0$ if and only if
$(x, -y, 0, 0 ) =( i_{1*}(z), -i_{2*}(z))$ for some $z \in KK^p(P,D)$,
and hence we obtain $\Imag (i_{1*}, i_{2*}) = \ker (j_{1*} - j_{2*})$.
Also, since $\phi_*$ is injective,
we have $\ker \partial' = \ker \partial \circ \phi_* = \Imag (j_{1*}+ j_{2*})$,
and thus the exactness at $KK^p(P,A)$ holds.

To see $\Imag \partial' = \ker (i_{1*}, i_{2*})$, it is enough to see $\Imag \partial \circ \phi_*  \supset \ker \xi = \{ (x, -x ) \mid x \in \ker(i_{1*}, i_{2*}) \} $.
By the definition of $\partial$,
this is equivalent to $\Imag (\phi \otimes_{\calO(\frakX)} \delta_p)_*$ contains
$(\iota_\Omega)_* \circ ((X_1^\circ)\oplus (X_2^\circ) )^{-1}_*(x, -x )$ for $x\in \ker (i_{1*},i_{2*})$.
Let $\Theta \colon \calO(\frakX) \to \calT(\frakX)$ be as in Theorem \ref{prop_semisplit}
and put $\frakA := \rmC^*(\Theta(A)) + \lK(\calF(\frakX)J_\frakX)$.
The next commuting diagram
\[
\xymatrix @R=1.2cm
{
0 \ar[r] & \lK(\calF(\frakX)J_\frakX) \ar[r] & \calT(\frakX) \ar[r]_p & \calO(\frakX) \ar[r] & 0\\
0 \ar[r] & \lK(\calF(\frakX)J_\frakX) \ar@{=} [u]\ar[r]^{\nu} & \frakA\ar @{^{(}->}[u] \ar[r]^q & A \ar[u]_{\phi} \ar[r] & 0
}
\]
and \cite[Lemma 1.5]{Skandalis} imply that $\phi \otimes_{\calO(\frakX)} \delta_p \in KK^1(A,\lK(\calF(\frakX)J_\frakX))$ is
the element corresponding to the semisplit extension $\frakA$ of $A$.
Hence it follows from the six-term exact sequence for $\frakA$ that
$\Imag (\phi \otimes_{\calO(\frakX)} \delta_p)_* = \ker \nu_*$.
Therefore, it suffices to show that
$\nu_* \circ (\iota_\Omega)_* \circ ((X_1^\circ)\oplus (X_2^\circ) )^{-1}_*(x, -x )=0$ for $x \in \ker(i_{1*}, i_{2*})$. 
Let $\theta_k \colon B_k \to \rmC^* (\Theta_k(A_k)) $ be the inverse of the isomorphism  in Remark \ref{rem_Theta}.
Note that $\iota_\Omega \otimes_{\lK(\calF(\frakX)J_\frakX)} \nu= \iota \otimes_{B} (\theta_1 + \theta_2)$ in $KK (J_\frakX, \frakA)$.
Also, it follows from the proof of the previous lemma that
$((X_1^\circ)\oplus (X_2^\circ) )^{-1}_* (x,-x) \in((X_1^\circ)\oplus (X_2^\circ) )^{-1}_* (\ker \xi) =\ker (\iota_* - [\frakX])$.
Since $\theta_1 \circ \Psi_1 \circ i_1 = \theta_2 \circ \Psi_2 \circ i_2$,
we have
\begin{align*}
\nu_* \circ (\iota_\Omega)_* \circ ((X_1^\circ)\oplus (X_2^\circ) )^{-1}_*(x, -x )
&=  (\theta_1 + \theta_2)_* \circ \iota_* \circ ((X_1^\circ)\oplus (X_2^\circ) )^{-1}_*(x, -x )\\
&=  (\theta_1 + \theta_2)_* \circ ( -[\frakX]) \circ ((X_1^\circ)\oplus (X_2^\circ) )^{-1}_*(x, -x )\\
&=  (\theta_1 + \theta_2)_* \circ (\Psi_{1} \circ i_1,\Psi_2 \circ i_2)_*  (-x, x )\\
&=  - (\theta_1 \circ \Psi_{1} \circ i_1)_* (x) + ( \theta_2 \circ \Psi_2 \circ i_2)_*  (x )\\
&=  0.
\end{align*}
Here the third equality follows from the commuting diagram (\ref{seq_X_k}).
Thus, we obtain the exact sequence for $KK^p(P,- )$.

For the exact sequence for $KK^p (- P)$,
it is enough to show that $\phi \oplus \rho_1$ is a $KK$-equivalnce.
Let $\eta$ be as in Lemma \ref{lem_xieta}.
Since $\rho_1 +\rho_2 = j_1\circ i_1$ holds in $KK(D, \calO(\frakX))$,
a simple diagram chasing shows that $\phi_* + \rho_{1*} \colon KK (\calO(\frakX), A \oplus D) \to KK(\calO(\frakX), \calO(\frakX))$
is surjective.
We show that $\phi +\rho_1$ is a $KK$-equivalence by the following trick from \cite{Pimsner_tree}:
Take $\gamma \in KK ( \calO(\frakX), A \oplus D)$ such that $ 1_{\calO(\frakX)} - (\alpha \oplus \delta)\otimes_{A \oplus D}(\phi \oplus\rho_1) =  \gamma \otimes_{A\oplus D}(\phi\oplus\rho_1)$.
Since the left hand side is an idempotent in the ring $KK(\calO(\frakX),\calO(\frakX))$,
it follows from Lemma \ref{lem_subequiv} that
\begin{align*}
\gamma \otimes_{A\oplus D}(\phi\oplus\rho_1)
&= \gamma \otimes_{A\oplus D}(\phi\oplus\rho_1) - \gamma \otimes_{A\oplus D}(\phi\oplus\rho_1) \otimes_{\calO(\frakX)}(\alpha \oplus \delta)\otimes_{A \oplus D}(\phi \oplus\rho_1) \\
&= \gamma \otimes_{A\oplus D}(\phi\oplus\rho_1)-\gamma \otimes_{A\oplus D}(\phi\oplus\rho_1)=0.
\end{align*}
\end{proof}

\begin{remark}\label{rem_connect}
Our proof shows that if $X_1^\circ$ and $X_2^\circ$ are full,
then the composition of the connecting map $\partial' \colon KK(P,A) \to KK^1(P,D)$,
the diagonal embedding $D \to D\oplus D$, and the $KK$-equivalence $(X_1^\circ\oplus X_2^\circ)^{-1}\otimes_{J_\frakX} \iota_\Omega \in KK(D\oplus D, \lK( \calF(\frakX)J_\frakX))$ is given by
the semisplit extension
\[
 0 \too \lK(\calF(\frakX)J_\frakX) \too \rmC^* (\Theta (A)) + \lK(\calF(\frakX)J_\frakX) \too A \too 0.
\]
In the original proof in \cite{Fima_Germain1},
it was shown that a natural embedding of the mapping cone $C_i$ of the diagonal embedding $D\to A_1 \oplus A_2$ into the suspension $SA$ has an inverse $x \in KK(SA,C_i)$.
Then, the connecting map is given by the Kasparov product of $x$ and the evaluation map $C_i \to D$.
It might be interesting to compare these two maps.
 \end{remark}

As a by-product, we obtain the following theorem:
\begin{theorem}
The element $\phi \oplus \rho_1 \colon KK(A\oplus D, \DT(A,E))$ is a $KK$-equivalence.
Therefore, the $KK$-class of $\DT(A,E)$ does not depend on the choice of conditional expectations $E_k, k=1,2$.
\end{theorem}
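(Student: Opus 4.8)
The plan is to obtain the $KK$-equivalence from ingredients already in hand and then to read off the independence statement. By Lemma~\ref{lem_subequiv} there are classes $\alpha\in KK(\DT(A,E),A)$ and $\delta\in KK(\DT(A,E),D)$ with $(\phi\oplus\rho_1)\otimes_{\DT(A,E)}(\alpha\oplus\delta)=\id_A\oplus\id_D$, so $\phi\oplus\rho_1$ is already split injective in $KK$-theory and the only thing left to prove is that the reverse composition $(\alpha\oplus\delta)\otimes_{A\oplus D}(\phi\oplus\rho_1)$ equals $\id_{\DT(A,E)}$. As in the proof of Theorem~\ref{thm_FG}, I would first reduce to the case that $X_1^\circ$ and $X_2^\circ$ are full: the $KK$-equivalences $A_k\hookrightarrow\calA_k$ of Lemma~\ref{lem_nonfull} induce, through the universal property of Corollary~\ref{cor_univ}, a $*$-homomorphism $\DT(A,E)\to\DT(\calA,F)$ sending $P_k$ to $P_k^{\calA}$ and hence intertwining $(\phi,\rho_1)$ with $(\phi^{\calA},\rho_1^{\calA})$; using the amalgamated free product picture of Corollary~\ref{cor_amalgam} together with Theorem~\ref{thm_FG} applied to $\calB_1\ast_{D\oplus D}\calB_2$, this map is itself a $KK$-equivalence, so it suffices to treat $(\calA,F)$.

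In the full case the plan is to run the argument from the last paragraph of the proof of Theorem~\ref{thm_FG}. The exact sequence of Lemma~\ref{lem_xieta}, combined with the relation $\rho_1+\rho_2=j_1\circ i_1$ in $KK(D,\DT(A,E))$, gives by a short diagram chase that $\phi_*+\rho_{1*}\colon KK(\DT(A,E),A\oplus D)\to KK(\DT(A,E),\DT(A,E))$ is surjective. Pimsner's idempotent trick from \cite{Pimsner_tree} then finishes the argument: choose $\gamma\in KK(\DT(A,E),A\oplus D)$ with $1_{\DT(A,E)}-(\alpha\oplus\delta)\otimes(\phi\oplus\rho_1)=\gamma\otimes(\phi\oplus\rho_1)$, use that $1_{\DT(A,E)}-(\alpha\oplus\delta)\otimes(\phi\oplus\rho_1)$ is an idempotent in $KK(\DT(A,E),\DT(A,E))$, and expand the product, invoking $(\phi\oplus\rho_1)\otimes(\alpha\oplus\delta)=\id_A\oplus\id_D$ from Lemma~\ref{lem_subequiv}, to conclude $\gamma\otimes(\phi\oplus\rho_1)=0$ and hence $(\alpha\oplus\delta)\otimes(\phi\oplus\rho_1)=\id_{\DT(A,E)}$. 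I expect this surjectivity input, together with the idempotent manipulation, to be the main point: Lemma~\ref{lem_subequiv} alone yields only a one-sided $KK$-inverse, and there is no formal passage from split-injectivity to $KK$-equivalence.

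For the final assertion I would argue as follows. The $KK$-equivalence just established gives $\DT(A,E)\sim_{KK}A\oplus D$, in which the summand $D$ carries no dependence on $E_1$ or $E_2$. By \cite{Hasegawa_IMRN,Fima_Germain2} the reduced amalgamated free product $A=(A_1,E_1)\ast_D(A_2,E_2)$ is $KK$-equivalent to the full amalgamated free product $A_1\ast_D A_2$, which involves neither $E_1$ nor $E_2$; composing the two equivalences shows that the $KK$-class of $\DT(A,E)$ is the same for every choice of the conditional expectations $E_1$ and $E_2$. (Throughout, the $\rmC^*$-algebras are taken separable and unital, as in Theorem~\ref{thm_FG}.)
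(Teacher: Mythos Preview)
Your argument is correct and follows the paper's approach: the one-sided inverse from Lemma~\ref{lem_subequiv}, surjectivity of $\phi_*+\rho_{1*}$, and Pimsner's idempotent trick are exactly the ingredients used in the last paragraph of the proof of Theorem~\ref{thm_FG}, and the independence statement is deduced identically from the $KK$-equivalence of $A$ with the full amalgamated free product.

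The only divergence is in how you handle the non-full case. The paper does \emph{not} reduce to the full situation; instead it observes that, once Theorem~\ref{thm_FG} is available for arbitrary (not only full) reduced amalgamated free products, one can apply it with $P=\DT(A,E)$ to \emph{both} $A=A_1\ast_DA_2$ and $\DT(A,E)=\calB_1\ast_{D\oplus D}\calB_2$ (via Corollary~\ref{cor_amalgam}) and read off the surjectivity of $\phi_*+\rho_{1*}$ directly, after which the idempotent trick concludes. Your route---passing to $(\calA,F)$ and showing $\DT(A,E)\to\DT(\calA,F)$ is a $KK$-equivalence via a five-lemma comparison of the Theorem~\ref{thm_FG} sequences for the two $\calB_1\ast_{D\oplus D}\calB_2$ decompositions---is valid but costs an extra naturality check for the boundary map $\partial'$ (not stated in the paper, though routine from its description in Remark~\ref{rem_connect}) as well as the verification that $\calB_k\hookrightarrow\calB_k^{\calA}$ is a $KK$-equivalence. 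The paper's direct argument is shorter precisely because it bypasses this naturality question.
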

\begin{proof}
When $X_1^\circ$ and $X_2^\circ$ are full,
the assertion follows from the last paragraph of the proof of Theorem \ref{thm_FG}.
In the general case,
one can check the surjectivity of $\phi_* +\rho_{1*}$
by applying Theorem \ref{thm_FG} to the reduced amalgamated free products $A$ and $\DT(A,E)$ by Corollary \ref{cor_amalgam}.
The second assertion follows from the $KK$-equivalence between $A$ and
the corresponding full amalgamated free product (see \cite{Hasegawa_IMRN,Fima_Germain1}).
\end{proof}

We close this paper by the following corollary about $K$-nuclearity introduced by Skandalis \cite{Skandalis2}.
Note that this corollary also follows from the original proof in \cite{Fima_Germain1}
since the mapping cone of the diagonal embedding $i\colon D \to A_1 \oplus A_2$
is a semisplit extension of $D$ by $SA_1\oplus SA_2$.

\begin{corollary}\label{cor_K_nuc}
Reduced amalgamated free products of $K$-nuclear $\rmC^*$-algebras are $K$-nuclear.
\end{corollary}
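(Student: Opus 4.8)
The plan is to transcribe the nuclearity bookkeeping of Corollary \ref{cor_app} into the $K$-nuclear setting, using the Cuntz--Pimsner picture of \S\ref{section_pimsner} together with the $KK$-equivalence $A\oplus D\simeq\DT(A,E)$ established just above. Throughout I would invoke the permanence properties of $K$-nuclearity from \cite{Skandalis2}: nuclear $\rmC^*$-algebras are $K$-nuclear; $K$-nuclearity is a $KK$-invariant; it passes to ranges of conditional expectations, to hereditary subalgebras, to quotients, and to ideals of semisplit extensions; it is stable under countable direct sums and under inductive limits; and the middle term of a semisplit extension with $K$-nuclear ends is $K$-nuclear. After a routine reduction to the separable case (each $A_k$ is an inductive limit of separable $K$-nuclear subalgebras, chosen compatibly with $D$ and the $E_k$, and reduced amalgamated free products commute with such limits), we may assume all algebras in sight are separable. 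Since $D$ is the range of the conditional expectation $E_1\colon A_1\to D$, it is $K$-nuclear as well.

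First I would show that the coefficient algebra $B=\bigoplus_{k\in\calI}B_k+\Psi(D)$ of $\frakX$ is $K$-nuclear. For each $k$ the Hilbert $D$-module $X_k$ is full (it contains $\xi_kD$), so $\lK(X_k)$ is Morita equivalent to $D$ and hence $K$-nuclear, and $\lK(X_k^\circ)$, being a hereditary subalgebra of $\lK(X_k)$ (as in the proof of Lemma \ref{lem_cbap}), is $K$-nuclear too. Then the semisplit extension $0\to\lK(X_k^\circ)\to B_k\to A_k\to0$ of \S\ref{ss_semisplit} has $K$-nuclear ends, so $B_k$ is $K$-nuclear. Finally, $B$ is a finite direct sum of the $B_k$ when $\calI$ is finite and otherwise a split extension of $D$ by the countable direct sum $\bigoplus_kB_k$; either way $B$ is $K$-nuclear. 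Next, since $\Pi\colon B\to\calT(\frakX)$ induces a $KK$-equivalence (Proposition \ref{prop_toeplitz}; see \cite{Pimsner_free,Katsura}), the Toeplitz--Pimsner algebra $\calT(\frakX)$ is $K$-nuclear, and since $\DT(A,E)\cong\calO(\frakX)$ is a quotient of $\calT(\frakX)$ --- via the semisplit extension of Theorem \ref{prop_semisplit} --- $\DT(A,E)$ is $K$-nuclear. By the theorem preceding this corollary, $\phi\oplus\rho_1\colon A\oplus D\to\DT(A,E)$ is a $KK$-equivalence, so $A\oplus D$ is $K$-nuclear, and therefore so is its direct summand $A$, which is the assertion. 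As an alternative, following Remark \ref{rem_connect}, one could instead use that the mapping cone $C_i$ of the diagonal embedding $i\colon D\to A_1\oplus A_2$ is a semisplit extension of $D$ by $SA_1\oplus SA_2$ and is $KK$-equivalent to $SA$ by \cite{Fima_Germain1}; then $C_i$ is $K$-nuclear, hence so are $SA$ and, by Bott periodicity, $A$.

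The main obstacle is not computational but conceptual bookkeeping: one must make sure that the precise permanence properties of $K$-nuclearity being used --- above all its $KK$-invariance and its behaviour under semisplit extensions, ideals, and quotients --- are available in the generality needed, and that the reduction to the separable case is legitimate for reduced amalgamated free products (which requires choosing the approximating separable subalgebras of $A_1$ and $A_2$ coherently over a common separable subalgebra of $D$ together with the conditional expectations). Once these points are granted, the argument is a direct transcription of the nuclearity proof of Corollary \ref{cor_app} combined with the $KK$-equivalence $\DT(A,E)\simeq A\oplus D$.
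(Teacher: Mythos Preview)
Your overall strategy mirrors the paper's proof closely—show $B$ is $K$-nuclear, pass to $\calT(\frakX)$ via the $KK$-equivalence $\Pi$, descend to $\calO(\frakX)\cong\DT(A,E)$ through the semisplit Toeplitz extension, and then pull $K$-nuclearity back to $A$. The gap is in the permanence properties you invoke. Skandalis's \cite{Skandalis2} gives $KK$-invariance and the two-out-of-three principle for \emph{semisplit} extensions (Proposition 3.8 there), but it does \emph{not} show that $K$-nuclearity passes to ranges of conditional expectations, to arbitrary hereditary subalgebras, or to quotients knowing only the middle term. Concretely: your deduction that $D$ is $K$-nuclear from $A_1$ via $E_1$ is unjustified (a conditional expectation is not a $KK$-morphism), and the paper simply \emph{assumes} $D$ is $K$-nuclear. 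More seriously, your claim that $\lK(X_k^\circ)$ is $K$-nuclear as a hereditary subalgebra of $\lK(X_k)$ fails in general: $\lK(X_k^\circ)$ is only Morita equivalent to the ideal $\ospan\langle X_k^\circ,X_k^\circ\rangle$ of $D$, and there is no reason an arbitrary ideal of a $K$-nuclear algebra is $K$-nuclear. Finally, to descend from $\calT(\frakX)$ to $\calO(\frakX)$ via Proposition 3.8 you must also know the kernel $\lK(\calF(\frakX)J_\frakX)$ is $K$-nuclear, which you do not verify.

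The paper resolves all three points with a single move you omit: Lemma \ref{lem_nonfull} replaces $(A_k,E_k)$ by $(\calA_k,F_k)=(A_k,\varphi_k)\ast(\calT,\omega)$ so that $\calX_k^\circ$ becomes \emph{full}, while $A_k\hookrightarrow\calA_k$ and $A\hookrightarrow\calA$ remain $KK$-equivalences. In the full case $\lK(X_k^\circ)$ is genuinely $KK$-equivalent to $D$, hence $K$-nuclear; then $J_\frakX=\lK(X_1^\circ)\oplus\lK(X_2^\circ)$ and $\lK(\calF(\frakX)J_\frakX)$ (via the $KK$-equivalence $\iota_\Omega$) are $K$-nuclear as well, and both applications of the two-out-of-three principle go through. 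For the final step the paper uses only the right-invertibility of $\phi$ from Lemma \ref{lem_subequiv}: since $\DT(A,E)$ is $K$-nuclear, $\alpha$ has a representative with nuclear left action, and precomposing with the $*$-homomorphism $\phi$ gives a nuclear representative of $\phi\otimes\alpha=1_A$. Your appeal to the full $KK$-equivalence $A\oplus D\simeq\DT(A,E)$ is a legitimate alternative once $KK$-invariance of $K$-nuclearity is granted, but note that the proof of that theorem already uses Lemma \ref{lem_nonfull}, so you are not avoiding the full-case reduction either way. The separable reduction you sketch is unnecessary here: the entire section works under a standing separability hypothesis.
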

\begin{proof}
Assume that $A_1, A_2$ and $D$ are $K$-nuclear.
We may assume that $X_1^\circ$ and $X_2^\circ$ are full by Lemma \ref{lem_nonfull}.
Then, $\lK(X_k^\circ)$ and $D$ are $KK$-equivalent,
and thus $\lK(X_k^\circ), J_\frakX$ and $\lK(\calF(\frakX)J_\frakX)$ are $K$-nuclear.
It follows from \cite[Proposition 3.8]{Skandalis2} that $B_k$ has the same property.
Since $\Pi$ induces a $KK$-equivalence, $\calT(\frakX)$ is $K$-nuclear,
and thus so is $\DT(A,E) \cong \calO(\frakX)$ by \cite[Proposition 3.8]{Skandalis2}.
Therefore, $\phi \otimes_{\DT(A,E)} \alpha$ is implemented by some nuclear Kasparov bimodule,
and hence the $K$-nuclearity of $A$ follows from Lemma \ref{lem_subequiv}.
\end{proof}

\end{document}